\numberwithin{equation}{section}
\definecolor{violet}{rgb}{0.580,0.,0.827}
\newcommand\dD{\mathrm{d}}
\newcommand{\beq}{\begin{equation}}
\newcommand{\eeq}{\end{equation}}
\newcommand{\beqa}{\begin{eqnarray}}
\newcommand{\eeqa}{\end{eqnarray}}
\newcommand\br{\begin{remark}}
\newcommand\er{\end{remark}}
\newcommand\bp{\begin{pmatrix}}
\newcommand\ep{\end{pmatrix}}
\newcommand{\be}{\begin{equation}}
\newcommand{\ee}{\end{equation}}
\newcommand\ba{\begin{equation}\begin{aligned}}
\newcommand\ea{\end{aligned}\end{equation}}
\newcommand\ds{\displaystyle}
\newcommand{\beg}{\begin{example}}
\newcommand{\eeg}{\end{exaplem}}
\newcommand{\bpr}{\begin{proposition}}
\newcommand{\epr}{\end{proposition}}
\newcommand{\bt}{\begin{theorem}}
\newcommand{\et}{\end{theorem}}
\newcommand{\bc}{\begin{corollary}}
\newcommand{\ec}{\end{corollary}}
\newcommand{\bl}{\begin{lemma}}
\newcommand{\el}{\end{lemma}}
\newcommand{\bd}{\begin{definition}}
\newcommand{\ed}{\end{definition}}
\newcommand{\brs}{\begin{remarks}}
\newcommand{\ers}{\end{remarks}}
\newcommand{\T}{{\mathbb T}}
\newcommand\cA{{\mathcal A}}
\newcommand\cB{{\mathcal B}}
\newcommand\cC{{\mathcal C}}
\newcommand\cD{{\mathcal D}}
\newcommand\cH{{\mathcal H}}
\newcommand\cK{{\mathcal K}}
\newcommand\cL{{\mathcal L}}
\newcommand\cR{{\mathcal R}}
\newcommand\cS{{\mathcal S}}
\newcommand\scC{{\mathscr C}}
\newtheorem{theorem}{Theorem}[section]
\newtheorem{proposition}[theorem]{Proposition}
\newtheorem{corollary}[theorem]{Corollary}
\newtheorem{definition}[theorem]{Definition}
\newtheorem{remark}[theorem]{Remark}
\newtheorem{lemma}[theorem]{Lemma}
\newcommand{\R}{{\mathbb{R}}}
\newcommand{\N}{{\mathbb{N}}}
\newcommand{\Z}{{\mathbb{Z}}}
\newcommand{\eps}{\varepsilon}
\newcommand{\udiv}{\mbox{div}}
\title[Longtime and chaos in microscopic systems with singular interactions]{Longtime and chaotic dynamics in microscopic systems with singular interactions} 
\author[A. B\'ejar-L\'opez]{Alexis B\'ejar-L\'opez}
\address[Alexis B\'ejar-L\'opez]{\newline Departamento de Matem\'atica Aplicada and Research Unit ``Modeling Nature'' (MNat), Facultad de Ciencias, Universidad de Granada, 18071 Granada, Spain}
\email{alexisbejar@ugr.ess}
\author[A. Blaustein]{Alain Blaustein}
\address[Alain Blaustein]{\newline Dept. of Mathematics, Huck institutes, Pennsylvania State University, University Park, PA 16803, USA} \email{akb7016@psu.edu}
\author[P. E. Jabin]{Pierre-Emmanuel Jabin}
\address[Pierre-Emmanuel Jabin]{\newline Dept. of Mathematics, Huck institutes, and Excellence Research Unit ``Modeling Nature'', Pennsylvania State University, University Park, PA 16803, USA} \email{pejabin@psu.edu}
\author[J. Soler]{Juan Soler}
\address[Juan Soler]{\newline Departamento de Matem\'atica Aplicada and Research Unit ``Modeling Nature'' (MNat), Facultad de Ciencias, Universidad de Granada, 18071 Granada, Spain}
\email{jsoler@ugr.es}
\begin{document}

\keywords{Many-body problem, Longtime behavior, Singular kernels, Mean-field limit, Vlasov-type
	PDEs, Euler equations, Patlak-Keller-Segel equations, Aggregation phenomena  }

\subjclass[2010]{
	Primary: 82C22, 
	Secondary: 
	70F45, 
	60F17,  
	60H10  
	76R99 
}

\thanks{\textbf{Acknowledgment.}  This work has been partially supported by the grants: DMS Grant  1908739, 2049020, 2205694 and 2219397 by the NSF (USA);  by the State Research Agency of the Spanish Ministry of Science and FEDER-EU, project PID2022-137228OB-I00 (MICIU/AEI /10.13039/501100011033); by Modeling Nature Research Unit, Grant QUAL21-011 funded by Consejería de Universidad, Investigaci\'on e Innovaci\'on (Junta de Andalucía); and by the Spanish Ministry of Science, Innovation and Universities FPU research grant FPU19/01702 (A. B-L)}

	\begin{abstract}
		This paper investigates the long time dynamics of interacting particle systems subject to singular interactions. We consider a microscopic system of $N$ interacting point particles, where the time evolution of the joint distribution $f_N(t)$ is governed by the Liouville equation. Our primary objective is to analyze the system's behavior over extended time intervals, focusing on the stability, the potential chaotic dynamics and the impact of singularities. In particular, we aim to derive reduced models in the regime $N \gg 1$, exploring both the mean-field approximation and configurations far from chaos, where the mean-field approximation no longer holds. These reduced models do not always emerge  but in these cases we prove that it is possible to derive uniform bounds in $ L^2 $, both over time and with respect to the number of particles, on the marginals $ \left(f_{k,N}\right)_{1\leq k \leq N}$,  irrespective of the initial state’s chaotic nature. Furthermore, we extend previous results by considering a wide range of singular interaction kernels $ K \in W^{\frac{-2}{d+2}, d+2} $ in dimension $d\geq 2$, surpassing the traditional $ L^d $ regularity barriers.  Finally, we address the highly singular case of $ K \in H^{-1} $ under a threshold temperature regime, offering new insights into the behavior of such systems.
	\end{abstract}
 
	\maketitle

	\tableofcontents
	\section{Introduction}
	\label{sec:1}
	\setcounter{equation}{0}
	\setcounter{figure}{0}
	\setcounter{table}{0}
The aim of this paper is to study the long time dynamics of interacting particles systems. Specifically, we consider a microscopic system consisting of $N$ point particles, interacting through singular interactions, 
\begin{equation}\label{System}
\left\{
\begin{array}{ll}
    &\ds\dD X_i=\frac{1}{N} \sum_{\substack{j=1\\ i\neq j}}^N K\left(X_i-X_j\right)\dD t+\sqrt{2\sigma}\,\dD W_i \\[2.5em]
&\ds X_i(t=0)=X_i^0
\end{array}\right.\;\;,\quad\quad \forall i\in\{1,\cdots, N\}\,.
\end{equation}
	We analyze the behavior of these systems over long time intervals, exploring their stability, and the impact of singularities on overall system behavior. Each particle is described through its location  $X_i$ on $\mathbb{T}^d$,  where $\mathbb{T}$ denotes the $1$-dimensional torus of length $|\T|$, and where $d\geq2$ is the dimension. Particle displacement is influenced by two factors: interactions with other particles, modeled by an interaction kernel $ K: \mathbb{T}^d \rightarrow \mathbb{T}^d $, and diffusion with intensity $ \sigma > 0 $, represented by a collection of independent standard Wiener processes $ (W_i)_{1 \leq i \leq N} $ over $ \mathbb{T}^d $. 
	
	We assume the particles are initially indistinguishable, that is:
	\begin{equation}\label{hyp:f0}
	f^0_N\left(x_1,\dots,x_N\right)\,=\,f^0_N\left(x_{\gamma(1)},\dots,x_{\gamma(N)}\right)\,,\quad\quad \forall (x_1,\dots,x_N)\in\mathbb{T}^{dN}\,,
	\end{equation}
	for all permutation of indices $\gamma$, and
	where $f^0_N\,=\,\cL\left(X_1^0,\dots,X_N^0\right)$ is the joint law of the particles at initial time. Property \eqref{hyp:f0} holds for  the joint law of particles $f_N(t,x_1,\dots,x_N)$ at all times $t\geq0$. The time evolution of this joint distribution $f_N(t)$ is governed by the Liouville or forward Kolmogorov equation:
	\begin{equation}\label{Liouville}
		\partial_t f_N+\frac{1}{N} \sum_{\substack{i,j=1\\ i\neq j}}^N  \udiv_{x_i}\left(K\left(x_i-x_j\right) f_N\right)=\sigma \sum_{i=1}^N \Delta_{x_i}f_N\,.
	\end{equation}
The interacting particle system \eqref{System} has broad applicability across a range of disciplines for both types of interactions: first-order interactions, which are the focus of this paper, and second-order (Newtonian) interactions.  Understanding the behavior of such systems is crucial for advancing the study of complex, large-scale systems with many interacting components. For instance, it can model vortex interactions in a two-dimensional fluid \cite{CagliotiLionsMarchioroPulvirenti1995, FournierHaurayMischler2014}, collective motion in biological systems such as microorganisms \cite{GodinhovQuininao2015, FournierJourdain}, interactions of cytonemes in cell communication \cite{aguirre2022predictive}, and more generally, aggregation phenomena \cite{BolleyGuillinFlorent2010, CarrilloDiFrancescoFigalli2011, DegondFrouvelleLiu2013, BreschJabinWang2019a, BreschJabinWang2023}. Additio\-nally, it finds applications in opinion dynamics in populations \cite{Krause2000, Xia2011, Pouradier2019}, optimization problems \cite{Pinnau2017, Carrillo2021, GrassiPareschi2021}, plasma (Coulomb) and astrophysical (Newtonian) interactions \cite{BreschJabinSoler2023, Serfaty2020}, and even in the training of neural networks \cite{Nguyen2018, Rotskoff_Vanden22}. In the applications we just mentioned, the interaction kernels are typically singular, of the order $ |x|^{-\alpha} $, where, for example, $ \alpha = d-1 $ in the cases of plasma or gravitation, as well as in some cases of cell attraction processes (Keller-Segel). Additionally, $ 0 < \alpha < 1 $ with finite range applies in the case of interactions between cytonemes in cellular communication.

In most of these applications, the number $N$ of interacting particles in \eqref{System} is extremely large. For example, in physical plasmas, $N$ can be on the order of $10^{23}$, while in neural interactions, it may reach around $10^8$. For this reason, it is of great interest to identify reduced models for \eqref{System} in the regime $N\gg 1$. A classical approach consists in  proving propagation of chaos which means that the particles' positions become independent as $N\rightarrow +\infty$, provided  that they are so initially. This can be seen on the marginals: for each fixed $k\geq 1$, the marginal $f_{k,N}$ of $f_N$ defined as:
\begin{equation}\label{def:marg}
	f_{k, N}\left(t, x_1, \ldots, x_k\right)= 
	\int_{\mathbb{T}^{d(N-k)} } f_N\left(t, x_1, \ldots, x_N \right) \dD x_{k+1} \ldots \dD x_N,
\end{equation}
converges weakly towards a chaotic, or tensorized, distribution as $N\rightarrow +\infty$
\begin{equation}\label{DefinitionPropagationOfChaos}
	f_{k}(t,X^k)\underset{N\rightarrow +\infty}{\longrightarrow}\bar{f}^{\otimes k}(t,X^k):= \prod_{i=1}^k\bar{f}(t,x_i) \,,
\end{equation}
where, for simplicity, $f_k$ denotes $f_{k,N}$ and $X^k=\left(x_1,\dots,x_k\right)\in\mathbb{T}^{dk}$. 

Propagation of chaos~\eqref{DefinitionPropagationOfChaos} is expected to follow from the mean-field scaling $1/N$ of the interaction term in \eqref{System}. Indeed, when $N\gg 1$, the exact field $\frac{1}{N}\,\sum_{i=1}^N K(X_i-X_j)$ is expected to approach an average or mean field that particles generate as a whole. Hence, the dynamics of the mean-field limit distribution $\bar{f}$ in \eqref{DefinitionPropagationOfChaos} are driven by the following  non linear equation:
\begin{equation}\label{VlasovFokkerPlanck}
	\partial_t\bar{f}+\udiv_x\left(\left(K\star \bar{f}\right)\bar{f}\,\right)\,=\,\sigma\,\Delta_x \bar{f}\,,
\end{equation}
where the mean-field generated by particles is now computed thanks to the convolution product $\star$ between $K$ and the limiting density $\bar{f}$ over $\mathbb{T}^d$. 
In the '$50s$, Kac \cite{Kac1956} pioneered the mathematical study of \textit{propagation of chaos}, which involves proving \eqref{DefinitionPropagationOfChaos} for time $ t > 0 $ given that it holds at the initial time. For a comprehensive introduction to this subject and related developments, we refer the reader to \cite{Hauray2014,Jabin2014,Golse2016}.

We now introduce the literature relevant to our study and refer to \cite{ChaintronDiez2022a, ChaintronDiez2022b} for more comprehensive and exhaustive reviews of recent advances in the field. For regular interaction kernels, such as $ K \in W^{1,\infty} $, the mathematical analysis of the mean-field regime is well established \cite{McKean1967, Neunzert1974, Dobrushin1979, Sznitman1991, Spohn2004}. However, as previously noted, in many applications, the kernel $ K $ lacks such regularity. For first order systems, there has been significant progress on the mean-field limit with singular interaction kernels, at least provided the singularity in $K$ is at the origin. 

The convergence of the 2d point vortices to the incompressible Euler had already been established in  
\cite{CotGooHou, Goodman91, GooHouLow90, HouLowShe93} for deterministic initial positions and in \cite{Scho95, Scho96} for random initial positions. In the stochastic case, the Navier-Stokes equations and propagation of chaos were famously derived as early as \cite{Osada}, with a smallness condition that was removed in \cite{FHM-JEMS}. The mean-field limit for general vortices approximation without $K$ being anti-symmetric was also obtained in~\cite{Hau09}.

One important set of recent results for 1st order systems with singular interactions resolves around the so-called \textit{modulated energy} method,  that leverages the physical properties of the system. This allows to obtain the convergence of solutions to \eqref{System} to the mean-field limit for Riesz and Coulomb kernels, see~\cite{Duerinckx2016} in dimensions 1 and 2, and the seminal~\cite{Serfaty2020} for higher dimension, both without diffusion ($ \sigma = 0 $). 
A relative entropy method was also developed in~\cite{JabinWang2018} and provided quantitative propagation of chaos  with $ W^{-1,\infty} $ kernels in the stochastic case. This result applies to the Biot-Savart kernel on the torus and the 2D vortex model. \cite{BreschJabinWang2019a, BreschJabinWang2020, BreschJabinWang2023} extended these entropy estimates by incorporating the \textit{modulated energy} method to establish quantitative propagation of chaos for kernels with a large smooth part, a small attractive singular part and a large repulsive singular part, as seen in the Patlak-Keller-Segel system under subcritical regimes.

The question of extending these results to uniform-in-time estimates has garnered significant interest due to its wide range of applications, from classical physics problems to emerging developments in machine learning \cite{Andrieu2003, Rotskoff_Vanden22, Qin_Li_Yunan23}. \cite{Guillin2024} advanced this field by deriving uniform-in-time propagation of chaos for divergence-free kernels $ K \in W^{-1,\infty} $, refining the arguments initially presented in \cite{JabinWang2018}. Uniform-in-time convergence to the mean-field limit for Riesz-type interactions was addressed in \cite{RosenzweigSerfaty2021}, and \cite{CourcelRosenzweigSerfaty2023}, building on~\cite{NguyenRosenzweifSerfaty2022}. The issue of uniform-in-time propagation of chaos for attractive logarithmic-type kernels on the torus has been recently addressed in \cite{CourcelRosenzweigSerfaty2023log}. The paper establishes the existence of a critical value $\sigma$ such that, for $\sigma$ exceeding this threshold, a uniform-in-time rate of propagation of chaos is achieved, provided the initial data are sufficiently close to being chaotic. Furthermore, the authors provide a counterexample demonstrating that this result can not be extended to cases with sufficiently small diffusion, emphasizing the necessity of the critical $\sigma$, which will be discussed in Remark \ref{RemarkSizeSigma}.

Recently, a new set of approaches has been developed by taking advantage of diffusion to prove estimates directly on the marginals. This also naturally lead to stronger notions of propagation of chaos where the convergence in \eqref{DefinitionPropagationOfChaos} is established in Lebesgue spaces. \cite{Lacker} first derived relative entropy estimates on the marginal, showing improved rates of convergence to the mean-field limit with the kernel $K$ in the Orlicz space $exp$.  \cite{LackerLeFlem2023} achieved uniform-in-time propagation of chaos for $ L^\infty_{loc} $ kernels with sharp rates as $ N \to +\infty $.  \cite{Han2023} subsequently extended these results to $ L^p $ kernels, with $ p > d $, under a divergence-free constraint. \cite{ChildsRowan2024} even managed to derive global-in-time clustering expansions for the dynamics when $K\in L^\infty$ recovering the seminal results in~\cite{Duer} with more singular kernels but adding diffusion. Unfortunately, it appears difficult to extend those methods to cases where diffusion is degenerate. \cite{BreschJabinSoler2023} still showed propagation of chaos in weighted $ L^p $ spaces for second-order systems with singular interaction kernels, though only on short time intervals.

To conclude this section, we stress that, as seen in the references above, the behavior of particle systems \eqref{System} is much better understood when the initial configurations are close to chaos, meaning that \eqref{DefinitionPropagationOfChaos} holds at $ t = 0 $. This is largely because, in such cases, the dynamics are effectively governed by the mean-field limit. However, the behavior of interacting particle systems that start far from chaos remains an open question and represents a significant challenge in this field.

In this article, we focus on the long-time behavior of the particle system \eqref{System} across a broad range of configurations, including both the mean-field regime \eqref{DefinitionPropagationOfChaos} as well as configurations far from chaos, where the mean-field approximation \eqref{DefinitionPropagationOfChaos}-\eqref{VlasovFokkerPlanck} no longer applies. Specifically, we show that the marginals $ \left(f_{k,N}\right)_{1\leq k \leq N} $ remain uniformly bounded in $ L^2 $, both in time and  with respect to the number of particles, regardless of whether the mean-field approximation \eqref{DefinitionPropagationOfChaos}-\eqref{VlasovFokkerPlanck} is valid. In such situations, these uniform estimates prove to be critical, as we offer a counter example where they remain valid even though uniform in time propagation of chaos fails (see Proposition \ref{prop:counterex}).

Moreover, we consider a broad class of singular interactions, allowing for kernels $ K \in W^{\frac{-2}{d+2}, d+2} $ with negative regularity, thereby extending beyond the $ L^d $ regularity barrier frequently encountered in the literature \cite{Hauray09,Hauray_Jabin15,Han2023} (see a detailed discussion following Theorem \ref{th:1}). In addition, we address the highly singular case where $ K \in H^{-1} $ in the high-temperature regime. A central element of our approach involves establishing a sharp Sobolev inequality on $ \mathbb{T}^{dN} $ for $ N \gg 1 $.

As a result of our findings, we improve the uniform-in-time propagation of chaos from $ L^1 $ to stronger $ L^p $ convergence, employing a straightforward interpolation argument. Specifically, for divergence-free kernels, we extend the global-in-time propagation of chaos in $ L^1 $ established in \cite{JabinWang2018} to uniform-in-time propagation of chaos in $ L^p $, for any $ 1 \leq p < 2 $. This enhancement broadens the scope of the original results, providing stronger control over the convergence properties of the system.

The article is organized as follows. In Section \ref{sec:2}, we present our two main results, which establish uniform-in-time propagation of the $L^2$ norms of the marginals in two distinct scenarios. Theorem \ref{th:1} addresses the case where $ K \in W^{\frac{-2}{d+2},\,d+2}(\mathbb{T}^d) $, while Theorem \ref{th:2} examines more singular kernels in the high-temperature regime, specifically $ K \in H^{-1}(\mathbb{T}^d) $ for sufficiently large $\sigma > 0$. From these results, we derive uniform-in-time propagation of chaos in $L^p$ for any $1 \leq p < 2$ in Corollary \ref{cr:1}. Section \ref{sec:3} is dedicated to the proofs of Theorems \ref{th:1} and \ref{th:2}, while Corollary \ref{cr:1} is established in Section \ref{sec:4}. In Section \ref{Sec:counter:ex}, we prove Proposition \ref{prop:counterex}, demonstrating the failure of uniform-in-time propagation of chaos even in settings with highly regular kernels. The article concludes with two appendices essential for proving Theorems \ref{th:1} and \ref{th:2}. Appendix \ref{Proof:th:sob:ineq} provides a sharp constant for Sobolev's inequality on the torus, while Appendix \ref{App:Interp} examines the spaces that result from interpolating between $ W^{1,\infty}(\mathbb{T}^d) $ and $ L^2(\mathbb{T}^d) $, along with their dual spaces. In Appendix \ref{App:WP}, we rigorously justify the formal computations carried out on Theorem \ref{th:1} and \ref{th:2}.

	\section{Main results}
	\label{sec:2}
	\setcounter{equation}{0}
	\setcounter{figure}{0}
	\setcounter{table}{0}

Let us begin this section with our assumptions on the kernel $K$. We present two different sets of assumptions: first we  consider highly singular kernels $K\in H^{-1}(\mathbb{T}^d)$ that are divergence-free, meaning: 
	\begin{equation}\label{hyp:K:sing}
		K\,=\,\udiv_{x} \phi\quad\textrm{and}\quad
  \udiv_x(K)\,=\,0\,,
	\end{equation}
    for some matrix field $\phi : \mathbb{T}^d \rightarrow \mathbb{T}^{d\times d}$ in $L^2\left(\mathbb{T}^d\right)$. The $H^{-1}$-norm of $K$ is then defined as:
    \[
    \left\|
    K
    \right\|_{H^{-1}\left(\mathbb{T}^d\right)}
    \,:=\,
    \inf_{\phi}\,
    \left\|
    \phi
    \right\|_{L^{2}\left(\mathbb{T}^d\right)},
    \]
    where the infimum is taken over all $\phi$ that satisfy the first relation in \eqref{hyp:K:sing}.
    
    We also consider singular kernels that are not divergence free, where we assume that:
	\begin{equation}\label{K:1}
		\ds K\in W^{-\theta, \frac{2}{\theta}}\left(\mathbb{T}^d\right)\,,\quad \textrm{with}\quad \theta = \frac{2}{d+2}\,,
	\end{equation}
	where the Sobolev space $W^{-\theta, \frac{2}{\theta}}\left(\mathbb{T}^d\right)$ includes all the vector fields $K$ for which exists a matrix potential $\phi$ such that 
	\begin{equation}\label{hyp:K:div}
		K\,=\,\udiv_{x} \phi\,,
	\end{equation}
	with $\phi \in W^{1-\theta, \frac{2}{\theta}}\left(\mathbb{T}^d\right)$, meaning that:
	\[
	\left\|
	\phi
	\right\|_{L^{\frac{2}{\theta}}\left(\mathbb{T}^d\right)}\,+\,
	\left(
	\int_{\mathbb{T}^{2d}}
	\frac{|\phi(y)-\phi(z)|^{\frac{2}{\theta}}}{|y-z|^{d+\frac{2(1-\theta)}{\theta}}}\,\dD y\,\dD z
	\right)^{\frac{\theta}{2}}<+\infty\,.
	\]
	Hence, $W^{-\theta, \frac{2}{\theta}}\left(\mathbb{T}^d\right)$ defines a Banach space under the norm
	\[
	\left\|
	K
	\right\|_{W^{-\theta, \frac{2}{\theta}}\left(\mathbb{T}^d\right)}
	\,:=\,
    \inf_{\phi}\,
    \left[
    \left\|
	\phi
	\right\|_{L^{\frac{2}{\theta}}\left(\mathbb{T}^d\right)}\,+\,
	\left(
	\int_{\mathbb{T}^{2d}}
	\frac{|\phi(y)-\phi(z)|^{\frac{2}{\theta}}}{|y-z|^{d+\frac{2(1-\theta)}{\theta}}}\,\dD y\,\dD z
	\right)^{\frac{\theta}{2}}
 \right],
	\]
    where the infimum is taken over all $\phi$ which satisfy \eqref{hyp:K:div}. 
    
 The second constraint on $ K $ outlines the nature of the interactions we consider. We differentiate between attractive and repulsive interactions through the following decomposition of $ K $:
 \begin{equation}\label{K:2}
K(x)\,=\,K_-(x)+K_+(x)\,,\;\quad\forall\, x \in \mathbb{T}^{d}\,,
 \end{equation}
where $ K_- $ accounts for repulsive interactions, while $ K_+ $ represents attractive interactions. We impose the following assumptions on $ K_+ $ and $ K_- $:
 \begin{subequations}
		\begin{numcases}{}			\label{K:-}
			\left(\udiv_{x} K_-\right)_-\,\in\,L^{\infty}\left(\mathbb{T}^d\right)\;,\\[0.8em]
			\label{K:+}
K_+\,\in\,L^{q}\left(\mathbb{T}^d\right)\;,
		\end{numcases}
	\end{subequations}
 for some $q\geq d$ with $q>2$, where $\left(\cdot\right)_-$ denotes the non-positive part of a real number. Our main results focus on the uniform in time propagation of the $ L^2 $ norms of marginals for various configurations, both near and far from chaos. In the following theorem, we address the case where $ K \in W^{\frac{-2}{d+2},\,d+2}\left(\mathbb{T}^d\right) $.
 \begin{theorem}\label{th:1}
  Assume that the interaction kernel $K$ satisfies \eqref{K:1}-\eqref{K:+}, that we have the exchangeability condition \eqref{hyp:f0} on the sequence of initial data $ \left(f^0_N\right)_{N\geq1} $, along with the following super-exponential growth constraint on the marginals \eqref{def:marg}: there exists a constant $ \beta > 0 $ such that 
 \begin{equation}\label{hyp:f0:growth}
\sup_{2\leq N}\sup_{ k\leq  N}
\frac{\left\|f^0_{k,N}\right\|_{L^2\left(\mathbb{T}^{d k}\right)}
}{k^{\beta k}}
\,<\,+\infty\,.
 \end{equation}
 Then, there exists a unique sequence of weak solutions $\left(f_N\right)_{N\geq 1}$ to \eqref{Liouville} in the sense of \eqref{Liouville:weak}  with initial data $\left(f^0_N\right)_{N\geq1}$ and such that for all $N\geq 1$ it holds
 \[f_N \in L_{loc}^{\infty}\left(\R_+,L^2\left(\T^{dN}\right)\right)\cap L^{2}_{loc}\left(\R_+,H^1(\T^{dN})\right).\]
Furthermore, the sequence of solution has uniformly bounded marginals $\left(f_{k,N}\right)_{1\leq k\leq N}$. Specifically, the following result
 \begin{equation}\label{MarginalsBound}
\sup_{t\in \R^+} 
\sup_{1\leq N}\sup_{ k\leq  N}
\frac{\left\|f_{k,N}(t,\cdot)\right\|_{L^2\left(\mathbb{T}^{d k}\right)}
}{k^{\alpha k}}
\,\leq\,C\,,\quad \textrm{for all} \quad \alpha>\max\left(\beta,d/4\right)\,,
 \end{equation}
holds, for some constant $C$ depending on $d$, $|\mathbb{T}|$, $K$, $\sigma$, $\alpha$ and the implicit constant in \eqref{hyp:f0:growth}. 
 \end{theorem}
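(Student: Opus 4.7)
The strategy is to derive a coupled hierarchy of $L^2$ energy estimates for the marginals $f_{k,N}$ and to close it via a weighted-sum argument. Integrating \eqref{Liouville} over $x_{k+1},\ldots,x_N$ produces the BBGKY-like equation for $f_{k,N}$, and multiplying by $f_{k,N}$ and integrating by parts yields
$$\frac{1}{2}\frac{d}{dt}\|f_{k,N}\|_{L^2(\mathbb{T}^{dk})}^2 + \sigma\,\|\nabla f_{k,N}\|_{L^2(\mathbb{T}^{dk})}^2 \;=\; A_k + B_k,$$
where $A_k$ collects the self-interactions among the first $k$ particles and $B_k$ the coupling with the $(k+1)$-st marginal. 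Setting $u_k := \|f_{k,N}\|_{L^2}$ and $v_k := \|\nabla f_{k,N}\|_{L^2}$, the goal is to show that $(u_k,v_k)$ satisfies a hierarchy of differential inequalities that can be closed by a well-chosen weighted sum.

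For the internal term $A_k$ I would use the decomposition $K=K_-+K_+$. For the $K_-$ part an integration by parts produces $\propto \int \operatorname{div} K_-(x_i-x_j)\,f_{k,N}^2$, bounded below by $-\|(\operatorname{div} K_-)_-\|_{L^\infty}\|f_{k,N}\|_{L^2}^2$ by \eqref{K:-}, so together with the combinatorial prefactor $k(k-1)/N$ this contributes $\lesssim (k^2/N)\,u_k^2$. For the $K_+$ part I would integrate by parts to put the gradient on $f_{k,N}$, apply H\"older in one coordinate $x_i$ with exponents $(q,\,2q/(q-2),\,2)$, and invoke the sharp Sobolev embedding $H^1(\mathbb{T}^d)\hookrightarrow L^{2d/(d-2)}(\mathbb{T}^d)$ from Appendix~\ref{Proof:th:sob:ineq} on that one coordinate; Cauchy--Schwarz in the remaining variables then gives $|A_k^+| \lesssim (k^2/N)\,(u_k v_k + v_k^2)$.

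For the coupling $B_k$ I would exploit \eqref{K:1} via the representation $K=\operatorname{div}\phi$ with $\phi \in W^{1-\theta,2/\theta}(\mathbb{T}^d) \subset L^2(\mathbb{T}^d)$. An integration by parts in $y$ converts $\int K(x_i-y) f_{k+1,N}\,dy$ into $\int \phi(x_i-y)\nabla_y f_{k+1,N}\,dy$; a fiberwise Cauchy--Schwarz in $y$, using the translation invariance $\|\phi(x_i-\cdot)\|_{L^2(dy)} = \|\phi\|_{L^2}$, followed by integration in $X^k$, yields $|B_k| \lesssim \|\phi\|_{L^2}\,v_k v_{k+1}$ (up to the harmless factor $(N-k)/N\le 1$). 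Collecting the pieces and using Young's inequality to trade $u_k v_k$ and $v_k v_{k+1}$ against $v_k^2$ and $v_{k+1}^2$, I arrive at
$$\frac{d}{dt}u_k^2 + \widetilde\sigma_k v_k^2 \;\leq\; a(k,N)\,u_k^2 + b\,v_{k+1}^2, \qquad 1\le k\le N,$$
with $a(k,N) \lesssim k^2/N + k^3/N^2$, $b \lesssim \|\phi\|_{L^2}^2/\sigma$, and the convention $v_{N+1}^2:=0$.

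To close this hierarchy uniformly in $t$ and $N$, I would introduce $\Phi(t) := \sum_{k=1}^N W_k\,u_k(t)^2$ with super-exponentially decaying weights $W_k \sim k^{-2\alpha k}$. Summing the inequalities and using the telescoping identity $\sum_k W_k v_{k+1}^2 = \sum_{k\ge 2} W_{k-1} v_k^2$ absorbs the cross-coupling into the diffusion as long as $W_k\widetilde\sigma_k \ge W_{k-1}b$, and Poincar\'e's inequality $v_k^2 \ge (2\pi/|\mathbb{T}|)^2(u_k^2 - |\mathbb{T}|^{-dk})$ on $\mathbb{T}^{dk}$—whose constant is independent of $k$—provides coercivity on each level, producing $\dot\Phi \le -\mu\Phi + C$. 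A Gronwall argument then yields $\Phi(t) \le \max(\Phi(0),C/\mu)$; hypothesis \eqref{hyp:f0:growth} is precisely what guarantees $\Phi(0) <\infty$ when $\alpha>\beta$, and the pointwise inequality $W_k u_k^2 \le \Phi$ converts the bound on $\Phi$ into the desired marginal estimate $\|f_{k,N}(t)\|_{L^2} \le C\,k^{\alpha k}$. The delicate point will be calibrating the decay rate of $W_k$ against the $k$-dependence of $a(k,N)$, $\widetilde\sigma_k$, and of the sharp Sobolev constant so that both the recursion $W_k/W_{k-1} \ge b/\widetilde\sigma_k$ and the Poincar\'e-driven coercivity close simultaneously for every $k\le N$; this calibration is what forces the secondary threshold $\alpha > d/4$. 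Turning the formal energy identities into rigorous computations requires approximating $K$ by smooth kernels and passing to the limit in a Galerkin scheme, which is the content of Appendix~\ref{App:WP} and also furnishes the announced regularity $f_N \in L^\infty_{\mathrm{loc}}(\mathbb{R}_+,L^2) \cap L^2_{\mathrm{loc}}(\mathbb{R}_+,H^1)$ and uniqueness of the weak solutions.
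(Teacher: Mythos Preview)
Your proposal has a genuine gap in the treatment of the coupling term $B_k$, and this gap is precisely what separates Theorem~\ref{th:1} from Theorem~\ref{th:2}.

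Your estimate $|B_k|\lesssim \|\phi\|_{L^2}\,v_k\,v_{k+1}$ uses only that $\phi\in L^2(\mathbb{T}^d)$, i.e.\ only the $H^{-1}$ regularity of $K$; the stronger hypothesis $\phi\in W^{1-\theta,2/\theta}$ with $\theta=2/(d+2)$ is discarded. After Young this produces $b\,v_{k+1}^2$ with $b\sim\|\phi\|_{L^2}^2/\sigma$, and your telescoping condition becomes $W_k\,\widetilde\sigma_k\ge b\,W_{k-1}$. With $W_k\sim k^{-2\alpha k}$ one has $W_{k-1}/W_k\sim k^{2\alpha}e^{2\alpha}$, so you would need $\sigma\gtrsim b\,k^{2\alpha}$ for every $k$, which is impossible for fixed $\sigma$. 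This is exactly why the $H^{-1}$ argument (Section~\ref{sec:K:H-1}) requires the high-temperature constraint $\sigma\ge\sigma_0$ and only closes with \emph{exponential} weights $R^{-2k}$, not super-exponential ones.

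The paper's mechanism for removing the constraint on $\sigma$ is Lemma~\ref{Lemma:hierarchy:term}: one interpolates the operator $T(\psi)=\int(\operatorname{div}\psi)(x_i-x_{k+1})f_{k+1}\,dx_{k+1}$ between $W^{1,\infty}\to L^2$ (bound $\|f_k\|_{L^2}$) and $L^2\to L^2$ (bound $\|\nabla_{x_{k+1}}f_{k+1}\|_{L^2}$), obtaining a bound proportional to $\|f_k\|_{L^2}^{1-\theta}\|\nabla_{x_{k+1}}f_{k+1}\|_{L^2}^{\theta}$. The factor $\|f_k\|_{L^2}^{1-\theta}$ is then converted back into a power of $\|\nabla_{X^k}f_k\|_{L^2}$ via the sharp Sobolev inequality of Theorem~\ref{Sob:ineq:Pi} applied on the \emph{full} space $\mathbb{T}^{dk}$ (not on a single coordinate $\mathbb{T}^d$ as you do). The outcome is that the coupling term is homogeneous in $f_k$ only up to $O(1/k^2)$, so after the appropriate three-exponent Young inequality the $v_{k+1}^2$ term appears with a coefficient $\eta_k\sim k^{-\alpha}$ that \emph{decays} in $k$; this is what makes the weighted sum close for any $\sigma>0$ and forces the threshold $\alpha>d/4$. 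Your sketch invokes the sharp Sobolev constant only in the lower-order term $A_k^+$, where it is not decisive, and misses its essential role in $B_k$.
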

  
The uniform estimate \eqref{MarginalsBound} constitutes the cornerstone of Theorem \ref{th:1}. To prove \eqref{MarginalsBound}, we demonstrate an optimal Sobolev inequality on $\mathbb{T}^{dN}$ as $N$ approaches infinity. We leverage this inequality to control the interactions between particles along by the dissipation induced by diffusion on the right-hand side of the Liouville equation \eqref{Liouville}. We postpone the justification of the existence and uniqueness result to Appendix \ref{App:WP} as it not our primary focus.

The primary contribution of Theorem \ref{th:1} is the establishment of uniform estimates concerning both time and the number of particles in strong Lebesgue norms, specifically $L^2$. Moreover, Theorem \ref{th:1} is valid in configurations that are significantly removed from chaos, in addition to the standard chaotic scenarios. Indeed, the super-exponential growth constraint outlined in Assumption \eqref{hyp:f0:growth} extends beyond tensorized or chaotic initial data $f^0_{k,N} = (f^0)^{\otimes k}$, which exhibit at most exponential growth:
\begin{equation*}
	\sup_{2\leq N}\sup_{ k\leq  N}
	\frac{\left\|f^0_{k,N}\right\|_{L^2\left(\mathbb{T}^{d k}\right)}
	}{R^{k}}
	\,<\,+\infty\,,\quad\textrm{for some }\quad R>0\,.
\end{equation*}
Furthermore, Theorem \ref{th:1} applies to a broad class of singular kernels $K \in W^{\frac{-2}{d+2},\,d+2}$ with negative derivatives . Consequently, it encompasses all kernels $K$ in $L^p$ for $p > d$, which are commonly referenced in the literature \cite{Hauray_Jabin15, Han2023, Hauray09}. It is important to note that the case $K \in L^d$ is not directly included in our framework, as the Sobolev embedding of $L^d$ into $W^{\frac{-2}{d+2},\,d+2}$ is not valid. Nonetheless, a minor adjustment in our proof would permit the inclusion of kernels $K$ of the following form:
\[
K\in W^{\frac{-2}{d+2},\,d+2}\left(\mathbb{T}^d\right)\,+\, L^{d}\left(\mathbb{T}^d\right)\,.
\]
For the sake of simplicity, we do not pursue this avenue, but we provide additional details in Remark \ref{case:K:Ld}. 

 We also point out that, under the assumption of Theorem \ref{th:1}, the uniform control over the marginals \eqref{MarginalsBound} is the "best" one can hope for, since uniform in time propagation of chaos fails in general. We support our claim with a detailed counterexample formalized in Proposition \ref{prop:counterex} below.

 In the following result, we extend Theorem \ref{th:1} to highly singular kernels $K$ in $H^{-1}$ within the high-temperature regime, specifically for sufficiently large values of $\sigma$ in \eqref{Liouville}.
\begin{theorem}\label{th:2}
Assume~\eqref{hyp:K:sing} on the kernel $ K $,~ \eqref{hyp:f0} on the initial data $ (f_N^0)_{N \geq 1} $, and the following exponential growth constraint on the marginals \eqref{def:marg} of the initial data:
\begin{equation}\label{SmallnessH-1}
\sup_{2\leq N}\sum_{ k= 1}^N
\frac{\left\|f^0_{k,N}\right\|_{L^2\left(\mathbb{T}^{d k}\right)}^2
}{R^{2k}}
\,<\,C^2\,,
 \end{equation}
 for some positive constants $R$ and $C$. Then, there exists a unique sequence of weak solutions $\left(f_N\right)_{N\geq 1}$ to \eqref{Liouville} in the sense of \eqref{Liouville:weak}  with initial data $\left(f^0_N\right)_{N\geq1}$ and such that for all $N\geq 1$ it holds
 \[f_N \in L_{loc}^{\infty}\left(\R_+,L^2\left(\T^{dN}\right)\right)\cap L^{2}_{loc}\left(\R_+,H^1(\T^{dN})\right).\]
 Furthermore, there exists a constant $ \sigma_0 $ such that, for all diffusion coefficients $ \sigma \geq \sigma_0 $, the solutions $ (f_N)_{N \geq 1} $ exhibits uniformly bounded marginals in $ L^2 $. More specifically, it holds
  \begin{equation}\label{MarginalsBoundH-1}
\sup_{t\in \R^+} 
\sup_{1\leq N}\sup_{ k\leq  N}
\frac{\left\|f_{k,N}(t,\cdot)\right\|_{L^2\left(\mathbb{T}^{d k}\right)}
}{R^{k}}
\,\leq\,C\,.
 \end{equation}
Furthermore, $\sigma_0$ can be explicitly chosen as
\begin{equation}\label{ConditionSigmaH-1}
	 \sigma_0\,=\,R\,\|K\|_{H^{-1}\left(\mathbb{T}^d\right)}. 
\end{equation}
\end{theorem}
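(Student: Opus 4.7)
The goal is to show the weighted quantity $S(t) := \sum_{k=1}^N R^{-2k}\|f_{k,N}(t,\cdot)\|_{L^2(\T^{dk})}^2$ is non-increasing in $t$ whenever $\sigma \geq R\|K\|_{H^{-1}(\T^d)}$; by \eqref{SmallnessH-1}, this immediately yields $\|f_{k,N}(t)\|_{L^2} \leq C R^k$, i.e.\ \eqref{MarginalsBoundH-1}. Existence and uniqueness of weak solutions can be handled by standard Galerkin/mollification arguments and are deferred to Appendix \ref{App:WP}; the heart of the matter is the a priori bound.

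First I derive the BBGKY-type equation for $f_{k,N}$ by integrating \eqref{Liouville} in $x_{k+1},\ldots,x_N$ and invoking exchangeability. Multiplying by $f_{k,N}$ and integrating over $\T^{dk}$, the internal interaction terms (indices $i,j \leq k$) vanish because $K$ is divergence-free: indeed $\int K(x_i-x_j)\cdot \nabla_{x_i}f_{k,N}^2 = -\int \udiv_{x_i}K\,f_{k,N}^2 = 0$. Writing $D_k^2 := \sum_{i=1}^k \|\nabla_{x_i}f_{k,N}\|_{L^2}^2$, what remains is
\begin{equation*}
\tfrac{1}{2}\tfrac{d}{dt}\|f_{k,N}\|_{L^2}^2 + \sigma D_k^2 \,=\, \tfrac{N-k}{N}\sum_{i=1}^k\int_{\T^{d(k+1)}} K(x_i-x_{k+1})\,f_{k+1,N}\,\cdot\, \nabla_{x_i}f_{k,N}\,\dD x_1\cdots \dD x_{k+1}.
\end{equation*}
The central difficulty is to bound this right-hand side using only first-order derivative norms of $f_{k,N}$ and $f_{k+1,N}$: naively substituting $K = \udiv_x\phi$ and integrating by parts in $x_i$ would move the derivative onto $f_{k+1,N}$ but produce a Hessian of $f_{k,N}$, which the diffusion does not control. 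The trick is that $K(x_i-x_{k+1}) = -\udiv_{x_{k+1}}\phi(x_i-x_{k+1})$, so I integrate by parts in $x_{k+1}$ instead; since $f_{k,N}$ does not depend on $x_{k+1}$, the derivative lands cleanly on $f_{k+1,N}$, yielding
\begin{equation*}
\int K\,f_{k+1,N}\,\cdot\, \nabla_{x_i}f_{k,N}\,\dD x \,=\, \int \phi(x_i-x_{k+1})\,:\,\nabla_{x_{k+1}}f_{k+1,N}\otimes \nabla_{x_i}f_{k,N}\,\dD x.
\end{equation*}
Two successive Cauchy-Schwarz inequalities (first in $x_{k+1}$, using translation invariance of $\|\phi\|_{L^2(\T^d)}$, then in $x_1,\ldots,x_k$), combined with exchangeability ($\|\nabla_{x_i}f_{k,N}\|_{L^2} = D_k/\sqrt{k}$ and similarly for $f_{k+1,N}$) and the infimum over admissible $\phi$, deliver the clean estimate $|\text{RHS}| \leq \|K\|_{H^{-1}(\T^d)}\,D_k\,D_{k+1}$.

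Young's inequality $2\|K\|D_kD_{k+1}\leq \sigma D_k^2 + \|K\|^2 D_{k+1}^2/\sigma$ then gives $\tfrac{d}{dt}\|f_{k,N}\|_{L^2}^2 + \sigma D_k^2 \leq \tfrac{\|K\|_{H^{-1}}^2}{\sigma}D_{k+1}^2$ for $1\leq k<N$, with the top marginal satisfying $\frac{d}{dt}\|f_N\|_{L^2}^2 + 2\sigma D_N^2 = 0$. Multiplying by $R^{-2k}$, summing in $k$, and reindexing $k\mapsto k+1$ on the right absorbs exactly one factor of $R^2$, producing
\begin{equation*}
\tfrac{d}{dt}S(t) + \left(\sigma - \tfrac{R^2\|K\|_{H^{-1}}^2}{\sigma}\right)\sum_{k=1}^N R^{-2k}D_k^2 \,\leq\, 0,
\end{equation*}
which is non-positive precisely when $\sigma\geq R\|K\|_{H^{-1}}$, the threshold \eqref{ConditionSigmaH-1}. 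The main obstacle is closing the coupling hierarchy at the level of first-order derivatives: it is the interplay between divergence-free cancellation of self-interactions, the $-\udiv_{x_{k+1}}\phi$ representation that keeps the gradient on $f_{k+1,N}$ while preserving $\nabla_{x_i}f_{k,N}$, and the geometric weights $R^{-2k}$ tuned to the $k\mapsto k+1$ coupling, that makes the sum telescope under exactly the stated threshold.
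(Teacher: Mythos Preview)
Your proposal is correct and follows essentially the same approach as the paper: vanishing of the internal interaction terms via the divergence-free assumption, the key integration by parts in $x_{k+1}$ using $K(x_i-x_{k+1})=-\udiv_{x_{k+1}}\phi(x_i-x_{k+1})$ to land the derivative on $f_{k+1,N}$, exchangeability to balance the $\sqrt{k}$ factors, Young's inequality, and the $R^{-2k}$-weighted summation that telescopes precisely at the threshold $\sigma\geq R\|K\|_{H^{-1}}$. The paper organizes the Cauchy--Schwarz steps in a slightly different order (first isolating the inner convolution integral, then substituting $\phi$), but the estimate $\mathcal{B}\leq \|K\|_{H^{-1}}\|\nabla_{X^k}f_k\|_{L^2}\|\nabla_{X^{k+1}}f_{k+1}\|_{L^2}$ and the subsequent closure are identical to yours.
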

The core of our proof is to derive \eqref{MarginalsBoundH-1}. To do so, we control the interaction term on the left-hand side of \eqref{Liouville} using the dissipation from the diffusion operator on the right-hand side of \eqref{Liouville} for sufficiently large coefficient $\sigma$. We postpone the proof of the existence and uniqueness result to Appendix \ref{App:WP} as it not our primary focus.

In Theorem \ref{th:2}, we derive uniform estimates in both time and the number of particles in the strong $L^2$-norm of marginals, under the class of kernels specified by Assumption \eqref{hyp:K:sing}. This result enables us to handle more singular kernels than those considered in Theorem \ref{th:1}. For example, the Biot-Savart and Coulomb kernels in dimension two fall within the scope of that theorem.

Additionally, we note that Theorems \ref{th:1} and \ref{th:2} imply a mean-field limit result similar to those in \cite{Duerinckx2016,Serfaty2020,BreschJabinSoler2023}, without requiring regularity of the solution to the limiting equation \eqref{VlasovFokkerPlanck}. However, here we focus on a stronger result, specifically, strong propagation of chaos (see Corollary \ref{cr:1} below).

An interesting consequence of our results is that we can strengthen uniform-in-time $L^1$ propagation of chaos into stronger $L^p$ convergence using a simple interpolation argument in the case where $K$ is divergence free. For instance, in Corollary \ref{cr:1}, we show uniform--in- -time propagation of chaos as a direct consequence of the uniform estimates established in Theorems \ref{th:1} and \ref{th:2}, combined with the global-in-time propagation of chaos (see \cite{JabinWang2018}, for example). We obtain explicit decay rates in both $N$ and $t \geq 0$, ensuring that the marginal $f_{k,N}$ converges to the tensorized limit $\bar{f}^{\otimes k}$, as defined in \eqref{DefinitionPropagationOfChaos}, as $t \to +\infty$ and $N \to +\infty$ simultaneously.

Moreover, we prove that \eqref{DefinitionPropagationOfChaos} holds in the strong $L^p$-topology for $1 \leq p < 2$, provided that \eqref{DefinitionPropagationOfChaos} is initially satisfied in the weaker entropic sense. This result highlights the robustness of our approach in controlling the chaotic behavior of the system under more stringent conditions, when adding  to \eqref{hyp:f0:growth} or \eqref{SmallnessH-1} the weak entropy assumption:
\begin{equation}\label{hyp:chaos}
\sup_{N\geq 1}{\left[N\, 	\mathcal{H}_N\left(f^0_N|(\bar{f}^0)^{\otimes N}\right)\right]}\,<\,\infty\,,
\end{equation}
for some initial distribution $\bar{f}^0$, where the relative entropy $\cH_N$ is defined  as follows:
\begin{equation*}
	\mathcal{H}_N(f|g)=\frac{1}{N}\int_{\mathbb{T}^{dN} 
	} f(X^N)\log{\left(\frac{f(X^N)}{g(X^N)}\right)}\dD X^N\,,
\end{equation*}
for any two positive functions $(f,g)\in L^1(\mathbb{T}^{dN})$ with integral one.

\begin{corollary}[Uniform propagation of chaos in $L^p$]\label{cr:1} Consider an interaction kernel $K\in W^{-1,\infty}(\mathbb{T}^d)$ and a sequence of initial data $\left(f^0_N\right)_{N\geq1}$ satisfying \eqref{hyp:chaos}. Furthermore,
	assume  either the assumptions of Theorem \ref{th:1}  with  $\udiv_x (K)=0$ \underline{or}
	the assumptions of Theorem \ref{th:2}, and consider a solution $\bar{f}$ to equation \eqref{VlasovFokkerPlanck} with some initial data $\bar{f}^0$ satisfying:
	\begin{equation}\label{f0Regularity}
		\bar{f}^0\in C^{\infty}(\mathbb{T}^d)\,,\quad
		\inf_{x \in \mathbb{T}^d} \bar{f}^0(x) > 0 \,,\quad\textrm{and}\quad
		\int_{\mathbb{T}^d}\bar{f}^0(x)\,\dD x \,=\,1\,.
	\end{equation}
	Then, each finite marginal $f_{k,N}$ converges to $\bar{f}^{\otimes k}$ uniformly in time in $L^p$, for all $1\leq p<2 $, as $N\rightarrow +\infty$. More precisely, for all $(k,N)\in\N^2$, with $1\leq k\leq N$, and all time $t\geq 0$, the marginals $(f_{k,N})_{1\leq k\leq N}$ satisfy:
	\begin{equation*}
		\|f_{k,N}(t,\cdot)-\bar{f}^{\otimes k}(t,\cdot)\|_{L^{p}(\mathbb{T}^{dk})}\leq X_k^\frac{2(p-1)}{p}\left(\frac{C\sqrt{k}e^{-\beta t}}{N^\gamma}\right)^{\frac{2-p}{p}},
	\end{equation*}
	for some positive constants $C, \beta, \gamma$ which only depend on $K$, $d$ , $\sigma$, the implicit constant in \eqref{MarginalsBound} (resp. \eqref{MarginalsBoundH-1}) and \eqref{hyp:chaos}, and the norms of $\bar{f}_0$. Furthermore,  $X_k\,=\,C k^{\alpha k}+\|\bar{f}^0\|_{L^2(\mathbb{T}^{dk})}^k$, under the assumptions of Theorem \ref{th:1}, and $
	X_k\,=\,C R^k+\|\bar{f}^0\|_{L^2(\mathbb{T}^{dk})}^{k}$, under the assumptions of Theorem \ref{th:2}.
\end{corollary}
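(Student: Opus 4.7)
The strategy is a direct interpolation between $L^1$ and $L^2$ using the standard bound
\[
\|u\|_{L^p(\mathbb{T}^{dk})} \,\leq\, \|u\|_{L^1(\mathbb{T}^{dk})}^{(2-p)/p}\,\|u\|_{L^2(\mathbb{T}^{dk})}^{2(p-1)/p},
\]
applied to $u = f_{k,N}(t,\cdot)-\bar{f}^{\otimes k}(t,\cdot)$ for $1\leq p<2$. The exponents arising here are exactly the ones stated in the corollary, so it suffices to control each factor separately: the $L^2$ factor yields a term of size $X_k$, while the $L^1$ factor provides the decay in $N$ and $t$. The bulk of the work is therefore to identify uniform-in-time controls on each of these two norms and then to combine them.

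For the $L^2$ factor, I would use the triangle inequality to write $\|f_{k,N}-\bar f^{\otimes k}\|_{L^2}\leq \|f_{k,N}\|_{L^2}+\|\bar f\|_{L^2(\mathbb{T}^d)}^k$. Under the hypotheses of Theorem \ref{th:1}, the first term is controlled uniformly in time by $Ck^{\alpha k}$ thanks to \eqref{MarginalsBound}, and under the hypotheses of Theorem \ref{th:2} it is controlled by $CR^k$ thanks to \eqref{MarginalsBoundH-1}; the second term is controlled by propagating the smoothness of $\bar f^0$ given in \eqref{f0Regularity} along the nonlinear equation \eqref{VlasovFokkerPlanck}, which is standard parabolic bootstrap under the regularity of $K$ assumed here (and whose $L^2$ norm remains uniformly bounded in time). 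For the $L^1$ factor, I would invoke the uniform-in-time relative entropy propagation of chaos available in this $W^{-1,\infty}$, divergence-free setting, as established in \cite{JabinWang2018} and sharpened in \cite{Guillin2024}: this provides a bound
\[
\mathcal H_N\!\left(f_N(t)\,|\,\bar f^{\otimes N}(t)\right)\,\leq\, \frac{C\,e^{-2\beta t}}{N^{2\gamma}},
\]
valid under \eqref{hyp:chaos}, for some $\beta,\gamma>0$. Combining subadditivity of the relative entropy, which gives $\mathcal H_k(f_{k,N}\,|\,\bar f^{\otimes k})\leq \mathcal H_N(f_N\,|\,\bar f^{\otimes N})$, with the Csisz\'ar--Kullback--Pinsker inequality, yields
\[
\|f_{k,N}(t,\cdot)-\bar f^{\otimes k}(t,\cdot)\|_{L^1(\mathbb{T}^{dk})}\,\leq\,\sqrt{2k\,\mathcal H_k(f_{k,N}|\bar f^{\otimes k})}\,\leq\,\frac{C\sqrt k\,e^{-\beta t}}{N^\gamma}.
\]
Inserting the two controls in the interpolation inequality gives exactly the announced bound.

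The main obstacle is identifying a single uniform-in-time relative-entropy estimate that simultaneously accommodates both regimes, since Theorem \ref{th:1} allows general (not divergence-free) singular kernels while the entropy results of \cite{JabinWang2018,Guillin2024} require $\udiv_x K=0$: this is why the statement restricts to $\udiv_x K=0$ in the first case, and to the high-temperature regime in the second case, where $\sigma\geq\sigma_0$ with $\sigma_0$ given in \eqref{ConditionSigmaH-1} guarantees that the entropy method of \cite{JabinWang2018} can be closed. A secondary technical point is verifying that under \eqref{f0Regularity} one actually propagates enough regularity on $\bar f$ for the limiting equation \eqref{VlasovFokkerPlanck}, and ensuring that both the initial entropy assumption \eqref{hyp:chaos} and the $L^2$ assumption \eqref{hyp:f0:growth} (resp.\ \eqref{SmallnessH-1}) are compatible; the former follows from standard maximal regularity for the nonlinear Fokker--Planck equation, while the latter reduces to the straightforward observation that tensorized data satisfy both. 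Once these ingredients are in place, the combination with the interpolation argument is immediate.
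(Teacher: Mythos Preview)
Your interpolation framework and the treatment of the $L^2$ factor are essentially right, but there is a genuine gap in the $L^1$ part. You assert that \cite{JabinWang2018} and \cite{Guillin2024} together yield a single relative-entropy bound of the form
\[
\mathcal{H}_N\!\left(f_N(t)\,\big|\,\bar f^{\otimes N}(t)\right)\;\leq\;\frac{C\,e^{-2\beta t}}{N^{2\gamma}}.
\]
No such estimate appears in either reference. What \cite{JabinWang2018} actually gives is a bound that \emph{grows} exponentially in time, of order $e^{C_1 t}/N$; the refinement in \cite{Guillin2024} makes the constant uniform in $t$ (order $C/N$), but it does not produce exponential \emph{decay} in $t$. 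With only those inputs, the best $L^1$ control you can extract is $C\sqrt{k}/\sqrt{N}$ uniformly in $t$, which does not match the $e^{-\beta t}$ factor in the corollary.

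The missing ingredient is a second, independent $L^1$ estimate coming from convergence to equilibrium: under the divergence-free assumption, both $f_{k,N}(t)$ and $\bar f^{\otimes k}(t)$ relax exponentially (via a log-Sobolev inequality and Csisz\'ar--Kullback--Pinsker) to the \emph{same} uniform state $f_\infty^{\otimes k}$, so $\|f_{k,N}(t)-\bar f^{\otimes k}(t)\|_{L^1}\leq C\sqrt{k}\,e^{-C_2 t}$ with $C_2$ depending only on $\sigma$ and $|\mathbb T|$. This is Lemma~\ref{EntropyEstimatesLemma} in the paper. One then combines the growing-in-time bound $C\sqrt{k}\,e^{C_1 t}/\sqrt{N}$ from \cite{JabinWang2018} with this decaying-in-time bound by taking their minimum and optimizing: for any $0<\gamma<C_2/(C_1+C_2)$ one checks that $\min\{e^{C_1 t}/\sqrt{N},\,e^{-C_2 t}\}\leq e^{-\beta t}/N^{\gamma}$ for a suitable $\beta>0$. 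This trade-off is the step that manufactures simultaneous decay in $t$ and $N$, and it is absent from your argument. Your comment that the condition $\sigma\geq\sigma_0$ in Theorem~\ref{th:2} is what ``guarantees that the entropy method of \cite{JabinWang2018} can be closed'' is also off: that threshold is needed only for the uniform $L^2$ marginal bounds, while the entropy side relies solely on $K\in W^{-1,\infty}$ and $\mathrm{div}_x K=0$.
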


The key idea in our proof is that for divergence-free kernels $K$, standard relative entropy estimates guarantee that the marginal $f_{k,N}$ and the tensorized limit $\bar{f}^{\otimes k}$ converge to the same stationary state as $t \to +\infty$. When combined with the global-in-time propagation of chaos result from \cite[Theorem 1]{JabinWang2018}, this ensures that $f_{k,N} \to \bar{f}^{\otimes k}$ as $N \to +\infty$ and $t \to +\infty$ simultaneously in $L^1$.
We then use an interpolation argument to strengthen this $L^1$-convergence. By interpolating between the $L^1$-convergence and our uniform $L^2$-estimates from Theorems \ref{th:1} and \ref{th:2}, we upgrade the convergence from $L^1$ to $L^p$ for all $1 \leq p < 2$. This interpolation approach allows us to control the chaotic behavior of the system in stronger norms, reinforcing the convergence properties.

\begin{remark}
    The assumptions regarding the regularity of the initial data $\bar{f}_0$ could be relaxed, but doing so would introduce additional technical complications that do not contribute significantly to the main objectives of this paper. For the sake of clarity and focus, we will therefore retain the regularity assumptions in \eqref{f0Regularity}.
\end{remark}

To conclude this section, we expand on the cases where uniform in time propagation of chaos \eqref{DefinitionPropagationOfChaos} fails and where the particle system \eqref{System} is not governed by its mean field limit anymore after enough time elapsed. This shows that, under the assumption of Theorem \ref{th:1}, uniform control over the marginals is optimal, in the sense that one cannot expect uniform in time propagation of chaos. In the following proposition, we formalize a counter example which meets the assumption of Theorem \ref{th:1} and where uniform in time propagation of chaos fails.
\begin{proposition}\label{prop:counterex}
Fix the dimension to $d=1$ and suppose that $K$ is given by the Kuramoto kernel:
\[
K(x)\,=\,-\sin(x)\,,\quad\forall\, x \in \T\,.
\]
For $\sigma>0$ small enough, there exists a probability distribution $\bar{f}^0\in \scC^2\left(\T\right)$ such that the solution $\bar{f}(t)$ to \eqref{VlasovFokkerPlanck} with initial condition $\bar{f}^0$ and the solutions $(f_N(t))_{N\geq 2}$ to \eqref{Liouville} with the following chaotic initial configurations:
\[
f^0_N\,=\,
\left(\bar{f}^{0}\right)^{\otimes N}
\] 
do not satisfy uniform in time propagation of chaos in the following sense:
\begin{equation*}
\limsup_{|(t,N)|\rightarrow +\infty}
	\|f_{1,N}(t,\cdot)-\bar{f}(t,\cdot)\|_{L^1(\mathbb{T})}>0\,.
\end{equation*}
\end{proposition}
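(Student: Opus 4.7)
The plan is to exploit a standard spontaneous symmetry-breaking phenomenon for the Kuramoto model. For small $\sigma$, the mean-field equation \eqref{VlasovFokkerPlanck} admits a non-uniform stationary solution $\bar{f}^\infty$, while the $N$-particle invariant measure---being invariant under the diagonal rotation of $\T$---has a one-particle marginal that is \emph{exactly} uniform, for every $N$. Initializing the mean-field equation at this non-uniform stationary state will force $\bar{f}(t) \equiv \bar{f}^\infty$ for all $t$, whereas the particle marginal $f_{1,N}(t)$ eventually relaxes to the uniform distribution. This mismatch produces an irreducible $L^1$ gap along well-chosen sequences $(t_n, N_n) \to \infty$.

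\textbf{Step 1: construction of $\bar{f}^\infty$.} Write $K = -V'$ with $V(x) = -\cos(x)$. Any smooth stationary density of \eqref{VlasovFokkerPlanck} with zero flux on the torus satisfies $\sigma\,\partial_x \log \bar{f}^\infty = K \star \bar{f}^\infty$. Restricting to densities even about the origin, the sine component of the convolution vanishes and one obtains the von Mises one-parameter family
\[
\bar{f}^\infty_m(x) \,=\, Z_m^{-1} \exp\!\left(m\,\cos(x)/\sigma\right),
\]
together with the self-consistency condition $m = I_1(m/\sigma)/I_0(m/\sigma)$, where $I_0, I_1$ are the modified Bessel functions. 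Linearizing the right-hand side at $m = 0$ yields slope $1/(2\sigma)$, so a standard bifurcation argument produces a non-trivial root $m > 0$ as soon as $\sigma < 1/2$. The resulting $\bar{f}^\infty$ is smooth and strictly positive, and $\delta_0 := \|\bar{f}^\infty - 1/|\T|\,\|_{L^1(\T)} > 0$. Set $\bar{f}^0 := \bar{f}^\infty$, so that by uniqueness for \eqref{VlasovFokkerPlanck} we have $\bar{f}(t) \equiv \bar{f}^\infty$ for all $t \geq 0$.

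\textbf{Step 2: analysis of the $N$-particle invariant measure.} Since $K = -\nabla V$, the Liouville equation \eqref{Liouville} is the Fokker-Planck equation of the gradient Langevin dynamics on $\T^N$ with smooth potential $U_N(x_1,\ldots,x_N) = -\tfrac{1}{2N}\sum_{i,j} \cos(x_i - x_j)$. The unique invariant measure is the Gibbs measure $f_N^\infty \propto \exp(-U_N/\sigma)$, and classical spectral-gap results on the compact torus with smooth potential yield, for every fixed $N$, exponential ergodicity: $\|f_N(t) - f_N^\infty\|_{L^1(\T^N)} \to 0$ as $t \to +\infty$ (the rate may degrade in $N$, which is harmless for what follows, since the chaotic initial datum $(\bar{f}^0)^{\otimes N}$ has finite relative entropy with respect to $f_N^\infty$). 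Crucially, $U_N$ depends only on the differences $x_i - x_j$, hence is invariant under $x_i \mapsto x_i + \alpha$; a change of variables in the marginalization integral then shows that $f_{1,N}^\infty$ is translation-invariant on $\T$, and therefore $f_{1,N}^\infty \equiv 1/|\T|$.

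\textbf{Step 3: conclusion.} Pick any $N_n \to \infty$. By Step 2, choose $t_n \geq n$ large enough (depending on $N_n$) so that $\|f_{1,N_n}(t_n) - 1/|\T|\,\|_{L^1(\T)} < \delta_0/2$. Then, since $\bar{f}(t_n) = \bar{f}^\infty$, the triangle inequality gives
\[
\|f_{1,N_n}(t_n) - \bar{f}(t_n)\|_{L^1(\T)} \,\geq\, \|\bar{f}^\infty - 1/|\T|\,\|_{L^1(\T)} - \|f_{1,N_n}(t_n) - 1/|\T|\,\|_{L^1(\T)} \,\geq\, \delta_0/2,
\]
which proves the announced $\limsup > 0$. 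The main obstacle is Step~1: producing a non-uniform stationary state for small $\sigma$ via the self-consistency condition is classical for Kuramoto-type models, but it is the only place where small $\sigma$ enters in a non-trivial way, so the quantitative threshold $\sigma_c = 1/2$ must be handled carefully. The ergodicity estimate in Step~2 is only needed qualitatively in $N$ and reduces to a standard spectral-gap statement on the compact manifold $\T^N$, which is routine.
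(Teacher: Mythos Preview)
Your proof is correct and follows essentially the same route as the paper: exploit the existence of a non-uniform stationary state $\bar{f}^\infty$ of the mean-field equation for small $\sigma$, initialize there, and contrast with the fact that the first marginal of the $N$-particle Gibbs measure is exactly uniform by translation invariance, concluding via a triangle inequality along a diagonal sequence. The only cosmetic differences are that you construct $\bar{f}^\infty$ explicitly via the von Mises self-consistency relation (the paper cites a reference), and you phrase the $N$-particle ergodicity as a spectral-gap statement while the paper uses a log-Sobolev inequality --- both serve the same qualitative purpose here.
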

To prove this result, we use that, with the Kuramoto kernel, there exist two distinct stationary states to the limiting equation \eqref{VlasovFokkerPlanck}  for $\sigma>0$ small enough \cite[Theorem $4.1$]{Ha_Shim_Zhang20} whereas the Liouville equation admits a unique stable equilibrium. Stability is obtained thanks to a logarithmic Sobolev inequality demonstrated in \cite[Lemma $2$]{Guillin2024}. We postpone the proof to Section \ref{Sec:counter:ex}.
	\section{Uniform \texorpdfstring{$L^2$}{L2}-estimates}
	\label{sec:3}
	\setcounter{equation}{0}
	\setcounter{figure}{0}
	\setcounter{table}{0}
	In this section, we establish uniform-in-time and uniform-in-number-of-particles $L^2$ -estimates for the marginals $(f_{k,N})_{1 \leq k \leq N}$. Our proof relies on the analysis of the BBGKY hierarchy satisfied by the marginals, which is derived by integrating \eqref{Liouville} with respect to $(x_{k+1}, \dots, x_N)$ and leveraging the exchangeability condition \eqref{hyp:f0}:
	\begin{equation}\label{BBGKY}
		\begin{split}
			\partial_t f_{k,N}&+\frac{1}{N}\sum_{\substack{i,j=1\\ i\neq j}}^k \udiv_{x_i}( K(x_i-x_j) f_{k,N})\\
			&+ \frac{N-k}{N}\sum_{i=1}^k \udiv_{x_i}\left(\int_{\mathbb{T}^d} K(x_i-x_{k+1})f_{k+1,N}\dD x_{k+1} \right)=\sigma \sum_{i=1}^k\Delta_{x_i}f_{k,N}\,,
		\end{split}
	\end{equation}
	for all $N\geq1$ and $k\in\{1,\dots,N\}$, with $f_{N+1,N}=0$. This approach allows us to handle the complexity of particle interactions systematically and derive the desired estimates. The main challenge is estimating the terms in \eqref{BBGKY} that involve the higher-order marginal $ f_{k+1,N} $:
	\begin{equation}\label{hierarchy:term}
		\int_{\mathbb{T}^{d}} K(x_i-x_{k+1})f_{k+1,N}(t,x_1,\dots,x_{k+1})\,\dD x_{k+1}\,,\quad i\in\left\{1,\cdots,k\right\}\,.
	\end{equation}
For a chaotic marginal $f_{k+1}$, that is $f_{k+1}=F^{\otimes (k+1)}$, a naive estimate of the interaction term in \eqref{hierarchy:term} would yield: 
\begin{equation}\label{naive:estim}
	\left\|
	\int_{\mathbb{T}^d} K(x_i-x_{k+1})f_{k+1}(t,x_1,\dots,x_{k+1})\,\dD x_{k+1}
	\right\|
	\lesssim \left\| f_{k+1}\right\|\lesssim
	\left\| F\right\|^{k+1}
	\lesssim
	\left\| f_{k}\right\|^{1+\frac{1}{k}}
	\,,
\end{equation}
which introduces a nonlinear dependence on $ f_k $ in equation \eqref{BBGKY}. \\
To address this issue, we propose two different methods. In Section \ref{sec:K:H-1}, we handle the case where $ K \in H^{-1} $ with a sufficiently large $\sigma > 0$, and in Section \ref{sec:W:theta}, we treat the case where $ K \in W^{\frac{-2}{d+2}, d+2} $, without any constraint on $\sigma > 0$. These approaches allow us to control the higher-order terms and avoid the blow-up scenario. The computations presented in Sections \ref{sec:K:H-1} and \ref{sec:W:theta} are formal, as we deal with weak solutions to the Liouville equation \eqref{Liouville}. However, we justify these formal computations in Appendix \ref{App:WP}.

In both cases, we leverage the dissipation induced by the diffusion on the right-hand side of \eqref{BBGKY} to control the term \eqref{hierarchy:term}. We begin with the case $K \in H^{-1}$, which is less technically demanding from a mathematical perspective but encapsulates the core idea of our approach.

\subsection{Proof of Theorem \ref{th:2}: the case \texorpdfstring{$K\in H^{-1}$}{KH-1}}\label{sec:K:H-1} In this section, we establish uniform-in-time and uniform-in-number-of-particles $ L^2 $-estimates for the marginals of the system $ (f_{k,N})_{1 \leq k \leq N} $ under the assumption that the interaction kernel $ K $ in \eqref{System} belongs to $ H^{-1} $. The main challenge is estimating \eqref{hierarchy:term}, as it involves the higher-order marginal $f_{k+1}$ in the equation \eqref{BBGKY}. We demonstrate that when $K \in H^{-1}$, it is feasible to control \eqref{hierarchy:term} using the dissipation induced by the diffusion on the right-hand side of \eqref{BBGKY} for sufficiently large values of $\sigma > 0$.

Let us outline our strategy in the case of a chaotic marginal $f_{k+1}$, specifically when $f_{k+1} = F^{\otimes (k+1)}$. We establish that the dissipation of the $L^2$-norm induced by the Laplace operator, denoted as $\mathcal{C}$ in our proof below, satisfies the following properties:
\begin{equation*}
	\left|\cC\right|^{\frac{1}{2}}
	\sim
	\sigma^{\frac{1}{2}}\left\| f_{k}\right\|
	\,.
\end{equation*}
Hence, our approach boils down to taking $\sigma^{\frac{1}{2}} \geq \left\| F\right\|$ , which yields:
\begin{equation*}
	\left|\cC\right|^{\frac{1}{2}}\,\sim
	\left\| F\right\|\left\| f_{k}\right\|
	\sim
	\left\| f_{k}\right\|^{1+\frac{1}{k}}
	\,,
\end{equation*}
and allows to compensate the right hand side in \eqref{naive:estim}.
\begin{proof}[Proof of Theorem \ref{th:2}] 
	We fix $t\geq0$ and $(k,N)\in\left(\N^\star\right)^2$ such that $1\leq k\leq N$. To estimate the $L^2$-norm of $f_k$, we compute its time derivative by multiplying equation \eqref{BBGKY} by $f_k$ and integrating over $\mathbb{T}^{dk}$. This yields the following result:
	\begin{equation*}
		\frac{1}{2}\frac{\dD}{\dD t}  \left\|f_k(t,\cdot)\right\|_{L^2(\mathbb{T}^{dk})}^2
		\,=\;\cA\,+\,\cB\,+\,\cC\,,
	\end{equation*}
	where $\cA$, $\cB$ and $\cC$ are given by
	\[
	\left\{\hspace{-0,4cm}
	\begin{array}{ll}
		&\ds\cA\,=\,-\,\frac{1}{N}\,\sum_{\substack{i,j=1\\ i\neq j}}^k\int_{\mathbb{T}^{dk}}   \udiv_{x_i}\left(K(x_i-x_j) f_{k}\left(t,X^k\right)\right)f_{k}\left(t,X^k\right)\,\dD X^k\,,  \\[1.em]
		&\ds\cB\,=\, -\,\frac{N-k}{N} \,\sum_{i=1}^k \int_{\mathbb{T}^{dk}}\udiv_{x_i}\left( \int_{\mathbb{T}^d} K(x_i-x_{k+1}) f_{k+1}\left(t,X^{k+1}\right) \dD x_{k+1}\right) f_{k}(t,X^{k})\,\dD X^k \,,\\[1.em]
		&\ds\cC\,=\, \sigma\,\sum_{i=1}^k\int_{\mathbb{T}^{dk}} f_k\left(t,X^k\right)\Delta_{x_i} f_{k}\left(t,X^k\right)\,\dD X^k\,. 
	\end{array}
	\right.
	\]
	First, we integrate by part with respect to $x_i$, $i\in\{1,\dots,k\}$, in $\cA$, $\cB$ and $\cC$, which yields:
	\[
	\left\{\hspace{-0,4cm}
	\begin{array}{ll}
		&\ds\cA\,=\,\frac{1}{N}\,\sum_{\substack{i,j=1\\ i\neq j}}^k\int_{\mathbb{T}^{dk}}   K(x_i-x_j)\cdot f_{k}(t,X^k)\nabla_{x_i} f_{k}(t,X^k)\,\dD X^k\,,  \\[1.em]
		&\ds\cB\,=\, \frac{N-k}{N} \sum_{i=1}^k \int_{\mathbb{T}^{dk}}\left( \int_{\mathbb{T}^d} K(x_i-x_{k+1}) f_{k+1}(t,X^{k+1})\, \dD x_{k+1}\right)  \cdot \nabla_{x_i} f_{k}(t,X^{k})\,\dD X^k \,,\\[1.5em]
		&\ds\cC\,=\,- \,\sigma\left\|\nabla_{X^k} f_k(t,\cdot)\right\|^2_{L^{2}\left(\mathbb{T}^{dk}\right)}\,\leq\,0\,. 
	\end{array}
	\right.
	\]
On one hand, we demonstrate that $\mathcal{A}$ vanishes because $K$ is divergence-free, as stated in \eqref{hyp:K:sing}. Conversely, the term $\mathcal{C}$ captures the contribution of diffusion on the right-hand side of \eqref{BBGKY}. This term $\mathcal{C}$ has a definite sign, which we leverage to control the primary contribution $\mathcal{B}$ that depends on $f_{k+1}$.
	
	As mentioned above, $\mathcal{A}$ vanishes due to the divergence free assumption \eqref{hyp:K:sing} on $K$. Indeed, using the relation $f_{k}\nabla_{x_i} f_{k}\,=\, \nabla_{x_i}\left| f_{k}\right|^2/2$ and integrating by parts with respect to $x_i$ in $\cA$, we obtain:
	\[
	\cA\,=\,\frac{1}{2N}\,\sum_{\substack{i,j=1\\ i\neq j}}^k\int_{\mathbb{T}^{dk}}   K(x_i-x_j)\cdot \nabla_{x_i}\left| f_{k}\right|^2(t,X^k) \,\dD X^k\,=\,0\,.
	\]
	Let us estimate $\cB$. First, we point out that $\cB$ vanishes when $k=N$. In the cases $k\leq N-1$, we apply Cauchy-Schwarz inequality and find:
	\[
	\cB\,\leq\, \sum_{i=1}^k \left(\int_{\mathbb{T}^{dk}}\left| \int_{\mathbb{T}^d} K(x_i-x_{k+1}) f_{k+1}(t,X^{k+1}) \dD x_{k+1}\right|^2\dD X^k\right)^{\frac{1}{2}}
	\left\|\nabla_{x_i} f_{k}(t,\cdot) \right\|_{L^2\left(\mathbb{T}^{dk}\right)}.
	\]
	Since, particles are indistinguishable according to \eqref{hyp:f0}, it holds:
	\[\left\|\nabla_{x_i} f_{k}(t,\cdot) \right\|_{L^2\left(\mathbb{T}^{dk}\right)} = \frac{1}{k^{\frac{1}{2}}}\left\|\nabla_{X^k} f_{k}(t,\cdot) \right\|_{L^2\left(\mathbb{T}^{dk}\right)}\,,\quad\forall i\in\{1,\dots,k\}\;.\]
	Therefore, we find the following estimate for $\cB$:
	\begin{equation}\label{estim:B:sec:theta}
		\cB\,\leq\,
		\frac{1}{k^{\frac{1}{2}}} \left\|\nabla_{X^k} f_{k}(t,\cdot) \right\|_{L^2\left(\mathbb{T}^{dk}\right)}\sum_{i=1}^k \left(\int_{\mathbb{T}^{dk}}\left| \int_{\mathbb{T}^d} K(x_i-x_{k+1}) f_{k+1}(t,X^{k+1})\, \dD x_{k+1}\right|^2\dD X^k\right)^{\frac{1}{2}}
		.
	\end{equation}
	To bound the integral in the latter estimate, we replace $K$ with any $\phi$ satisfying $K=\udiv_x (\phi)$, which yields:
	\begin{equation*}
		\cB\leq
		\frac{1}{k^{\frac{1}{2}}} \left\|\nabla_{X^k} f_{k}(t,\cdot) \right\|_{L^2\left(\mathbb{T}^{dk}\right)}\hspace{-0.05cm}\sum_{i=1}^k \hspace{-0.05cm}\left(\int_{\mathbb{T}^{dk}}\left| \int_{\mathbb{T}^d} \udiv_x \phi(x_i-x_{k+1}) f_{k+1}(t,X^{k+1}) \dD x_{k+1}\right|^2 \dD X^k \hspace{-0.1cm} \right)^{\frac{1}{2}}\hspace{-0.1cm}
		.
	\end{equation*}
	 Then, we integrate by parts with respect to variable $x_{k+1}$ and apply the Cauchy-Schwarz inequality, which yields
	\begin{equation*}
		\cB\,\leq\,
		\frac{1}{k^{\frac{1}{2}}} \left\|\nabla_{X^k} f_{k}(t,\cdot) \right\|_{L^2\left(\mathbb{T}^{dk}\right)}\sum_{i=1}^k\left\|\phi\right\|_{L^{2}(\mathbb{T}^d)} \|\nabla_{x_{k+1}}f_{k+1}(t,\cdot)\|_{L^2\left(\mathbb{T}^{d(k+1)}\right)}
		.
	\end{equation*}
	Next, we take the infimum over all $\phi$ satisfying $K\,=\,\udiv_x (\phi)$ and sum over $i$ between $1$ and $k$, which yields:
	\begin{equation*}
		\cB\,\leq\,
		\frac{1}{k^{\frac{1}{2}}} \left\|\nabla_{X^k} f_{k}(t,\cdot) \right\|_{L^2\left(\mathbb{T}^{dk}\right)} k \left\|K\right\|_{H^{-1}(\mathbb{T}^d)}  \|\nabla_{x_{k+1}}f_{k+1}(t,\cdot)\|_{L^2\left(\mathbb{T}^{d(k+1)}\right)}
		.
	\end{equation*}
	We estimate the latter right hand side thanks to the relation
	 $\|\nabla_{x_{k+1}}f_{k+1}(t,\cdot)\|_{L^2} =\|\nabla_{X^{k+1}}f_{k+1}(t,\cdot)\|_{L^2}/\sqrt{k+1}$, which holds according to  \eqref{hyp:f0}. This yields:
	\begin{equation*}
		\mathcal{B}\,\leq\, \left\|K\right\|_{H^{-1}(\mathbb{T}^d)}  \|\nabla_{X^{k+1}}f_{k+1}(t,\cdot)\|_{L^2(\mathbb{T}^{d(k+1)})}\|\nabla_{X^{k}}f_{k}(t,\cdot)\|_{L^2\left(\mathbb{T}^{dk}\right)}.
	\end{equation*}
	Gathering our estimates on $\cA$ and $\cB$, we find the following differential inequality:
	\begin{equation*}
		\begin{split}
		\frac{1}{2}\frac{\dD}{\dD t}  \left\|f_k(t,\cdot)\right\|_{L^{2}(\mathbb{T}^{dk})}^2
		\leq&\left\|K\right\|_{H^{-1}(\mathbb{T}^{d})}  \|\nabla_{X^{k+1}}f_{k+1}(t,\cdot)\|_{L^2(\mathbb{T}^{d(k+1)})} \|\nabla_{X^{k}}f_{k}(t,\cdot)\|_{L^2(\mathbb{T}^{dk})}\\
		&- \,\sigma\left\|\nabla_{X^k} f_k(t,\cdot)\right\|^2_{L^{2}(\mathbb{T}^{dk})}.
		\end{split}
	\end{equation*}
	We apply Young inequality to estimate the product in the latter right hand side and obtain:
	\begin{equation*}
		\frac{1}{2}\frac{\dD}{\dD t}  \left\|f_k(t,\cdot)\right\|_{L^{2}(\mathbb{T}^{dk})}^2
		\leq\frac{\left\|K\right\|^2_{H^{-1}(\mathbb{T}^{dk})}}{2\,\sigma}  \|\nabla_{X^{k+1}}f_{k+1}(t,\cdot)\|^2_{L^2(\mathbb{T}^{d(k+1)})} 
		- \,\frac{\sigma}{2}\left\|\nabla_{X^k} f_k(t,\cdot)\right\|^2_{L^{2}(\mathbb{T}^{dk})}.
	\end{equation*}
	Our strategy is to compensate for the higher-order norm of $f_{k+1}$ in the latter estimate with the dissipation term resulting from diffusion. To achieve this, we divide the latter estimate by $R^{2k}$, where $R > 0$ is specified in \eqref{SmallnessH-1}, and then take the sum for $k$ ranging from $1$ to $N$. After re-indexing, this yields:
	\begin{equation*}
		\frac{\dD}{\dD t} \sum_{k=1}^N\frac{\|f_k(t,\cdot)\|_{L^2(\mathbb{T}^{dk})}^2}{R^{2k}}
		\leq 
		\sum_{k=1}^{N} \left(\frac{\|K\|^2_{H^{-1}(\mathbb{T}^{d})}}{\sigma R^{2(k-1)}}-\frac{\sigma}{R^{2k}}\right)\|\nabla_{X^k} f_k(t,\cdot)\|^2_{L^2(\mathbb{T}^{dk})}
		\,.
	\end{equation*}
	Since $\sigma \geq\sigma_0$, where $\sigma_0=R\,\|K\|_{H^{-1}\left(\mathbb{T}^d\right)}$ is given in Theorem \ref{th:2}, we find:
	\begin{equation*}
		\frac{\dD}{\dD t} \sum_{k=1}^N\frac{\|f_k(t,\cdot)\|_{L^2(\mathbb{T}^{dk})}^2}{R^{2k}}\leq  0\,.
	\end{equation*}
	Then, we integrate the latter estimate between $0$ and $t$, which yields:
	\begin{equation*}
		\sum_{k=1}^N\frac{\|f_{k,N}(t,\cdot)\|_{L^2(\mathbb{T}^{dk})}^2}{R^{2k}}
		\,\leq\,
		\sum_{k=1}^N\frac{\|f_{k,N}^0\|_{L^2(\mathbb{T}^{dk})}^2}{R^{2k}} \,,
	\end{equation*}
	for all time $t>0$. Next, we bound the latter right hand side thanks to assumption \eqref{SmallnessH-1}, which yields
	\begin{equation*}
		\sum_{k=1}^N\frac{\|f_{k,N}(t,\cdot)\|_{L^2(\mathbb{T}^{dk})}^2}{R^{2k}}
		\,\leq\,C^2\,,
	\end{equation*}
	for all time $t\geq 0$ and where $C$ is given in \eqref{SmallnessH-1}. To conclude our proof, we fix some $k$ between $1$ and $N$, lower bound the latter left hand side by $\|f_{k,N}(t,\cdot)\|_{L^2(\mathbb{T}^{dk})}^2/R^{2k}$,  and take the square root on both sides of the inequality, which yields the expected result
	\begin{equation*}
		\frac{\|f_{k,N}(t,\cdot)\|_{L^2(\mathbb{T}^{dk})}}{R^{k}}
		\,\leq\,C\,, \quad \forall t\in \R^+\,,\quad \forall k \in \left\{1,\dots, N\right\}\,.
	\end{equation*}
\end{proof}
\begin{remark}\label{RemarkSizeSigma}
	Although this result is not directly comparable in terms of techniques or objectives to \cite[Theorem 4]{CourcelRosenzweigSerfaty2023log}, when we restrict our analysis to the specific case of the attractive logarithmic kernel on $\mathbb{T}^2$, the condition \eqref{ConditionSigmaH-1} becomes: 
	\[
		\sigma_0\geq 0.18748\cdot R.
	\]
	In the same context of the attractive logarithmic kernel, uniform-in-time propagation of chaos is established in \cite[Theorem 4]{CourcelRosenzweigSerfaty2023log} for $\sigma > 0.25$, assuming the existence of a sufficiently regular solution to the limiting equation with initial data $\bar{f}_0$ that is close to chaos and satisfies the smallness condition  $\|\log(\bar{f}_0)\|_{L^\infty(\mathbb{T}^2)} \leq C\left(\sigma - \frac{1}{2\Pi}\right)$, where $C$ is a constant independent of $\sigma$.
\end{remark}

In the next section, we demonstrate that it is possible to remove the high-temperature constraint $\sigma\geq \sigma_0$ when the kernel $K$ belongs to $W^{\frac{-2}{d+2},\,d+2}$. Although our overall strategy remains unchanged, this section is technically more involved: we extend our previous computations, which are valid for $K \in H^{-1}$, using an interpolation argument that allows to address cases where $K \in W^{-\theta, \frac{2}{\theta}}$ for any value of $\theta \in [0,1]$. The case $\theta = \frac{2}{d+2}$, which corresponds to $K \in W^{\frac{-2}{d+2},\,d+2}$, emerges as a limiting case where there is no constraint on the size of $\sigma$.

 \subsection{Proof of Theorem \ref{th:1}: the case \texorpdfstring{$K\in W^{\frac{-2}{d+2},\,d+2}$}{KW} }\label{sec:W:theta}
In this section, we derive uniform-in-time and uniform-in-number-of-particles $L^2$-estimates for  $(f_{k,N})_{1 \leq k \leq N}$ when the interaction kernel $K$ in \eqref{System} belongs to $W^{\frac{-2}{d+2},\,d+2}$. 

As previously mentioned, the main challenge is estimating \eqref{hierarchy:term}, which involves the higher-order marginal $f_{k+1}$ in the equation \eqref{BBGKY}. Instead of relying on large $\sigma$ to compensate for the naive estimate \eqref{naive:estim}, as we did in Section \ref{sec:K:H-1}, we refine \eqref{naive:estim} when $K \in W^{\frac{-2}{d+2},\,d+2}$ in the key result of this section: Lemma \ref{Lemma:hierarchy:term} below. More specifically, we obtain:
\[
\left\|
\int_{\mathbb{T}^d} K(x_i-x_{k+1})f_{k+1}(t,x_1,\dots,x_{k+1})\,\dD x_{k+1}
\right\|
\lesssim
\left\| f_{k}(t,\cdot)\right\|^{1+O\left(\frac{1}{k^2}\right)}\,,\quad\textrm{as}\quad k\rightarrow +\infty\,,
\]
for every $t>0$.

Thanks to Lemma \ref{Lemma:hierarchy:term}, we are able to control the term depending on $f_{k+1}$ in \eqref{BBGKY} with the dissipation induced by the diffusion on the right hand side of \eqref{BBGKY} for any value of $\sigma>0$.

To demonstrate Lemma \ref{Lemma:hierarchy:term} we apply a Sobolev inequality with explicit and optimal dependence with respect to the dimension $dk$ of $\mathbb{T}^{dk}$, as $k \rightarrow +\infty$. A huge literature is dedicated to Sobolev inequalities on general Riemannian manifolds \cite{Aubin1976,Talenti1976,Hebey_Vaugon95,Druet98,Hebey99}. However, up to our knowledge, these results are not optimal in the particular case where the manifold is $\mathbb{T}^{dk}$. In the following result, we focus on the $n$-dimensional torus case and prove, thanks to an explicit construction, that the Sobolev inequality holds with explicit constants relative to $n$.

\begin{theorem}\label{Sob:ineq:Pi}
 The following inequality
\[
\left\|
f
\right\|_{L^{2^\star}(\mathbb{T}^n)}
\,\leq
\,
\sqrt{2e}\,K_n
\left(
\left\|
\nabla_{X^n}
f
\right\|^2_{L^{2}(\mathbb{T}^n)}
+
\frac{4n^2}{|\mathbb{T}|^2}
\left\|
f
\right\|^2_{L^{2}(\mathbb{T}^n)}
\right)^{\frac{1}{2}}\,,
\]
holds, for all function $f\in H^1\left(\mathbb{T}^n\right)$ with $n\geq3$. The critical Sobolev exponent  $2^{\star}$ is given by
$2^\star=2n/(n-2)$. Here, 
 $|\mathbb{T}|$ denotes the length of the torus, and $K_n$ the optimal Sobolev constant on $\R^n$. Specifically, $K_n$ is (see \cite{Aubin1976,Talenti1976})
\[
K_n\,=\,
\left(\frac{1}{n(n-2)}\right)^{\frac{1}{2}}
\left(
\frac{\Gamma(n+1)}{\Gamma\left(\frac{n}{2}\right)\Gamma\left(\frac{n}{2}+1\right)\omega_{n-1}}
\right)^{\frac{1}{n}}\,,
\]
where $\Gamma$ denotes the factorial Gamma function and $\omega_{n-1}$ the volume of the $n-1$ unit sphere.
\end{theorem}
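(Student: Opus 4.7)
The plan is to derive the Sobolev inequality on $\mathbb{T}^n$ from the sharp Aubin--Talenti inequality on $\mathbb{R}^n$ via a periodization/weight argument that exploits the tensor-product structure of the torus. Because $\mathbb{T}^n$ is a product of $n$ circles, the delicate $n$-dependence in the constants $\sqrt{2e}$ and $4n^2/|\mathbb{T}|^2$ should come from a one-dimensional optimization, while the sharp Euclidean constant $K_n$ will be inherited verbatim.

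First, I would regard $f \in H^1(\mathbb{T}^n)$ as a $|\mathbb{T}|$-periodic function on $\mathbb{R}^n$ and introduce a tensorized weight $\phi(x) := \prod_{i=1}^n \psi(x_i)$, where $\psi: \mathbb{R} \to [0,\infty)$ is a one-dimensional profile with sufficient decay at infinity. The natural candidate is $\psi(y) = e^{-\alpha |y|}$ with $\alpha > 0$ a parameter to be tuned. Applying the sharp Euclidean Sobolev inequality to $g := f\,\phi \in H^1(\mathbb{R}^n)$ yields
\[
\|g\|_{L^{2^\star}(\mathbb{R}^n)} \,\leq\, K_n\, \|\nabla g\|_{L^2(\mathbb{R}^n)}.
\]

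Next, using periodicity of $f$ together with the tensor-product form of $\phi$, both norms reduce to torus integrals whose weights are periodized sums $S_p(y) := \sum_{k \in \mathbb{Z}} \psi(y+k|\mathbb{T}|)^p$ with $p \in \{2, 2^\star\}$. Expanding $|\nabla g|^2 = \phi^2 |\nabla f|^2 + 2 f \phi\, \nabla f \cdot \nabla \phi + f^2 |\nabla\phi|^2$ and integrating the cross term by parts yields the identity
\[
\int_{\mathbb{R}^n} |\nabla g|^2 \, dx \,=\, \int_{\mathbb{R}^n} \phi^2 |\nabla f|^2 \, dx \,-\, \int_{\mathbb{R}^n} f^2 \,\phi\, \Delta \phi \, dx,
\]
so after periodization the right-hand side becomes an explicit linear combination of $\|\nabla f\|_{L^2(\mathbb{T}^n)}^2$ and $\|f\|_{L^2(\mathbb{T}^n)}^2$ whose coefficients are one-dimensional integrals of $\psi$, $\psi'$ and $\psi\psi''$.

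I would then optimize in $\alpha$ to produce the stated constants. The critical scaling should be $\alpha \sim n/|\mathbb{T}|$: the factor $n$ is amplified by the sum $\Delta\phi = \sum_i \psi''(x_i)\prod_{j\neq i}\psi(x_j)$, and the minimum of the one-dimensional ratio $S_2^{n/2}\,S_{2^\star}^{-n/2^\star}$ should yield the universal constant $\sqrt{2e}$, while the boundary contributions deliver the prefactor $4n^2/|\mathbb{T}|^2$ on the $L^2$ term.

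The principal obstacle lies in the closed-form evaluation (or sharp pointwise bound) of the periodized sums $S_p$ for the exponential profile, and in the careful treatment of the distributional contribution of $\psi''$ at $y = 0$: the delta mass there is what ultimately supplies the boundary term responsible for the coefficient $4n^2/|\mathbb{T}|^2$. A secondary subtlety is ensuring that the tensorization does not introduce spurious dimensional factors such as $2^n$; the tensorized weight combined with the exact Aubin--Talenti constant is chosen precisely so that all $n$-dependence collapses into the single factor $4n^2/|\mathbb{T}|^2$ together with the universal constant $\sqrt{2e}$.
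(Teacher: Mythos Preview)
Your high-level strategy---tensorized one-dimensional weight, periodize, apply the sharp Euclidean inequality---matches the paper's. But two of your specific choices break the argument. First, the integration-by-parts identity $\int|\nabla g|^2=\int\phi^2|\nabla f|^2-\int f^2\phi\Delta\phi$ with $\psi(y)=e^{-\alpha|y|}$ does \emph{not} produce a term controlled by $\|f\|_{L^2(\mathbb{T}^n)}^2$: since $\psi''=\alpha^2\psi-2\alpha\delta_0$, the delta contribution yields integrals of $f^2$ over hyperplanes $\{x_i=0\}$, i.e.\ trace terms that cannot be bounded by $\|f\|_{L^2}$ without spending a derivative. So the claim that ``the delta mass \ldots\ supplies the coefficient $4n^2/|\mathbb{T}|^2$'' is incorrect. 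Second, the exponential weight has no plateau, so the periodized sum $S_{2^\star}(y)=\sum_k e^{-2^\star\alpha|y+k|}$ is not uniformly bounded below by a useful constant; a direct computation gives $\inf_y S_{2^\star}=1/\sinh(2^\star\alpha/2)$, and any choice of $\alpha$ making the right-hand side acceptable makes the left-hand side lower bound exponentially small in $n$.

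The paper avoids both problems by (i) taking a compactly supported piecewise-linear $\psi$ that equals $1$ on $[-1/2,1/2]$ and vanishes outside $[-(1+\eta)/2,(1+\eta)/2]$, so that the periodized $2^\star$-weight satisfies $\omega_n^\star\geq 1$ pointwise, trivializing the left-hand side lower bound; (ii) using Young's inequality $|\nabla(f\varphi)|^2\leq 2(\varphi^2|\nabla f|^2+f^2|\nabla\varphi|^2)$ instead of integration by parts, which costs the factor $2$ but avoids trace terms; and (iii) averaging the resulting pointwise inequality over all translates $\varphi_n(\cdot-Y^n)$, which turns the right-hand side weights into the global norms $\|\varphi_n\|_{L^2(\mathbb{R}^n)}^2$ and $\|\nabla\varphi_n\|_{L^2(\mathbb{R}^n)}^2$. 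With the tensorized trapezoid one computes $\|\varphi_n\|_{L^2}^2\leq(1+\eta)^n$ and $\|\nabla\varphi_n\|_{L^2}^2\leq\frac{4n}{\eta}(1+\eta)^{n-1}$; choosing $\eta=1/n$ gives $(1+1/n)^n\leq e$ and $4n/\eta=4n^2$, which together with the factor $2$ from Young yields exactly $\sqrt{2e}$ and $4n^2/|\mathbb{T}|^2$. Your proposal is missing both the plateau and the translation-averaging step; without them the constants blow up in $n$.
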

In the proof of Theorem \ref{Sob:ineq:Pi}, which is deferred to Appendix \ref{Proof:th:sob:ineq}, we apply the optimal Sobolev inequality on $\mathbb{R}^n$ to the periodic extension of $f \in H^1\left(\mathbb{T}^n\right)$, truncated using an appropriate step function.

A weaker version of the inequality in Theorem \ref{Sob:ineq:Pi} suffices for our purposes, as we are primarily focused on the dependence on the dimension $n$. Specifically, the Stirling approximation of the Gamma function $\Gamma$ ensures that:
\[
K_n\;\underset{N\rightarrow +\infty}{=}\;
O\left(\frac{1}{\sqrt{n}}\right)\,.
\]
Hence, Theorem \ref{Sob:ineq:Pi} with $n = dk$, combined with the previous estimate, ensures that there exists a constant $C$ depending only on the size of the box $|\mathbb{T}|$ and the dimension $d$, such that 
\begin{equation}\label{sob:ineq:eff}  
\left\|
f
\right\|_{L^{2_k^\star}(\mathbb{T}^{dk})}
\,\leq
\,
C
\left(
\frac{1}{\sqrt{k}}
\left\|
\nabla_{X^k}
f
\right\|_{L^{2}(\mathbb{T}^{dk})}
+
\sqrt{k}
\left\|
f
\right\|_{L^{2}(\mathbb{T}^{dk})}
\right)\,,
\end{equation}
holds, for all $f\in H^1\left(\mathbb{T}^{dk}\right)$, where $2_k^\star = 2dk/(dk-2)$, as soon as $d$ and $k$ are greater or equal to $2$.

Thanks to the Sobolev inequality \eqref{sob:ineq:eff}, we can estimate the term \eqref{hierarchy:term}, which involves the higher-order marginal $f_{k+1}$ in equation \eqref{BBGKY}. The key idea is to "trade off" powers for derivatives of $f_k$ by applying the Sobolev inequality \eqref{sob:ineq:eff}.
\begin{lemma}\label{Lemma:hierarchy:term}
Under assumption \eqref{hyp:f0} on $(f^0_N)_{N \geq 1}$ and assumption \eqref{K:1} on $K$, consider the solutions $(f_N)_{N \geq 1}$ to the Liouville equation \eqref{Liouville} with initial conditions $(f^0_N)_{N \geq 1}$. There exists a constant $C_\delta$, for any $\delta > 0$, depending on $K$ such that  the marginals $(f_{k,N})_{1 \leq k \leq N}$ satisfy:
\[
\begin{split}
	\Big(\int_{\mathbb{T}^{dk}}  \Big|&\int_{\mathbb{T}^d}  K(x_i- x_{k+1})f_{k+1}\left(t,X^{k+1}\right)\dD x_{k+1}\Big|^2\dD X^k\Big)^{\frac{1}{2}} \\
	\leq & \frac{C\delta}{k^{\frac{1}{2}}}\,
	\max{\left(
		\left\|
		\nabla_{X^k}
		f_k(t,\cdot)
		\right\|^{1-\theta- \frac{\theta}{k}
			+\frac{2\theta}{k(dk+2)}}_{L^{2}(\mathbb{T}^{dk})}
		\,,\,
		C^{k}k^{\frac{d k}{4}(1-\theta)}\right)}\left\|\nabla_{X^{k+1}}f_{k+1}(t,\cdot)\right\|^{\theta}_{L^{2}(\mathbb{T}^{d(k+1)})}\\
	&+\frac{C_\delta}{k^{\frac{1}{2}}}\max{\left(
		\left\|
		\nabla_{X^k}
		f_k(t,\cdot)
		\right\|^{\frac{dk}{dk+2}}_{L^{2}(\mathbb{T}^{dk})}
		\,,\,
		C^{k}k^{\frac{dk}{4}}\right)}\,.
\end{split} 
\]
for any time $t \geq 0$, any $(k,N) \in \mathbb{N}^2$, with $2 \leq k \leq N$, and  any $1\leq i \leq k$, where $\theta = 2/(d+2)$, and
for some positive constant $C>0$, depending only on $d$ and the size of the torous $|\mathbb{T}|$.
\end{lemma}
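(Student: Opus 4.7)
The plan is to disentangle $f_{k+1}$ from $f_k$ in $I^{1/2}:=(\int_{\mathbb{T}^{dk}}|\int_{\mathbb{T}^d}K(x_i-x_{k+1})f_{k+1}\,\dD x_{k+1}|^2\,\dD X^k)^{1/2}$ by Sobolev duality and interpolation in the variable $y=x_{k+1}$, and then to convert the residual $\|f_k\|_{L^2(\mathbb{T}^{dk})}$ into a fractional power of $\|\nabla_{X^k}f_k\|_{L^2}$ using the sharp Sobolev inequality on $\mathbb{T}^{dk}$ provided by Theorem~\ref{Sob:ineq:Pi}. The two structural facts exploited throughout are the averaging identity $\int_{\mathbb{T}^d}f_{k+1}(X^k,y)\,\dD y = f_k(X^k)$ and the exchangeability relation $\|\nabla_{x_{k+1}}f_{k+1}\|_{L^2}=\|\nabla_{X^{k+1}}f_{k+1}\|_{L^2}/\sqrt{k+1}$ inherited from \eqref{hyp:f0}.

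First, I write $K=\mathrm{div}_x\phi$ with $\phi\in W^{1-\theta,2/\theta}(\mathbb{T}^d)$ and integrate by parts in $y$; the duality pairing $W^{-\theta,2/\theta}\times W^{\theta,2/(2-\theta)}\to\R$ together with translation invariance gives the pointwise bound $|u(X^k)|\leq C\|K\|_{W^{-\theta,2/\theta}}\|f_{k+1}(X^k,\cdot)\|_{W^{\theta,2/(2-\theta)}_y}$, where $u(X^k):=\int K(x_i-y)f_{k+1}(X^k,y)\,\dD y$. Second, I apply the real-interpolation identity $[L^1_y,H^1_y]_\theta=W^{\theta,2/(2-\theta)}_y$, which, after subtracting the $y$-average $\bar f_{k+1}(X^k):=f_k(X^k)/|\mathbb{T}|^d$ (so a Gagliardo--Nirenberg inequality is available for the mean-zero remainder) and using $\|f_{k+1}(X^k,\cdot)-\bar f_{k+1}\|_{L^1_y}\leq 2f_k(X^k)$, yields the $X^k$-pointwise estimate $\|f_{k+1}(X^k,\cdot)\|_{W^{\theta,2/(2-\theta)}_y}\leq C_1 f_k^{1-\theta}\|\nabla_y f_{k+1}\|^\theta_{L^2_y}+C_2 f_k$. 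Squaring, integrating over $X^k$, applying H\"older with the conjugate exponents $1/(1-\theta)$ and $1/\theta$ to decouple the $f_k$ and $\nabla_y f_{k+1}$ factors, and using exchangeability produces $I^{1/2}\leq C_3(k+1)^{-\theta/2}\|f_k\|^{1-\theta}_{L^2(\mathbb{T}^{dk})}\|\nabla_{X^{k+1}}f_{k+1}\|^\theta_{L^2}+C_4\|f_k\|_{L^2(\mathbb{T}^{dk})}$.

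The heart of the proof is then to convert $\|f_k\|_{L^2(\mathbb{T}^{dk})}$ into a power of $\|\nabla_{X^k}f_k\|_{L^2}$. Combining \eqref{sob:ineq:eff} with the H\"older interpolation $\|f_k\|_{L^2}\leq\|f_k\|_{L^1}^{2/(dk+2)}\|f_k\|_{L^{2^\star_k}}^{dk/(dk+2)}$ and the normalization $\|f_k\|_{L^1}=1$, a case split on whether the $\|\nabla_{X^k}f_k\|_{L^2}/\sqrt k$ or the $\sqrt k\,\|f_k\|_{L^2}$ term dominates on the right-hand side of \eqref{sob:ineq:eff} gives $\|f_k\|_{L^2(\mathbb{T}^{dk})}\leq\max\bigl(C\,k^{-dk/(2(dk+2))}\|\nabla_{X^k}f_k\|_{L^2}^{dk/(dk+2)},\;C^k k^{dk/4}\bigr)$. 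Raising this estimate to the power $1-\theta$ and invoking the algebraic identity $(1-\theta)\cdot\frac{dk}{dk+2}=1-\theta-\frac{\theta}{k}+\frac{2\theta}{k(dk+2)}$, which holds precisely because $\theta=\frac{2}{d+2}$, produces the exponents that appear in the first (main) term of the lemma; applying the same Sobolev bound directly to the residual $C_4\|f_k\|_{L^2}$ term yields the second (lower-order) term with exponent $dk/(dk+2)$. A final Young-inequality split of the product $\|f_k\|^{1-\theta}_{L^2}\|\nabla_{X^{k+1}}f_{k+1}\|^\theta_{L^2}$, trading a small weight $\delta$ on the $f_{k+1}$-piece for a large weight $C_\delta$ on the residual $f_k$-piece, accommodates the $\delta/C_\delta$ dependence in the statement.

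The main obstacle is the bookkeeping of powers of $k$: the stated prefactor $1/\sqrt{k}$ is the sum of the exchangeability loss $-\theta/2$ and the Sobolev loss $-(1-\theta)dk/(2(dk+2))$, which balance to exactly $-1/2$ only asymptotically in $k$, so that the residual $O(1)$ correction at finite $k$ must be absorbed into the constant. It is precisely this asymptotic balance, tied to the arithmetic identity displayed above, that singles out $\theta=2/(d+2)$ as the critical regularity exponent treatable without any restriction on the size of $\sigma$.
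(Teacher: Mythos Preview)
Your overall strategy is close to the paper's, and the Sobolev conversion step is essentially identical: the interpolation $\|f_k\|_{L^2}\leq\|f_k\|_{L^1}^{2/(dk+2)}\|f_k\|_{L^{2^\star_k}}^{dk/(dk+2)}$ combined with \eqref{sob:ineq:eff}, the case split, and the algebraic identity $(1-\theta)\cdot\frac{dk}{dk+2}=1-\theta-\frac{\theta}{k}+\frac{2\theta}{k(dk+2)}$ all appear verbatim in the paper's proof. The interpolation is applied to a different object, though: you interpolate the function $y\mapsto f_{k+1}(X^k,y)$ via a Gagliardo--Nirenberg inequality between $L^1_y$ and $H^1_y$, whereas the paper interpolates the linear operator $T:\psi\mapsto\int(\udiv_x\psi)(x_i-y)f_{k+1}\,\dD y$ between the endpoint bounds $W^{1,\infty}\to L^2(\T^{dk})$ and $L^2\to L^2(\T^{dk})$, invoking the nontrivial identification $(L^2,W^{1,\infty})_{1-\theta,2/\theta}=W^{1-\theta,2/\theta}$ established in Appendix~\ref{App:Interp}. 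Both routes produce the same intermediate bound $I^{1/2}\lesssim\|f_k\|_{L^2}^{1-\theta}\|\nabla_{x_{k+1}}f_{k+1}\|_{L^2}^\theta+C\|f_k\|_{L^2}$; yours is more direct but requires care with the $L^1$ endpoint of Gagliardo--Nirenberg, while the paper offloads the delicate part to the interpolation-space identity.

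There is, however, a genuine gap in your handling of the $\delta/C_\delta$ structure. Applying Young's inequality to the product $\|f_k\|_{L^2}^{1-\theta}\|\nabla_{X^{k+1}}f_{k+1}\|_{L^2}^\theta$ would \emph{separate} the two factors into a sum, whereas the lemma retains the product intact with a small multiplicative prefactor $C\delta$ in front of the whole first term. The paper does not obtain the $\delta$ by Young but by a density split of the kernel: write $\phi=\phi_\delta+(\phi-\phi_\delta)$ with $\|\phi_\delta\|_{W^{1-\theta,2/\theta}}\leq\delta$ and $(\phi-\phi_\delta)\in W^{1,\infty}$ (possible since $\scC^\infty$ is dense in $W^{1-\theta,2/\theta}$). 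The interpolation bound is applied only to the small piece $\phi_\delta$, yielding the $C\delta$ prefactor on the coupled term, while the regular remainder contributes $C_\delta\|f_k\|_{L^2}$ directly via $\|T(\phi-\phi_\delta)\|_{L^2}\leq\|\phi-\phi_\delta\|_{W^{1,\infty}}\|f_k\|_{L^2}$. This smallness is not cosmetic: in the proof of Theorem~\ref{th:1} one must eventually choose $\delta$ so that $C_\alpha\delta^{2/\theta}<1$ to make the sum over $k$ converge. Inserting the density split at the start of your argument (and dropping the final Young step) repairs the gap.
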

We emphasize that our estimate of \eqref{hierarchy:term} in Lemma \ref{Lemma:hierarchy:term} is homogeneous with respect to $f_k$ up to $O(1/k^2)$. Indeed, for a chaotic marginal $f_{k+1} = F^{\otimes (k+1)}$, the leading order satisfies:
\[
\begin{split}
\|\nabla_{X^k} f_k(t,\cdot)\|_{L^{2}(\mathbb{T}^{dk})}^{1-\theta- \frac{\theta}{k}
	+\frac{2\theta}{k(dk+2)}}&\left\|\nabla_{X^{k+1}}f_{k+1}(t,\cdot)\right\|^{\theta}_{L^2(\mathbb{T}^{d(k+1)})}\\&
\lesssim
\|F(t,\cdot)\|_{H^{1}(\mathbb{T}^{d(k+1)})}^{k+\frac{2\theta}{dk+2}}
\lesssim
\|f_k(t,\cdot)\|_{H^{1}(\mathbb{T}^{dk})}^{1\,+\,O\left(\frac{1}{k^2}\right)}
\,,\quad\textrm{as}\quad k\rightarrow \infty\, ,
\end{split}
\]
for every $t>0$.

This result allows us to control the variations in the $L^2$-norm of $(f_{k,N})_{1\leq k\leq N}$, caused by the term \eqref{hierarchy:term}, with the dissipation resulting from the diffusion term on the right-hand side of \eqref{BBGKY}.
\begin{proof}
Throughout this proof, we fix $(i,k,N) \in \left(\mathbb{N}^\star\right)^3$ such that $i \leq k \leq N-1$, and consider the marginal $f_{k+1}(t)$ of the solution $f_N$ to the Liouville equation \eqref{Liouville} at time $t \geq 0$. 

The first key point in our proof is to handle the singularity of $K$ using an interpolation argument. Specifically, we rewrite the $L^2\left(\mathbb{T}^{dk}\right)$-norm of \eqref{hierarchy:term} as follows:
\[
\int_{\mathbb{T}^{dk}}\left|\int_{\mathbb{T}^d} K(x_i-x_{k+1})f_{k+1}(t,x_1,\dots,x_{k+1})\dD x_{k+1}\right|^2\dD X^k
\,=\,\int_{\mathbb{T}^{dk}}\left|T(\phi)\right|^2\dD X^k\,,
\]
where $\phi$ is given in \eqref{hyp:K:div} and the linear operator $T$ reads
\[
T: \psi\; \longmapsto \;\int_{\mathbb{T}^d} \left(\udiv_{x}\psi\right)(x_i-x_{k+1})\,f_{k+1}(t,x_1,\cdots,x_{k+1})\,\dD x_{k+1}\,.
\]
Since $\phi$ belongs to $W^{1-\theta,2/\theta}$, we prove that $T$ is bounded from $W^{1-\theta,2/\theta}$ to $L^2\left(\mathbb{T}^{dk}\right)$. First, we isolate the small regions of $\mathbb{T}^d$ where $\phi$ is singular, using a density argument. More precisely, since $\mathcal{C}^{\infty}\left(\mathbb{T}^d\right)$ is dense in $W^{1-\theta,2/\theta}\left(\mathbb{T}^d\right)$, for any $\delta > 0$, there exists a matrix field $\phi_\delta$ such that
\begin{equation}\label{phi:decomp}
\left\{
\begin{array}{ll}
     &\ds \left(\phi-\phi_\delta\right)\,\in\,W^{1,\infty}\left(
     \mathbb{T}^d\right)\,,\\[1.5em]
     &\ds \phi_\delta\,\in\,W^{1-\theta,\frac{2}{\theta}}\left(\mathbb{T}^d\right)\,,\quad \textrm{and}\quad 
\left\|\phi_\delta\right\|_{W^{1-\theta,\frac{2}{\theta}}\left(\mathbb{T}^d\right)}\,\leq \,\delta\,.
\end{array}
\right.
\end{equation}
Therefore, the integral $T(\phi)$ admits the following bound:
\begin{equation}\label{phi:decomp:2}
\left\|T(\phi)\right\|_{L^2\left(\mathbb{T}^{dk}\right)}
\,\leq\,
\left\|T(\phi_\delta)\right\|_{L^2\left(\mathbb{T}^{dk}\right)} + \left\|T(\phi-\phi_\delta)\right\|_{L^2\left(\mathbb{T}^{dk}\right)}\,.
\end{equation}
To estimate the contribution of the regular part $\phi - \phi_\delta$, we take the $L^2\left(\mathbb{T}^{dk}\right)$-norm of $T(\phi - \phi_\delta)$ and estimate $\nabla_{x}\phi - \phi_\delta$ using its supremum over $\mathbb{T}^d$. Given that $f_{k+1}(t)$ assumes non-negative values, we obtain:
\[
\left\|T(\phi-\phi_\delta)\right\|_{L^2(\mathbb{T}^{dk})}\leq			 \left\|\phi-\phi_\delta\right\|_{W^{1,\infty}}
\left(
\int_{\mathbb{T}^{dk}}\left|\int_{\mathbb{T}^d} f_{k+1}(t,x_1,\dots,x_{k+1})\,\dD x_{k+1}\right|^2\dD X^k
\right)^{\frac{1}{2}}
.
\]
 Then, we use the relation $\ds\int_{\mathbb{T}^d} f_{k+1}\dD x_{k+1}=f_{k}$ , which yields:
\begin{equation}\label{estim:T:0}
\left\|T(\phi-\phi_\delta)\right\|_{L^2(\mathbb{T}^{dk})}\leq			C_{\delta}\left\|f_{k}(t,\cdot)\right\|_{L^2(\mathbb{T}^{dk})}\,,
		\end{equation}
where $C_\delta = \left\|\phi-\phi_\delta\right\|_{W^{1,\infty}}$.

In the rest of the proof, we estimate the contribution $\left\|T(\phi_\delta)\right\|_{L^2(\mathbb{T}^{dk})}$ from the small yet singular component $\phi_\delta$. To achieve this, we interpolate $T$ between $W^{1,\infty}(\mathbb{T}^{d})$ and $L^{2}(\mathbb{T}^{d})$. On one hand, similar computations to those used to derive \eqref{estim:T:0} confirm that $T$ is bounded from $W^{1,\infty}\left(\mathbb{T}^{d}\right)$ to $L^2\left(\mathbb{T}^{dk}\right)$, specifically:
\begin{equation}\label{estim:T:1}
\left\|T(\psi)\right\|_{L^2(\mathbb{T}^{dk})}\leq			\left\|\psi\right\|_{W^{1,\infty}(\mathbb{T}^{d})}\left\|f_{k}(t,\cdot)\right\|_{L^2(\mathbb{T}^{dk})}\,,\quad\forall\,\psi \in W^{1,\infty}\left(\mathbb{T}^{d}\right)\,.
		\end{equation}
On the other hand, we demonstrate that $T$ maps $L^2(\mathbb{T}^d)$ continuously onto $L^2(\mathbb{T}^{dk})$. To do this, we apply integration by parts with respect to $x_{k+1}$ in the definition of $T$, which leads to the following expression:
		\[
		T\left(\psi\right)\,=\,\int_{\mathbb{T}^d}\psi(x_i-x_{k+1})\,\nabla_{x_{k+1}}f_{k+1}(t,x_1,\dots,x_{k+1})\,\dD x_{k+1}\,.
		\]
		Next, we take the $L^2\left(\mathbb{T}^{dk}\right)$ norm of the above expression and apply the Cauchy-Schwarz inequality, yielding:
\begin{equation}\label{estim:T:2}		\left\|T(\psi)\right\|_{L^2(\mathbb{T}^{dk})}\leq
\left\|\psi\right\|_{L^2(\mathbb{T}^d)}\left\|\nabla_{x_{k+1}}f_{k+1}(t,\cdot)\right\|_{L^2(\mathbb{T}^{d(k+1)})}\,,\quad\forall\,\psi \in L^{2}\left(\mathbb{T}^{d}\right)\,.
\end{equation}
According to \eqref{estim:T:1}-\eqref{estim:T:2}, $T$ defines a bounded mapping from $L^2(\mathbb{T}^d) + W^{1,\infty}(\mathbb{T}^d)$ onto $L^2\left(\mathbb{T}^{dk}\right)$. Consequently, $T$ is also bounded on the space $\left(L^2(\mathbb{T}^d), W^{1,\infty}(\mathbb{T}^d)\right)_{1-\theta, 2/\theta}$, which is obtained through real interpolation with the index $\theta$ given in \eqref{K:1}. This can be shown using references such as \cite[Section 2.4]{Triebel83} or \cite[Theorem 3.1.2]{BerghLofstrom1976}, specifically indicating that:
\begin{equation*}
	\left\|T(\psi)\right\|_{L^2\left(\mathbb{T}^{dk}\right)}
	\leq
	\left\|\psi\right\|_{\left(L^2(\mathbb{T}^d),W^{1,\infty}(\mathbb{T}^{d})\right)_{1-\theta,2/\theta}}\left\|f_{k}(t,\cdot)\right\|^{1-\theta}_{L^2(\mathbb{T}^{dk})}\left\|\nabla_{x_{k+1}}f_{k+1}(t,\cdot)\right\|^{\theta}_{L^2(\mathbb{T}^{d(k+1)})}\,.
\end{equation*}
In this manner, we apply \eqref{prop:interp}, which guarantees that the interpolation space $\big(L^2(\mathbb{T}^d),$ $ W^{1,\infty}(\mathbb{T}^{d})\big)_{1-\theta, 2/\theta}$ corresponds to $W^{1-\theta, 2/\theta}(\mathbb{T}^{d})$, with equivalent norms. Therefore, for all $\psi \in W^{1-\theta, 2/\theta}(\mathbb{T}^{d})$, we have:
\begin{equation*}
\left\|T(\psi)\right\|_{L^2\left(\mathbb{T}^{dk}\right)}
\leq
C\left\|\psi\right\|_{W^{1-\theta,\frac{2}{\theta}}(\mathbb{T}^{d})}\left\|f_{k}(t,\cdot)\right\|^{1-\theta}_{L^2(\mathbb{T}^{dk})}\left\|\nabla_{x_{k+1}}f_{k+1}(t,\cdot)\right\|^{\theta}_{L^2(\mathbb{T}^{d(k+1)})}\,,
\end{equation*}
for some constant $ C $ that depends only on the dimension $ d $ and the size of the box $ \mathbb{T} $, where $ \theta $ is given in \eqref{K:1}. We evaluate the latter estimate with $ \psi = \phi_\delta $ (as defined in \eqref{phi:decomp}) and find: 
\begin{equation}\label{estim:T:3}
\left\|T(\phi_\delta)\right\|_{L^2\left(\mathbb{T}^{dk}\right)}
\leq
C\delta\left\|f_{k}(t,\cdot)\right\|^{1-\theta}_{L^2(\mathbb{T}^{dk})}\left\|\nabla_{x_{k+1}}f_{k+1}(t,\cdot)\right\|^{\theta}_{L^2(\mathbb{T}^{d(k+1)})}\,,
\end{equation}
for some constant $C>0$ depending only on $d$ and the size of the box $|\mathbb{T}|$. 

We now proceed to the second key point in our proof, which involves applying the Sobolev inequality \eqref{sob:ineq:eff} to estimate the $ L^2 $-norm of $ f_k(t) $ in \eqref{estim:T:3} using the norm of its gradient. First, we employ Jensen's inequality to bound the $ L^2 $-norm of $ f_k(t) $ by its $ L^{2^{\star}_k} $-norm, where $ 2^{\star}_k $ is defined below \eqref{sob:ineq:eff},
		\[
\|f_k(t,\cdot)\|_{L^2\left(\mathbb{T}^{dk}\right)}\leq\|f_k(t,\cdot)\|_{L^{2^{\star}_k}\left(\mathbb{T}^{dk}\right)}^{\frac{dk}{dk+2}}\,.
		\]
Then, we estimate the $ L^{2^{\star}_k} $-norm of $ f_k(t) $ on the right-hand side using \eqref{sob:ineq:eff}, resulting in the following expression:
\begin{equation*}
\|f_k(t,\cdot)\|_{L^2\left(\mathbb{T}^{dk}\right)}\leq C\left(\frac{1}{\sqrt{k}}
\left\|
\nabla_{X^k}
f_k(t,\cdot)
\right\|_{L^{2}(\mathbb{T}^{dk})}
+
\sqrt{k}
\left\|
f_k(t,\cdot)
\right\|_{L^{2}(\mathbb{T}^{dk})}\right)^{\frac{dk}{dk+2}},
		\end{equation*}
  for some constant $C$ depending only on the size of the box $|\mathbb{T}|$ and the dimension $d$. From this estimate, we can deduce that:
\begin{equation*}
\|f_k(t,\cdot)\|_{L^2\left(\mathbb{T}^{dk}\right)}\leq C\max{\left(\frac{1}{\sqrt{k}}
\left\|
\nabla_{X^k}
f_k(t,\cdot)
\right\|^{\frac{dk}{dk+2}}_{L^{2}(\mathbb{T}^{dk})}
\,,\,
\sqrt{k}
\left\|
f_k(t,\cdot)
\right\|^{\frac{dk}{dk+2}}_{L^{2}(\mathbb{T}^{dk})}\right)}.
		\end{equation*}
If the second constraint is satisfied in the latter relation, specifically that
$$
\left\|
f_k(t,\cdot)
\right\|_{L^{2}(\mathbb{T}^{dk})}\leq C\sqrt{k}
\left\|
f_k(t,\cdot)
\right\|_{L^{2}(\mathbb{T}^{dk})}^{\frac{dk}{dk+2}}\,,
$$
then, straightforward calculations yield $\left\|
f_k(t,\cdot)
\right\|_{L^{2}(\mathbb{T}^{dk})}
\leq
C^{\frac{dk+2}{2}}k^{\frac{dk+2}{4}}$.
Thus, we can bound the $ L^2 $-norm of $ f_k(t) $ within the $ \max $ in our previous estimate by $ C^{\frac{dk+2}{2}} k^{\frac{dk+2}{4}} $, leading to the following result:
\begin{equation*}
\|f_k(t,\cdot)\|_{L^2\left(\mathbb{T}^{dk}\right)}\leq C\max{\left(\frac{1}{\sqrt{k}}
\left\|
\nabla_{X^k}
f_k(t,\cdot)
\right\|^{\frac{dk}{dk+2}}_{L^{2}(\mathbb{T}^{dk})}
\,,\,
C^{\frac{dk}{2}}k^{\frac{dk+2}{4}}\right)}.
		\end{equation*}
By choosing $ C $ sufficiently large, while ensuring it depends solely on the size of the box $ |\mathbb{T}| $ and the dimension $ d $, we obtain:
\begin{equation}\label{Sob:ineq}
\|f_k(t,\cdot)\|_{L^2\left(\mathbb{T}^{dk}\right)}\leq \frac{C}{k^{\frac{1}{2}}}\max{\left(
\left\|
\nabla_{X^k}
f_k(t,\cdot)
\right\|^{\frac{dk}{dk+2}}_{L^{2}(\mathbb{T}^{dk})}
\,,\,
C^{k}k^{\frac{dk}{4}}\right)}.
		\end{equation}
We can estimate the norm of $ f_k(t) $ in \eqref{estim:T:3} using the previous inequality, leading to:
		\begin{equation*}
\left\|T(\phi_\delta)\right\|_{L^2(\mathbb{T}^{dk})}
			\leq
			\frac{C\delta}{k^{\frac{1-\theta}{2}}}
\max\hspace{-0.1cm}{\left(
\hspace{-0.1cm}\left\|
\nabla_{X^k}
f_k(t,\cdot)
\right\|^{\frac{dk(1-\theta)}{dk+2}}_{L^{2}(\mathbb{T}^{dk})}
,
C^{k(1-\theta)}k^{\frac{dk}{4}(1-\theta)}\hspace{-0.1cm}\right)}\hspace{-0.1cm}\left\|\nabla_{x_{k+1}}f_{k+1}(t,\cdot)\right\|^{\theta}_{L^2(\mathbb{T}^{dk})}.
		\end{equation*}
Next, we note that since $ \theta = \frac{2}{d+2} $, the following equality:
\[
\frac{dk(1-\theta)}{dk+2}
\,=\,
1-\theta- \frac{\theta}{k}
+\frac{2\theta}{k(dk+2)}\,
\]
holds, for all $k\geq2$.
Therefore, our previous estimate of $ T(\phi_\delta) $ can be rewritten as follows:
\begin{equation*}
	\begin{split}
	\Big\|T(&\phi_\delta)\Big\|_{L^2(\mathbb{T}^{dk})}\\
	&\leq
	\frac{C\delta}{k^{\frac{1-\theta}{2}}}
	\max{\left(
		\left\|
		\nabla_{X^k}
		f_k(t,\cdot)
		\right\|^{1-\theta- \frac{\theta}{k}
			+\frac{2\theta}{k(dk+2)}}_{L^{2}(\mathbb{T}^{dk})}
		\,,\,
		C^{k}k^{\frac{d k}{4}(1-\theta)}\right)}\left\|\nabla_{x_{k+1}}f_{k+1}(t,\cdot)\right\|^{\theta}_{L^2(\mathbb{T}^{d(k+1)})}\,.
	\end{split}
\end{equation*}
Furthermore, since the particles are indistinguishable according to \eqref{hyp:f0}, we obtain:
\[\left\|\nabla_{x_{k+1}}f_{k+1}(t,\cdot)\right\|_{L^2(\mathbb{T}^{d(k+1)})} =\frac{\left\|\nabla_{X^{k+1}}f_{k+1}(t,\cdot)\right\|_{L^2(\mathbb{T}^{d(k+1)})}}{\sqrt{k+1}}\,.\]
Substituting the previous relation into our earlier estimate for $ T(\phi_\delta) $ results in:
\begin{equation*}
\begin{split}
\Big\|T(\phi_\delta)&\Big\|_{L^2(\mathbb{T}^{dk})}\\
			&\leq 
   \frac{C\delta}{k^{\frac{1}{2}}}
\max{\Big(
\left\|
\nabla_{X^k}
f_k(t,\cdot)
\right\|^{1-\theta- \frac{\theta}{k}
+\frac{2\theta}{k(dk+2)}}_{L^{2}(\mathbb{T}^{dk})}
\,,\,
C^{k}k^{\frac{d k}{4}(1-\theta)}\Big)}\left\|
\nabla_{X^{k+1}}f_{k+1}(t,\cdot)\right\|^{\theta}_{L^2(\mathbb{T}^{d(k+1)})}.		
\end{split}
\end{equation*}
We can also estimate the $ L^2 $-norm of $ f_k(t) $ in \eqref{estim:T:0} using \eqref{Sob:ineq}, leading to the following result:
\begin{equation*}
\left\|T(\phi-\phi_\delta)\right\|_{L^2(\mathbb{T}^{dk})}\leq	\frac{C_\delta}{k^{\frac{1}{2}}}\max{\left(
\left\|
\nabla_{X^k}
f_k(t,\cdot)
\right\|^{\frac{dk}{dk+2}}_{L^{2}(\mathbb{T}^{dk})}
\,,\,
C^{k}k^{\frac{dk}{4}}\right)}\,.
		\end{equation*}
We can now estimate the right-hand side of \eqref{phi:decomp:2} using the two previous estimates: 
\[
\begin{split}
\Big(\int_{\mathbb{T}^{dk}}\Big|&\int_{\mathbb{T}^d} K(x_i- x_{k+1})f_{k+1}\left(t,X^{k+1}\right)\dD x_{k+1}\Big|^2\dD X^k\Big)^{\frac{1}{2}} \\
 \leq & \frac{C\delta}{k^{\frac{1}{2}}}\,
\max{\left(
\left\|
\nabla_{X^k}
f_k(t,\cdot)
\right\|^{1-\theta- \frac{\theta}{k}
+\frac{2\theta}{k(dk+2)}}_{L^{2}(\mathbb{T}^{dk})}
\,,\,
C^{k}k^{\frac{d k}{4}(1-\theta)}\right)}\left\|\nabla_{X^{k+1}}f_{k+1}(t,\cdot)\right\|^{\theta}_{L^2(\mathbb{T}^{d(k+1)})}\\
&+\frac{C_\delta}{k^{\frac{1}{2}}} \max{\left(
\left\|
\nabla_{X^k}
f_k(t,\cdot)
\right\|^{\frac{dk}{dk+2}}_{L^{2}(\mathbb{T}^{dk})}
\,,\,
C^{k}k^{\frac{dk}{4}}\right)}\, ,
\end{split}
\] 	
which conclude the proof.\end{proof}

\begin{remark}\label{case:K:Ld}
	The previous computations can be adapted to the case where $ K \in L^d(\mathbb{T}^d) $. To achieve this, we interpolate the operator $ T $ between $ W^{1,\infty} $ and $ W^{1,2} $, yielding the following results:
	\begin{equation*}
		\left\|T(\psi)\right\|_{L^2\left(\mathbb{T}^{dk}\right)}
		\leq
		C\left\|\psi\right\|_{W^{1,d}(\mathbb{T}^{d})}\left\|f_{k}(t,\cdot)\right\|^{1-\frac{2}{d}}_{L^2(\mathbb{T}^{dk})}\left\|f_{k+1}(t,\cdot)\right\|^{\frac{2}{d}}_{L^2(\mathbb{T}^{d(k+1)})}\,.
	\end{equation*} 
	We then apply the Sobolev inequality \eqref{Sob:ineq} to both $ f_k(t) $ and $ f_{k+1}(t) $ in the previous estimates, resulting in:
	\begin{align*}
		\left\|T(\psi)\right\|_{L^2\left(\mathbb{T}^{dk}\right)}
		\leq
		\frac{C}{k^{\frac{1}{2}}}\left\|\psi\right\|_{W^{1,d}(\mathbb{T}^{d})}&\max{\left(
			\left\|
			\nabla_{X^k}
			f_k(t,\cdot)
			\right\|^{\frac{(d-2)k}{dk+2}}_{L^{2}(\mathbb{T}^{dk})}
			\,,\,
			C^{k}k^{\frac{(d-2)k}{4}}\right)}\\
			\times &\max{\left(
				\left\|
				\nabla_{X^{k+1}}
				f_k(t,\cdot)
				\right\|^{\frac{2(k+1)}{d(k+1)+2}}_{L^{2}(\mathbb{T}^{d(k+1)})}
				\,,\,
				C^{k}k^{\frac{k}{2}}\right)}\,.
	\end{align*} 
	Like Lemma \ref{Lemma:hierarchy:term}, the recent estimate is homogeneous to $ f_k $ up to $ O(1/k^2) $. Specifically, for a chaotic marginal given by $ f_{k+1} = F^{\otimes (k+1)} $, the leading order in this estimate   satisfies:
	\[
	\begin{split}
	\|\nabla_{X^k} f_k(t,\cdot)\|_{L^{2}(\mathbb{T}^{dk})}^{\frac{(d-2)k}{dk+2}}&\left\|\nabla_{X^{k+1}}f_{k+1}(t,\cdot)\right\|^\frac{2(k+1)}{d(k+1)+2}_{L^2(\mathbb{T}^{d(k+1)})}\\
	&\lesssim
	\|F(t,\cdot)\|_{H^{1}(\mathbb{T}^{d(k+1)})}^{k+O\left(\frac{1}{k}\right)}
	\lesssim
	\|f_k(t,\cdot)\|_{H^{1}(\mathbb{T}^{dk})}^{1\,+\,O\left(\frac{1}{k^2}\right)}
	\,,\quad\textrm{as}\quad k\rightarrow \infty\, ,
	\end{split}
	\]
	for every $t>0$.
	
	This property enables us to carry out the same computations as in the proof of Theorem \ref{th:1} below for the case where $ K \in L^d\left(\mathbb{T}^d\right) $.
\end{remark}

In this second Lemma, we estimate the contribution of the interaction term in equation \eqref{BBGKY} that only involves the marginals $(f_{k})_{1\leq k \leq N}$, that is:
 \[
 K(x_i-x_{j})f_{k}(t,x_1,\dots,x_{k})\,,\quad (i,j)\in\left\{1,\cdots,k\right\}^2\,.
 \]
We address the contributions from the attractive and repulsive components of the kernel $ K $ (as outlined in equation \eqref{K:2}) separately.
	\begin{lemma}\label{Lemma:non:hierarchy:term}
    Under the assumptions \eqref{hyp:f0} regarding $(f^0_N)_{N\geq1}$ and conditions \eqref{K:2}-\eqref{K:+} concerning $K$, we consider the solutions $(f_N)_{N\geq1}$ to the Liouville equation \eqref{Liouville}. There exists a constant $C_\delta$, valid for all $\delta > 0$, such that the marginals $(f_{k,N})_{1\leq k\leq N}$ satisfy the following estimate:
\begin{align*}
\int_{\mathbb{T}^{dk}} K(x_i-x_{j})&f_{k}\left(t,X^{k}\right)\nabla_{x_i} f_{k}(t,X^k)\,\dD X^k\\
&\,\leq\,
\frac{C_\delta}{k}\max{\left(
	\left\|
	\nabla_{X^k}
	f_k(t,\cdot)
	\right\|^{\frac{2dk}{dk+2}}_{L^{2}(\mathbb{T}^{dk})}
	\,,\,
	C^{k}k^{\frac{dk}{2}}\right)}
+C\,\delta
\left\|
\nabla_{x_j}f_{k}(t,\cdot)
\right\|^2_{L^2(\mathbb{T}^{dk})}.
\end{align*}
for all times $t \geq 0$, all $N \geq 2$, and for any $k \in \{2, \ldots, N\}$, as well as for all pairs $(i,j) \in \{1, \ldots, k\}^2$ with $i \neq j$. The constant $C > 0$ depends solely on $d$ and $|\mathbb{T}|$, while the constant $C_\delta$ also depends on $K$ and $\delta$.
	\end{lemma}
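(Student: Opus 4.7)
I split $K=K_-+K_+$ according to \eqref{K:2} and estimate each contribution separately, using in both cases the Sobolev inequality \eqref{Sob:ineq} from Lemma \ref{Lemma:hierarchy:term} (or rather, from its proof) at the end to convert $\|f_k\|_{L^2(\mathbb{T}^{dk})}^2$ into the maximum with the $k^{-1}$ prefactor.

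For the repulsive piece I exploit the identity $f_k\nabla_{x_i}f_k=\tfrac12\nabla_{x_i}|f_k|^2$. Integration by parts in $x_i$ yields
\[
\int_{\mathbb{T}^{dk}} K_-(x_i-x_j)\,f_k\,\nabla_{x_i}f_k\,\dD X^k
\,=\,-\tfrac12\int_{\mathbb{T}^{dk}}\udiv_xK_-(x_i-x_j)\,|f_k|^2\,\dD X^k.
\]
Splitting $\udiv_xK_-=(\udiv_xK_-)_+-(\udiv_xK_-)_-$, the part with $(\udiv_xK_-)_+$ has the favorable sign and is dropped, while the remaining part is bounded by $\tfrac12\|(\udiv_xK_-)_-\|_{L^\infty}\|f_k\|_{L^2(\mathbb{T}^{dk})}^2$ thanks to \eqref{K:-}. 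This contribution is $\delta$-independent, so it will be absorbed into the $C_\delta/k$ term after applying \eqref{Sob:ineq}.

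For the attractive piece, set $r=\frac{2q}{q-2}$ so that $\tfrac1q+\tfrac1r+\tfrac12=1$; the constraint $q\ge d$ forces $r\le 2^{\star}=\frac{2d}{d-2}$, which is the crucial fact making the Sobolev embedding $H^1(\mathbb{T}^d)\hookrightarrow L^r(\mathbb{T}^d)$ available. Applying Hölder in $x_i$ alone (using translation invariance $\|K_+(\,\cdot\,-x_j)\|_{L^q_{x_i}}=\|K_+\|_{L^q}$) and then Cauchy-Schwarz in the remaining $k-1$ variables gives
\[
\left|\int_{\mathbb{T}^{dk}}K_+(x_i-x_j)\,f_k\,\nabla_{x_i}f_k\,\dD X^k\right|
\,\leq\,\|K_+\|_{L^q}\left(\int_{\mathbb{T}^{d(k-1)}}\|f_k\|_{L^r_{x_i}}^2\,\dD x_{(\neq i)}\right)^{1/2}\|\nabla_{x_i}f_k\|_{L^2(\mathbb{T}^{dk})}.
\]
A first Young inequality with parameter proportional to $\delta$ produces a $C\delta\|\nabla_{x_i}f_k\|_{L^2}^2$ term (which becomes $C\delta\|\nabla_{x_j}f_k\|_{L^2}^2$ by exchangeability \eqref{hyp:f0}) plus a $\delta^{-1}$-weighted version of the $L^r_{x_i}L^2_{x_{(\neq i)}}$ norm. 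To control the latter, I invoke Gagliardo-Nirenberg on $\mathbb{T}^d$ with interpolation exponent $\theta=d/q$ in the variable $x_i$, followed by Hölder in $x_{(\neq i)}$ with exponents $\frac{q}{q-d}$ and $\frac{q}{d}$, which yields
\[
\int_{\mathbb{T}^{d(k-1)}}\|f_k\|_{L^r_{x_i}}^2\,\dD x_{(\neq i)}
\,\leq\,C\Bigl(\|f_k\|_{L^2(\mathbb{T}^{dk})}^{2(1-d/q)}\|\nabla_{x_i}f_k\|_{L^2(\mathbb{T}^{dk})}^{2d/q}+\|f_k\|_{L^2(\mathbb{T}^{dk})}^2\Bigr).
\]
A second Young inequality with a small parameter $\eta$ turns the first summand into $\eta\|\nabla_{x_i}f_k\|_{L^2}^2+C_\eta\|f_k\|_{L^2}^2$. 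Tuning $\eta$ so that $\|K_+\|_{L^q}^2\eta/\delta$ fits inside the $C\delta\|\nabla_{x_j}f_k\|_{L^2}^2$ budget (hence $\eta\sim\delta^2$), everything reduces to a $C_\delta\|f_k\|_{L^2(\mathbb{T}^{dk})}^2$ contribution, which I finally estimate via \eqref{Sob:ineq} to produce the $\frac{C_\delta}{k}\max\bigl(\|\nabla_{X^k}f_k\|^{2dk/(dk+2)},C^kk^{dk/2}\bigr)$ term.

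The main obstacle is bookkeeping: I must choose the double Young-inequality parameters so that the gradient terms generated by the Gagliardo-Nirenberg step are absorbed into the $C\delta\|\nabla_{x_j}f_k\|_{L^2}^2$ budget without introducing constants that blow up with $k$, since only the Sobolev step is allowed to produce the $C^k$ dependence present in the statement. The constraint $r\le 2^{\star}$ provided by $q\ge d$ is exactly what keeps the Gagliardo-Nirenberg step dimension-uniform and available on $\mathbb{T}^d$.
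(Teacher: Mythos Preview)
Your treatment of the repulsive part $K_-$ is exactly the paper's argument, and your final application of \eqref{Sob:ineq} to convert $\|f_k\|_{L^2}^2$ into the $\tfrac{C_\delta}{k}\max(\cdots)$ term is also the same.

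The difference lies in how you handle $K_+$. The paper does \emph{not} rely on a Gagliardo--Nirenberg interpolation followed by a second Young inequality. Instead it uses the density of $L^\infty$ in $L^q$ to split $K_+=R_\delta+S_\delta$ with $R_\delta\in L^\infty$ and $\|S_\delta\|_{L^q}\le\delta$. The $R_\delta$ piece is crude ($L^\infty$ bound plus Young), contributing only to the $C_\delta\|f_k\|_{L^2}^2$ bucket. For the $S_\delta$ piece the paper applies H\"older in $x_i$ with the same triple $(q,r,2)$ as you do, then the Sobolev embedding $H^1(\mathbb T^d)\hookrightarrow L^r(\mathbb T^d)$, and finally a single Young inequality. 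The smallness of the gradient coefficient comes directly from $\|S_\delta\|_{L^q}\le\delta$, not from tuning a Young parameter.

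This is not merely cosmetic: your route has a genuine gap at the endpoint $q=d$ (allowed by \eqref{K:+} whenever $d\ge3$). In that case the Gagliardo--Nirenberg exponent is $\theta=d/q=1$, so there is no interpolation left---the inequality degenerates to the critical Sobolev embedding $\|f_k\|_{L^{2^\star}_{x_i}}\lesssim\|f_k\|_{H^1_{x_i}}$. Your ``second Young inequality with a small parameter $\eta$'' then has nothing to act on: the term $\|\nabla_{x_i}f_k\|_{L^2}^2$ appears with a coefficient of order $\|K_+\|_{L^d}^2/\delta$, which goes the wrong way in~$\delta$. For $q>d$ your argument is fine (with $C_\delta\sim\delta^{-1-2d/(q-d)}$), but the lemma as stated must also cover $q=d$, and there the density decomposition is what produces the required $C\delta$ in front of the gradient without any interpolation.
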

\begin{proof}
We fix $(k,N)\in\left(\N^\star\right)^2$ such that $2\leq k\leq N$, $(i,j)\in\left\{1,\dots,k\right\}^2$ such that $i\neq j$, and consider the marginal $f_{k}(t)$ of the solution $f_N$ to the Liouville equation \eqref{Liouville} at time $t\geq 0$. We first decompose the integral with respect to the attractive and repulsive components of $K$:
\[
\int_{\mathbb{T}^{dk}} K(x_i-x_{j})f_{k}\left(t,X^{k}\right)\nabla_{x_i} f_{k}(t,X^k)\,\dD X^k
\,=\,
\cK_- + \cK_+\,,
\]
where
\[
\left\{
\begin{array}{ll}
     &\ds \cK_-\,=\,\int_{\mathbb{T}^{dk}} K_-(x_i-x_{j})f_{k}\left(t,X^{k}\right)\nabla_{x_i} f_{k}(t,X^k)\,\dD X^k\,,\\[1.5em]
     &\ds \cK_+\,=\,\int_{\mathbb{T}^{dk}} K_+(x_i-x_{j})f_{k}\left(t,X^{k}\right)\nabla_{x_i} f_{k}(t,X^k)\,\dD X^k \,. 
\end{array}
\right.
\]

Assumption \eqref{K:-} provides a sufficient framework for estimating  $\cK_-$, corresponding to the repulsive contribution. In contrast, to estimate $\cK_+$, we rely on assumption \eqref{K:+} concerning $K_+$ in conjunction with Sobolev injections.

To proceed with the estimation of $\cK_-$, we utilize the identity:
\[
f_k \nabla_{x_i} f_k = \nabla_{x_i} \left| f_k \right|^2 / 2
\]
and perform integration by parts. This yields the following result: 
\[
\cK_-
=
-\frac{1}{2}
\int_{\mathbb{T}^{dk}} \udiv_{x_i} K_-(x_i-x_{j}) \left|f_{k}(t,X^k)\right|^2\dD X^k.
\]
Next, we apply assumption \eqref{K:-} regarding the repulsive part $ K_- $ of $ K $ to estimate the right-hand side of the previous relation. We obtain:
\[
\cK_-\,\leq\,
\frac{1}{2}
\left\|\left(\udiv_{x} K_-\right)_-\right\|_{L^\infty\left(\mathbb{T}^{d}\right)}
\left\|f_{k}(t,\cdot)\right\|_{L^2\left(\mathbb{T}^{dk}\right)}^2\,.
\]
We now focus on estimating $\cK_+$. For simplicity in our notation, we will perform our computations in the case where $i=k$, noting that other cases can be handled using the same approach. Our strategy involves isolating the small regions of $\mathbb{T}^d$ where the attractive component $K_+$ exhibits singular behavior, utilizing a density argument. Specifically, since $L^\infty\left(\mathbb{T}^d\right)$ is dense in $L^q\left(\mathbb{T}^d\right)$, for any $\delta>0$, we can find two vector fields, $R_\delta$ and $S_\delta$, such that
\begin{equation}\label{K:+:decomp}
K_+\,=\,R_\delta\,+\,S_\delta\,,\quad\textrm{with}\,\quad
\left\{
\begin{array}{ll}
     &\ds R_\delta\,\in\,L^{\infty}\left(
     \mathbb{T}^d\right)\,,\\[1.5em]
     &\ds S_\delta\,\in\,L^{q}\left(
     \mathbb{T}^d\right)\,,\quad \textrm{and}\quad 
     \left\|S_\delta\right\|_{L^{q}\left(
     \mathbb{T}^d\right)}\,\leq \,\delta\,,
\end{array}
\right.
\end{equation}
where $q$ is given in \eqref{K:+}. Therefore, the integral $\cK_+$ admits the following decomposition:
\begin{equation*}
\cK_+
\,=\,
\cR + \cS\,,
\quad
\textrm{where}
\quad
\left\{
\begin{array}{ll}
     &\ds \cR\,=\,\int_{\mathbb{T}^{dk}} R_\delta(x_k-x_{j})f_{k}\left(t,X^{k}\right)\nabla_{x_k} f_{k}(t,X^k)\,\dD X^k\\[1.5em]
     &\ds \cS\,=\,\int_{\mathbb{T}^{dk}} S_\delta(x_k-x_{j})f_{k}\left(t,X^{k}\right)\nabla_{x_k} f_{k}(t,X^k)\,\dD X^k 
\end{array}
\right.\;.
\end{equation*}
In this decomposition, $\mathcal{R}$ accounts for the contribution from the regular part of $K_+$ and is straightforward to estimate, while $\mathcal{S}$ encompasses the contribution from the singular part of $K_+$. 

To estimate $\mathcal{R}$, we first bound the integral by its absolute value, then take the supremum of $R_\delta$ over $\mathbb{T}^d$, and finally apply Young's inequality, resulting in:
\begin{equation}\label{estim:R}
|\cR|
\,\leq\,\frac{1}{2\,\delta}\left\|R_\delta\right\|^2_{L^{\infty}(\mathbb{T}^d)}\left\|f_{k}(t,\cdot)
\right\|_{L^2(\mathbb{T}^{dk})}^{2}
+
\frac{\delta}{2}
\left\|
\nabla_{x_k}f_{k}(t,\cdot) 
\right\|_{L^2(\mathbb{T}^{dk})}^{2}
\,.
\end{equation}
We will now estimate $\mathcal{S}$. To accomplish this, we define the exponent $r^\star$ as follows:
\[
\frac{1}{r^\star}+\frac{1}{2}+\frac{1}{q}\,=\,1\;,
\]
where $q$ is given in \eqref{K:+}. We have $r^\star>1$ since $q>2$. Hence, we can apply H\"older's inequality with respect to $x_k\in\T^d$, resulting in the following estimate: 
\begin{equation*}
|\cS|
\,\leq\,\left\|S_\delta\right\|_{L^{q}(\mathbb{T}^d)}
\int_{\mathbb{T}^{d(k-1)}}
\left\|
f_{k}\left(t,X^{k-1},\cdot\right)
\right\|_{L^{r^\star}(\mathbb{T}^d)}
\left\|
\nabla_{x_k}f_{k}\left(t,X^{k-1},\cdot\right)
\right\|_{L^2(\mathbb{T}^d)}
\dD X^{k-1}
\,. 
\end{equation*}
Furthermore, assumption \eqref{K:+} guarantees that $1/r^\star \geq 1/2 - 1/d$, since $q \geq d$, and that $2 < r^\star < \infty$ because $q > 2$. Thus, the Sobolev inequality on $\mathbb{T}^d$ (see \cite[Corollary 1.2]{Benyi_Oh13}) ensures that the $L^{r^\star}$-norm of $f_k$ in the previous estimate is controlled by its $H^1$-norm, as follows:
\begin{equation*}
|\cS|
\,\leq\,C\left\|S_\delta\right\|_{L^{q}(\mathbb{T}^d)}
\int_{\mathbb{T}^{d(k-1)}}
\left\|f_{k}\left(t,X^{k-1},\cdot\right)
\right\|_{H^{1}(\mathbb{T}^d)}
\left\|
\nabla_{x_k}f_{k}\left(t,X^{k-1},\cdot\right)
\right\|_{L^2(\mathbb{T}^d)}
\dD X^{k-1}
\,,
\end{equation*}
for some positive constant $C$ depending only on $d$ and $|\mathbb{T}|$. Applying Young's inequality to the latter integral gives the following estimate:
\begin{equation}\label{estim:S}
|\cS|
\,\leq\,C\left\|S_\delta\right\|_{L^{q}(\mathbb{T}^d)}\left(
\left\|f_{k}(t,\cdot)
\right\|^2_{L^2(\mathbb{T}^{dk})}
+
\left\|
\nabla_{x_k}f_{k}(t,\cdot)
\right\|^2_{L^2(\mathbb{T}^{dk})}\right)
\,.
\end{equation}
Next, we sum the estimates in \eqref{estim:R} and \eqref{estim:S} and bound the norms of $R_\delta$ and $S_\delta$ according to \eqref{K:+:decomp}. From this, we deduce the following bound for $\cK_+$:
\begin{equation*}|\cK_+|
\,\leq\,
C_\delta\left\|f_{k}(t,\cdot)
\right\|^2_{L^2(\mathbb{T}^{dk})}
+C\,\delta
\left\|
\nabla_{x_k}f_{k}(t,\cdot)
\right\|^2_{L^2(\mathbb{T}^{dk})}
\,,
\end{equation*}
for some positive constant $C$ depending only on $d$ and $|\mathbb{T}|$, while the constant $C_\delta$ also depends on $K_+$ and $\delta$. Hence, combining our estimates on $\cK_-$ and $\cK_+$, we obtain the following bound:
\[
\int_{\mathbb{T}^{dk}} K(x_i-x_{j})f_{k}\left(t,X^{k}\right)\nabla_{x_i} f_{k}(t,X^k)\dD X^k
\,\leq\,
C_\delta\left\|f_{k}(t,\cdot)
\right\|^2_{L^2(\mathbb{T}^{dk})}
+C\delta
\left\|
\nabla_{x_i}f_{k}(t,\cdot)
\right\|^2_{L^2(\mathbb{T}^{dk})}.
\]
To conclude our proof, we apply the Sobolev inequality \eqref{Sob:ineq} to bound the norm of $f_k$ in the final estimate. This leads to the desired result:
\begin{align*}
\int_{\mathbb{T}^{dk}} K(x_i-x_{j})&f_{k}\left(t,X^{k}\right)\nabla_{x_i} f_{k}(t,X^k)\,\dD X^k\\
&\,\leq\,
\frac{C_\delta}{k}\max{\left(
	\left\|
	\nabla_{X^k}
	f_k(t,\cdot)
	\right\|^{\frac{2dk}{dk+2}}_{L^{2}(\mathbb{T}^{dk})}
	\,,\,
	C^{k}k^{\frac{dk}{2}}\right)}
+C\,\delta
\left\|
\nabla_{x_i}f_{k}(t,\cdot)
\right\|^2_{L^2(\mathbb{T}^{dk})},
\end{align*}
completing the proof.
\end{proof}
To prove Theorem \ref{th:1}, we compile the results of Lemmas \ref{Lemma:hierarchy:term} and \ref{Lemma:non:hierarchy:term}. These results enable us to control the variations in the $ L^2 $-norm of $ (f_{k,N})_{1\leq k\leq N} $ arising from interactions between particles, alongside the dissipation attributed to the diffusion term on the right-hand side of \eqref{BBGKY}.

\begin{proof}[Proof of Theorem \ref{th:1}]
We fix $ t \geq 0 $ and $ (k,N) \in \left(\mathbb{N}^\star\right)^2 $ such that $ 2 \leq k \leq N $. To estimate the $ L^2 $-norm of $ f_k $ at time $ t $, we calculate its time derivative by multiplying equation \eqref{BBGKY} by $ f_k $ and integrating over $ \mathbb{T}^{dk} $. This results in the following expression:
 \begin{equation*}
\frac{1}{2}\frac{\dD}{\dD t}  \left\|f_k(t,\cdot)\right\|_{L^2(\mathbb{T}^{dk})}^2
\,=\;\cA\,+\,\cB\,+\,\cC\,,
 \end{equation*}
	where, following the same computations as in the proof of Theorem \ref{th:2} in Section \ref{sec:K:H-1}, $\cA$, $\cB$ and $\cC$ are given by
\[
\left\{
\begin{array}{ll}
	&\ds\cA\,=\,\frac{1}{N}\,\sum_{\substack{i,j=1\\ i\neq j}}^k\int_{\mathbb{T}^{dk}}   K(x_i-x_j)\cdot f_{k}(t,X^k)\nabla_{x_i} f_{k}(t,X^k)\,\dD X^k\,,  \\[1.em]
	&\ds\cB\,=\, \frac{N-k}{N} \sum_{i=1}^k \int_{\mathbb{T}^{dk}}\left( \int_{\mathbb{T}^d} K(x_i-x_{k+1}) f_{k+1}(t,X^{k+1})\, \dD x_{k+1}\right)  \cdot \nabla_{x_i} f_{k}(t,X^{k})\,\dD X^k \,,\\[1.5em]
	&\ds\cC\,=\,- \,\sigma\left\|\nabla_{X^k} f_k(t,\cdot)\right\|^2_{L^{2}\left(\mathbb{T}^{dk}\right)}\,\leq\,0\,. 
\end{array}
\right.
\] 
The main contribution arises from $ \cB $, as it depends on $ f_{k+1} $. We estimate this term using Lemma \ref{Lemma:hierarchy:term}. For the lower-order term $ \cA $, we apply Lemma \ref{Lemma:non:hierarchy:term}. Finally, $ \cC $ represents the contribution from diffusion in \eqref{BBGKY}. It has a signed contribution that we leverage to control both $ \cA $ and $ \cB $.

To estimate the primary contribution $ \cB $, we begin with the same calculations as in the proof of Theorem \ref{th:2} in Section \ref{sec:K:H-1}. When $ k = N $, we find that $ \cB = 0 $. However, for $ k \leq N-1 $, we can use estimate \eqref{estim:B:sec:theta}, which states that:
\begin{equation*}
	\cB\,\leq\,
	\frac{1}{k^{\frac{1}{2}}} \left\|\nabla_{X^k} f_{k}(t,\cdot) \right\|_{L^2\left(\mathbb{T}^{dk}\right)}\sum_{i=1}^k \left(\int_{\mathbb{T}^{dk}}\left| \int_{\mathbb{T}^d} K(x_i-x_{k+1}) f_{k+1}(t,X^{k+1})\, \dD x_{k+1}\right|^2\dD X^k\right)^{\frac{1}{2}}
	.
\end{equation*}
Then, we bound the convolution term on the right-hand side using Lemma \ref{Lemma:hierarchy:term}, which yields:
\[\begin{split}
	\cB\,\leq\,
	C\delta_1\,
	&\max{\left(
		\left\|
		\nabla_{X^k}
		f_k(t, \cdot)
		\right\|^{1-\theta- \frac{\theta}{k}
			+\frac{2\theta}{k(dk+2)}}_{L^{2}(\mathbb{T}^{dk})}
		\,,\,
		C^{k}k^{\frac{d k}{4}(1-\theta)}\right)} \\
		& \hspace{1,3cm}\times\left\|\nabla_{X^{k+1}}f_{k+1}(t,\cdot)\right\|^{\theta}_{L^{2}(\mathbb{T}^{d(k+1)})}
		\left\|
		\nabla_{X^k}
		f_k(t, \cdot)
		\right\|_{L^{2}(\mathbb{T}^{dk})}\\
	+\,C_{\delta_1}&\max{\left(
		\left\|
		\nabla_{X^k}
		f_k(t,\cdot)
		\right\|^{\frac{dk}{dk+2}}_{L^{2}(\mathbb{T}^{dk})}
		\,,\,
		C^{k}k^{\frac{dk}{4}}\right)}\left\|
		\nabla_{X^k}
		f_k(t, \cdot)
		\right\|_{L^{2}(\mathbb{T}^{dk})}\,,
\end{split} 
\]
for all $\delta_1 > 0$, where $\theta$ is specified in assumption \eqref{K:1}. The constant $C$ depends on $d$ and the size of the box $|\mathbb{T}|$, while $C_{\delta_1} > 0$ depends on $K$ and $\delta_1$. We apply Young's inequality to the last term on the right-hand side, obtainig:
\[
\begin{split}
	\cB\leq
	C\delta_1
	&\max{\left(
		\left\|
		\nabla_{X^k}
		f_k(t,\cdot)
		\right\|^{1-\theta- \frac{\theta}{k}
			+\frac{2\theta}{k(dk+2)}}_{L^{2}}
		,
		C^{k}k^{\frac{d k}{4}(1-\theta)}\right)}
		\\ & \hspace{1,3cm} \times \left\|\nabla_{X^{k+1}}f_{k+1}(t,\cdot)\right\|^{\theta}_{L^{2}}
	\left\|
	\nabla_{X^k}
	f_k(t,\cdot)
	\right\|_{L^{2}(\mathbb{T}^{dk})}\\
	+\,\frac{C_{\delta_1}}{\eta}&\max{\left(
		\left\|
		\nabla_{X^k}
		f_k(t,\cdot)
		\right\|^{\frac{2dk}{dk+2}}_{L^{2}}
		\,,\,
		C^{k}k^{\frac{dk}{2}}\right)}
		\,+\,\eta
		\left\|
	\nabla_{X^k}
	f_k(t,\cdot)
	\right\|_{L^{2}(\mathbb{T}^{dk})}^2\,,
\end{split} 
\]
for any $\eta>0$. We now estimate $\cA$ using Lemma \ref{Lemma:non:hierarchy:term}, which gives us:
\begin{align*}
	\cA \,\leq\,\frac{1}{N}\,\sum_{\substack{i,j=1\\ i\neq j}}^k\left(\frac{C_{\delta_2}}{k}\max{\left(
		\left\|
		\nabla_{X^k}
		f_k(t,\cdot)
		\right\|^{\frac{2dk}{dk+2}}_{L^{2}}
		\,,\,
		C^{k}k^{\frac{dk}{2}}\right)}
	+C\,\delta_2
	\left\|
	\nabla_{x_j}f_{k}(t,\cdot)
	\right\|^2_{L^2}\right),
\end{align*}
for all $\delta_2 > 0$ and for some positive constant $C > 0$ depending only on $d$ and $|\mathbb{T}|$, while $C_{\delta_2}$ also depends on $K$ and $\delta_2$. We explicitly compute the sum and utilize the fact that $k/N \leq 1$ to derive: 
\[
\begin{split}
	\cA \,\leq\,C_{\delta_2} \max{\left(
		\left\|
		\nabla_{X^k}
		f_k(t,\cdot)
		\right\|^{\frac{2dk}{dk+2}}_{L^{2}}
		\,,\,
		C^{k}k^{\frac{dk}{2}}\right)}
	 +C\,\delta_2
	\left\|
	\nabla_{X^k}f_{k}(t,\cdot)
	\right\|^2_{L^2}.
\end{split}
\]
Taking the sum between our estimates for $\cA$ and $\cB$, we obtain:
\[
\begin{split}
	\frac{1}{2}\frac{\dD}{\dD t}\Big\|f_k(t,\cdot)&\Big\|_{L^2(\mathbb{T}^{dk})}^2 \\
	\leq &\, C\delta_1\,
	\max{\left(
		\left\|
		\nabla_{X^k}
		f_k(t,\cdot)
		\right\|^{1-\theta- \frac{\theta}{k}
			+\frac{2\theta}{k(dk+2)}}_{L^{2}}
		\,,\,
		C^{k}k^{\frac{d k}{4}(1-\theta)}\right)}
		\\& \hspace{1cm}\times\left\|\nabla_{X^{k+1}}f_{k+1}(t,\cdot)\right\|^{\theta}_{L^{2}}
	\left\|
	\nabla_{X^k}
	f_k(t,\cdot)
	\right\|_{L^{2}(\mathbb{T}^{dk})}\\
	&+\left(\frac{C_{\delta_1}}{\eta}+C_{\delta_2}\right)\max{\left(
		\left\|
		\nabla_{X^k}
		f_k(t,\cdot)
		\right\|^{\frac{2dk}{dk+2}}_{L^{2}}
		\,,\,
		C^{k}k^{\frac{dk}{2}}\right)}
	\\& +\left(\eta+C\delta_2-\sigma\right)
	\left\|
	\nabla_{X^k}
	f_k(t,\cdot)
	\right\|_{L^{2}(\mathbb{T}^{dk})}^2\,.
\end{split}
\] 
We then decompose the resulting estimate as follows:
\[
\frac{1}{2}\frac{\dD}{\dD t}  \left\|f_k(t,\cdot)\right\|_{L^2(\mathbb{T}^{dk})}^2\,\leq\,\cD_1\,+\,\cD_2\,+\,\cD_3\,+\,\cD_4
\,+\left(\eta+C\delta_2-\sigma\right)
\left\|
\nabla_{X^k}
f_k(t,\cdot)
\right\|_{L^{2}(\mathbb{T}^{dk})}^2
\,,
\]
where $\cD_1$, $\cD_2$, $\cD_3$ and $\cD_4$ are defined as follows:
\[
\left\{
\begin{array}{ll}
     &\ds\cD_1\,=\,C\delta_1\,
     	\left\|
     	\nabla_{X^k}
     	f_k(t,\cdot)
     	\right\|^{2-\theta- \frac{\theta}{k}
     		+\frac{2\theta}{k(dk+2)}}_{L^{2}}
     	\left\|\nabla_{X^{k+1}}f_{k+1}(t,\cdot)\right\|^{\theta}_{L^{2}}
 \\[1.em]
 &\ds\cD_2\,=\,C\delta_1\,
 	C^{k}k^{\frac{d k}{4}(1-\theta)}
 	\left\|
 	\nabla_{X^k}
 	f_k(t,\cdot)
 	\right\|_{L^{2}(\mathbb{T}^{dk})}\left\|\nabla_{X^{k+1}}f_{k+1}(t,\cdot)\right\|^{\theta}_{L^{2}(\mathbb{T}^{d(k+1)})}
 \\[1.em]
 &\ds\cD_3\,=\,\left(\frac{C_{\delta_1}}{\eta}+C_{\delta_2}\right)
 	\left\|
 	\nabla_{X^k}
 	f_k(t,\cdot)
 	\right\|^{2 - \frac{2}{dk+2}}_{L^{2}(\mathbb{T}^{dk})}
 \\[1.em]
     &\ds\cD_4\,=\, \left(\frac{C_{\delta_1}}{\eta}+C_{\delta_2}\right)
     	C^{k}k^{\frac{dk}{4}}\left\|
     \nabla_{X^k}
     f_k(t,\cdot)
     \right\|_{L^{2}(\mathbb{T}^{dk})}
\end{array}
\right.
.
\]
The main contribution comes from $ \cD_1 $, while the terms $ \cD_j $ for $ j \geq 2 $ are lower-order contributions. To estimate $ \cD_1$, we apply Young's inequality with the following triplet of exponents:
\[
\frac{\theta d}{2(dk+2)}
\,+\,
\left(1-\frac{\theta}{2}- \frac{\theta}{2k}
+\frac{\theta}{k(dk+2)}\right)\,+\,\frac{\theta}{2}
\,=\,1\,.
\]
This results in the following estimate for $\cD_1$:
\[
\cD_1\,\leq
\left(
\frac{C\delta_1
}{\eta^{1-\frac{\theta}{2}- \frac{\theta}{2k}
	+\frac{\theta}{k(dk+2)}}
\eta^{\frac{\theta}{2}}_k
}
\right)^{\frac{2(dk+2)}{\theta d}}+
\eta
\|\nabla_{X^k} f_k(t,\cdot)\|_{L^{2}(\mathbb{T}^{dk})}^{2}
+
\eta_k
\left\|\nabla_{X^{k+1}}f_{k+1}(t,\cdot)\right\|^{2}_{L^2(\mathbb{T}^{d(k+1)})}\,,
\]
for all positive $\eta$ and $\eta_k$.
Simplifying the exponents in the previous estimate gives:
\[
\cD_1\,\leq\,
\frac{\left(C
\delta_1\right)^{\frac{2k}{\theta}}}{\eta^{(d+1)k+\frac{1}{1-\theta}}
\eta^{k+\frac{2}{d}}_k
}+\,
\eta\,
\|\nabla_{X^k} f_k(t,\cdot)\|_{L^{2}(\mathbb{T}^{dk})}^{2}
\,+\,
\eta_k\,
\left\|\nabla_{X^{k+1}}f_{k+1}(t,\cdot)\right\|^{2}_{L^2\mathbb{T}^{dk})}\,.
\]
We also apply Young inequality to estimate $\cD_2$, this time using the triplet of exponents:
\[
\frac{1-\theta}{2}+
\frac{1}{2} + \frac{\theta}{2}=1\,,
\]
which provides the following estimate for $\cD_2$:
\[
\cD_2\,\leq\,
\frac{\left(C
	\delta_1\right)^{\frac{2}{1-\theta}}}{\eta^{\frac{1}{1-\theta}}
	\eta^{\frac{\theta}{1-\theta}}_k
} k^{\frac{dk}{2}}+\,
\eta\,
\|\nabla_{X^k} f_k(t,\cdot)\|_{L^{2}(\mathbb{T}^{dk})}^{2}
\,+\,
\eta_k\,
\left\|\nabla_{X^{k+1}}f_{k+1}(t,\cdot)\right\|^{2}_{L^2}\,.
\]
In order to evaluate $\cD_3$, we apply Young's inequality using the following pair of exponents:
\[
\frac{1}{dk+2}
\,+\,
\left(1-\frac{1}{dk+2}\right)
\,=\,1\,.
\]
This results in:
\[
\cD_3\,\leq\,
\frac{\left(C_{\delta_1}+\eta C_{\delta_2}\right)^{dk+2}}{\eta^{2dk+3}}+\,
\eta\,
\|\nabla_{X^k} f_k(t,\cdot)\|_{L^{2}(\mathbb{T}^{dk})}^{2}\,.
\]
Likewise, we utilize Young's inequality to assess $\cD_4$, leading to:
\[
\cD_4\,\leq\,
\frac{1}{\eta}\left(\frac{C_{\delta_1}}{\eta}+C_{\delta_2}\right)^2
C^{k}k^{\frac{dk}{2}}
\,+\,
\eta\left\|
\nabla_{X^k}
f_k(t,\cdot)
\right\|_{L^{2}(\mathbb{T}^{dk})}^2\,.
\]
After summing the estimates for $\cD_j$ with $1 \leq j \leq 4$, we obtain:
\begin{align*}
\frac{1}{2}\frac{\dD}{\dD t} & \left\|f_k(t,\cdot)\right\|_{L^2(\mathbb{T}^{dk})}^2\leq\,\\&
\frac{\left(C
\delta_1\right)^{\frac{2k}{\theta}}}{\eta^{(d+1)k+\frac{1}{1-\theta}}
\eta^{k+\frac{2}{d}}_k}
\,+\,
\frac{\left(C
\delta_1\right)^{\frac{2}{1-\theta}}}{\eta^{\frac{1}{1-\theta}}
\eta^{\frac{\theta}{1-\theta}}_k
} k^{\frac{dk}{2}}
\,+\,
\frac{\left(C_{\delta_1}+\eta C_{\delta_2}\right)^{dk+2}}{\eta^{2dk+3}}
\,+\,
\frac{1}{\eta}\left(\frac{C_{\delta_1}}{\eta}+C_{\delta_2}\right)^2
C^{k}k^{\frac{dk}{2}}\\[0.8em]
\,+\,&
\left(5\eta+C\delta_2-\sigma\right)
\left\|
\nabla_{X^k}
f_k(t,\cdot)
\right\|_{L^{2}(\mathbb{T}^{dk})}^2
\,+\,
4\,\eta_k\,
\left\|\nabla_{X^{k+1}}f_{k+1}(t,\cdot)\right\|^{2}_{L^2}\,.
\end{align*}
Next, we fix $\eta$ and $\delta_2$ such that $5\eta=C\delta_2 = \sigma/4$, given rise to:
\begin{align*}
	\frac{1}{2}\frac{\dD}{\dD t}  \left\|f_k(t,\cdot)\right\|_{L^2(\mathbb{T}^{dk})}^2\leq\,&\ds
	\frac{C^k
		\delta_1^{\frac{2k}{\theta}}}{
		\eta^{k+\frac{2}{d}}_k}
	+\,
	\frac{C
		\delta_1^{\frac{2}{1-\theta}}}{
		\eta^{\frac{\theta}{1-\theta}}_k
	} k^{\frac{dk}{2}}
	+\,
	C_{\delta_1}^{dk+2}
	+\,
	C_{\delta_1}^2
	C^{k}k^{\frac{dk}{2}}\\
	& +\,\ds
	4\,\eta_k
	\left\|\nabla_{X^{k+1}}f_{k+1}(t,\cdot)\right\|^{2}_{L^2}
	-\,\frac{\sigma}{2}
	\left\|
	\nabla_{X^k}
	f_k(t,\cdot)
	\right\|_{L^{2}(\mathbb{T}^{dk})}^2\,.
\end{align*}
Our strategy is to compensate for the higher-order norm of $ f_{k+1}(t) $ in the previous estimate with the dissipation term due to diffusion. To achieve this, we divide the estimate by $ k^{\alpha k} $, for some $ \alpha > 0 $ that will be determined later, and then sum over $ k $ from $ 2 $ to $ N $. After re-indexing, this produces:
\begin{align*}
\frac{1}{2}\frac{\dD}{\dD t} 
\sum_{k=2}^{N} \frac{\left\|f_k(t,\cdot)\right\|_{L^2(\mathbb{T}^{dk})}^2}{k^{\alpha k}}
\,\leq\,
&\sum_{k=2}^{N} 
k^{-\alpha k }\left(
\frac{C^k
	\delta_1^{\frac{2k}{\theta}}}{
	\eta^{k+\frac{2}{d}}_k}
+\,
\frac{C
	\delta_1^{\frac{2}{1-\theta}}}{
	\eta^{\frac{\theta}{1-\theta}}_k
} k^{\frac{dk}{2}}
+\,
C_{\delta_1}^{dk+2}
+\,
C_{\delta_1}^2
C^{k}k^{\frac{dk}{2}}\right)\\
& \,+\,
\sum_{k=2}^{N} 
\left(\frac{k^{\alpha k} }{(k-1)^{\alpha(k-1)}}\eta_{k-1} - \frac{\sigma}{2}\right)
\frac{\left\|\nabla_{X^{k}}f_k(t,\cdot)\right\|_{L^2(\mathbb{T}^{dk})}^2}{k^{\alpha k}}
\,.
 \end{align*}
We set $ \eta_k = \frac{\sigma}{4 e^{2\alpha} k^\alpha} $ and bound the second sum on the right-hand side using the following estimate: 
 \[
 \frac{k^{\alpha k} }{(k-1)^{\alpha(k-1)}}\,=\,(k-1)^{\alpha}\left(\frac{k}{k-1}\right)^{\alpha k}\,\leq\,
 (k-1)^{\alpha}e^{2\alpha}\,.
 \]
We then have:
\begin{align*}
\frac{1}{2}\frac{\dD}{\dD t} 
\sum_{k=2}^{N} \frac{\left\|f_k(t,\cdot)\right\|_{L^2(\mathbb{T}^{dk})}^2}{k^{\alpha k}}
\leq
&\sum_{k=2}^{N} 
C_\alpha^k\delta_1^{\frac{2}{\theta}k}
k^{\frac{2\alpha}{d}}
+
\left(C_\alpha
	\delta_1^{\frac{2}{1-\theta}}
	+
	C_{\delta_1}^2
	C^{k}
	k^{\frac{\alpha\theta}{1-\theta}}
	\right)
k^{k\left(\frac{d}{2}-\alpha\right)}
+
k^{-\alpha k }
C_{\delta_1}^{dk+2}\\
& \,-\,
\frac{\sigma}{4}
\sum_{k=2}^{N} \frac{\left\|\nabla_{X^k} f_k(t,\cdot)\right\|_{L^2(\mathbb{T}^{dk})}^2}{k^{\alpha k}}
\,.
 \end{align*}
Now, we impose the constraint $\alpha > d/2$ to ensure that the second and third terms in the first sum on the right-hand side lead to a convergent series. We find: 
\begin{equation*}
	\frac{1}{2}\frac{\dD}{\dD t} 
	\sum_{k=2}^{N} \frac{\left\|f_k(t,\cdot)\right\|_{L^2(\mathbb{T}^{dk})}^2}{k^{\alpha k}}
	\,\leq\,
	C_{\alpha,\delta_1}+
	\sum_{k=2}^{N} 
	C_\alpha^k\delta_1^{\frac{2}{\theta}k}
	k^{\frac{2\alpha}{d}}
	\,-\,
	\frac{\sigma}{4}
	\sum_{k=2}^{N} \frac{\left\|\nabla_{X^k} f_k(t,\cdot)\right\|_{L^2(\mathbb{T}^{dk})}^2}{k^{\alpha k}}
	\,,
\end{equation*} 
for some constant $C_{\alpha,\delta_1} > 0$ that depends on our choice of $\alpha > d/2$ and $\delta_1 > 0$. To conclude, we set $\delta_1$ such that $C_\alpha \delta_1^{\frac{2}{\theta}} < 1$, ensuring that the first sum also results in a convergent series. We obtain:
\begin{equation*}
\frac{1}{2}\frac{\dD}{\dD t} 
\sum_{k=2}^{N} \frac{\left\|f_k(t,\cdot)\right\|_{L^2(\mathbb{T}^{dk})}^2}{k^{\alpha k}}
\,\leq\,
C_\alpha
\,-\,
\frac{\sigma}{4}
\sum_{k=2}^{N} \frac{\left\|\nabla_{X^k} f_k(t,\cdot)\right\|_{L^2(\mathbb{T}^{dk})}^2}{k^{\alpha k}}
\,,
 \end{equation*}
for some constant $C_\alpha > 0$ that depends on $d$, $|\mathbb{T}|$, $K$, $\sigma$, and $\alpha$. We can lower bound the sum on the right-hand side, utilizing the Poincar\'e inequality on $\mathbb{T}^{dk}$, which guarantees that:
\begin{equation*}
\begin{split}
	\left\|f_k(t,\cdot)\right\|_{L^2(\mathbb{T}^{dk})}^2
	-\left(\int_{\mathbb{T}^{dk}} f_k(t,X^k)\,\dD X^k\right)^2
	&=
	\left\|f_k(t,\cdot)
	-\int_{\mathbb{T}^{dk}} f_k(t,X^k)\dD X^k
	\right\|_{L^2(\mathbb{T}^{dk})}^2\\
	&\leq
	\left\|\nabla_{X^k} f_k(t,\cdot)\right\|_{L^2(\mathbb{T}^{dk})}^2.
\end{split}
\end{equation*}

Since $\ds\int_{\mathbb{T}^{dk}} f_k(t,X^k)\,\dD X^k\,=\,1$, we find:
\begin{equation*}
\frac{1}{2}\frac{\dD}{\dD t} 
\sum_{k=2}^{N} \frac{\left\|f_k(t,\cdot)\right\|_{L^2(\mathbb{T}^{dk})}^2}{k^{\alpha k}}
\,\leq\,
C_\alpha
\,-\,
\frac{\sigma}{4}
\sum_{k=2}^{N} \frac{\left\|f_k(t,\cdot)\right\|_{L^2(\mathbb{T}^{dk})}^2}{k^{\alpha k}}
\,.
 \end{equation*}
We then multiply the preceding estimate by $ e^{\sigma t / 2} $ and integrate with respect to $ t \geq 0 $. This results in:
 \begin{equation*}
\sum_{k=2}^{N} \frac{\left\|f_k(t,\cdot)\right\|_{L^2(\mathbb{T}^{dk})}^2}{k^{\alpha k}}
\,\leq\,
e^{-\frac{\sigma t }{2}}
\sum_{k=2}^{N} \frac{\left\|f_k^0\right\|_{L^2(\mathbb{T}^{dk})}^2}{k^{\alpha k}}
+
\left(1-e^{-\frac{\sigma t }{2}}\right)C_\alpha
\,.
 \end{equation*}
To conclude our proof, we select an $\alpha$ such that $\alpha > \max(2\beta, d/2)$, where $\beta$ is specified in \eqref{hyp:f0:growth}. This choice ensures that the sum on the right-hand side remains uniformly bounded as $N \to +\infty$. This leads to:
\begin{equation*}
\sum_{k=2}^{N} \frac{\left\|f_k(t,\cdot)\right\|_{L^2(\mathbb{T}^{dk})}^2}{k^{\alpha k}}
\,\leq\,C\,,\quad \forall\, t \geq 0
\,,
 \end{equation*}
for some constant $C$ depending on $d$, $|\mathbb{T}|$, $K$, $\sigma$, $\alpha$ and the implicit constant in \eqref{hyp:f0:growth}. We can easily derive the expected result from the preceding estimate:
 \begin{equation*}
\sup_{t\in \R^+} 
\sup_{2\leq N}\sup_{ k\leq  N}
\frac{\left\|f_{k,N}(t,\cdot)\right\|_{L^2(\mathbb{T}^{dk})}
}{k^{\Tilde{\alpha}k}}
\,\leq\,C\,,
 \end{equation*}
 where $\Tilde{\alpha} = \alpha/2 > \max\left(\beta,d/4\right)$, 
for some constant $C$ depending on $d$, $|\mathbb{T}|$, $K$, $\sigma$, $\Tilde{\alpha}$ and the implicit constant in \eqref{hyp:f0:growth}.
\end{proof}

\section{Uniform in time propagation of chaos}\label{sec:4}
In this section, we demonstrate uniform in time propagation of chaos as defined in \eqref{DefinitionPropagationOfChaos} for the particle system described by \eqref{System}. Specifically, we establish quantitative decay rates in both $N$ and $t \geq 0$, which ensure that the marginals $f_{k,N}$ converge to the tensorized limit $\bar{f}^{\otimes k}$ in $L^p(\mathbb{T}^{dk})$ for $1 \leq p < 2$. This convergence occurs simultaneously as $N \to +\infty$ and $t \to +\infty$. The main steps in the proof outlined in this section are summarized as follows:

$(i)$ The first key point is that for divergence-free kernels $K$, the marginal $f_{k,N}$ and the tensorized limit $\bar{f}^{\otimes k}$ converge to the same stationary state in $L^1$ as $t \to +\infty$. Specifically, we establish this result in Lemma \ref{EntropyEstimatesLemma} below:
\[
f_{k,N}(t) = \bar{f}^{\otimes k}(t) + O\left(e^{-Ct}\right)\,,\quad\textrm{in}\quad L^1\left(\mathbb{T}^{dk}\right)\,,\quad\textrm{as}\quad t,N\;\longrightarrow +\infty\,;
\]

$(ii)$ Next, we combine the aforementioned estimate with \cite[Theorem 1]{JabinWang2018}, which guarantees propagation of chaos with exponential growth, specifically:
\[
f_{k,N}(t) = \bar{f}^{\otimes k}(t) + O\left(\frac{e^{Ct}}{\sqrt{N}}\right)\,,\quad\textrm{in}\quad L^1\left(\mathbb{T}^{dk}\right)\,,\quad\textrm{as}\quad t,N\;\longrightarrow +\infty\,.
\]
The combination of these two results leads to the simultaneous convergence of $ f_{k,N} $ toward $ \bar{f}^{\otimes k} $ in $ L^1 $:
\[
f_{k,N}(t) = \bar{f}^{\otimes k}(t) + O\left(\frac{e^{-Ct}}{N^{\alpha}}\right)\,,\quad\textrm{in}\quad L^1\left(\mathbb{T}^{dk}\right)\,,\quad\textrm{as}\quad t,N\;\longrightarrow +\infty\,,
\]
for some $\alpha>0$ ;

$(iii)$ We then interpolate the previous estimate with our uniform $ L^2 $-estimates obtained in Theorems \ref{th:1} and \ref{th:2}, which enhances the $ L^1 $ convergence into $ L^p $ convergence for all $ 1 \leq p < 2 $.

For divergence-free kernels $ K $, the marginals $ f_{k,N} $ and the tensorized limit $ \bar{f}^{\otimes k} $ converge to the same stationary state as $ t \rightarrow +\infty $. This stationary state is represented by $ f^{\otimes k}_{\infty} $, where $ f_\infty $ is the uniform probability distribution over $ \mathbb{T}^{d} $. This  result is the subject of the following lemma, whose proof is provided for completeness, as it relies on a classical entropy estimate. 
\begin{lemma}\label{EntropyEstimatesLemma}
    Assume that the interaction kernel satisfies $\udiv_x(K) = 0$. Under the assumptions \eqref{hyp:f0} and \eqref{hyp:chaos} for $(f^0_N)_{N \geq 1}$ and the assumption \eqref{f0Regularity} on $\bar{f}^0$, consider the solutions $(f_N)_{N \geq 1}$ to the Liouville equation \eqref{Liouville} with initial conditions $(f^0_N)_{N \geq 1}$, as well as the solution $\bar{f}$ to the limiting equation \eqref{VlasovFokkerPlanck} with initial condition $\bar{f}^0$. For all $N \geq 2$ and all time $t \geq 0$, the marginals $(f_{k,N})_{1 \leq k \leq N}$ satisfy:
\begin{equation*}
  \|f_{k,N}(t,\cdot)-\bar{f}^{\otimes k}(t,\cdot)\|_{L^1({\mathbb{T}^{dk}})}\leq C\sqrt{2\,k}\,e^{-\frac{4\pi^2\sigma}{|\mathbb{T}|^2} t}\,,
\end{equation*}
for some constant $C$ depending on $\bar{f}^0$ and the implicit constant in assumption \eqref{hyp:chaos}.
\end{lemma}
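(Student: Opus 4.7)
The plan is to exploit the fact that, since $\udiv_x K = 0$, the unique stationary state of both the Liouville equation \eqref{Liouville} and the mean-field equation \eqref{VlasovFokkerPlanck} is the uniform probability density on $\mathbb{T}^{dk}$, which I denote $u_k := |\mathbb{T}|^{-dk}$. The triangle inequality gives
\[
\|f_{k,N}(t,\cdot) - \bar{f}^{\otimes k}(t,\cdot)\|_{L^1(\mathbb{T}^{dk})} \,\leq\, \|f_{k,N}(t,\cdot) - u_k\|_{L^1(\mathbb{T}^{dk})} \,+\, \|\bar{f}^{\otimes k}(t,\cdot) - u_k\|_{L^1(\mathbb{T}^{dk})},
\]
so it suffices to control each piece by an exponentially decaying quantity of size proportional to $\sqrt{k}$.

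The first step is a standard entropy dissipation. Multiplying \eqref{Liouville} by $1+\log(f_N|\mathbb{T}|^{dN})$ and integrating by parts, the drift terms reduce to $N^{-1}\sum_{i\neq j} \int K(x_i-x_j)\cdot \nabla_{x_i} f_N\,\dD X^N$, which vanishes after a further integration by parts using $\udiv_x K=0$. The same argument applies to \eqref{VlasovFokkerPlanck}, since $K\star \bar{f}$ inherits divergence-freeness. This yields the identities
\[
\frac{\dD}{\dD t} H_N(f_N(t,\cdot)|u_N) \,=\, -\sigma\, I_N(f_N(t,\cdot)|u_N), \qquad \frac{\dD}{\dD t} H_1(\bar{f}(t,\cdot)|u_1) \,=\, -\sigma\, I_1(\bar{f}(t,\cdot)|u_1),
\]
where $H$ and $I$ denote unnormalized relative entropy and Fisher information. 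The log-Sobolev inequality for the uniform measure on $\mathbb{T}^{dN}$, which by tensorization has the same constant as the log-Sobolev inequality on $\mathbb{T}$, namely $|\mathbb{T}|^2/(8\pi^2)$, then yields exponential decay at rate $\lambda := 8\pi^2\sigma/|\mathbb{T}|^2$ of both $H_N(f_N(t,\cdot)|u_N)$ and $H_1(\bar{f}(t,\cdot)|u_1)$ via Gr\"onwall.

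To pass from the full entropy to a bound on the $k$-th marginal, I invoke the Hauray-Mischler subadditivity inequality $H_k(f_{k,N}|u_k) \leq C\,(k/N)\, H_N(f_N|u_N)$, valid because $f_N$ is exchangeable by \eqref{hyp:f0} and the reference $u_N$ is a product. Combined with the crude splitting
\[
H_N(f_N^0|u_N) \,\leq\, H_N\bigl(f_N^0 \bigm\vert (\bar{f}^0)^{\otimes N}\bigr) + N\, H_1(\bar{f}^0|u_1) \,\leq\, C + N\,H_1(\bar{f}^0|u_1),
\]
coming from \eqref{hyp:chaos} and \eqref{f0Regularity}, this produces $H_k(f_{k,N}(t,\cdot)|u_k) \leq C\, k\, e^{-\lambda t}$ uniformly in $N\geq k$. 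For the tensorized limit, entropy tensorizes exactly: $H_k(\bar{f}^{\otimes k}(t,\cdot)|u_k) = k\,H_1(\bar{f}(t,\cdot)|u_1) \leq C\,k\,e^{-\lambda t}$. Applying Csisz\'ar-Kullback-Pinsker $\|f-g\|_{L^1}^2 \leq 2 H(f|g)$ to both bounds and invoking the triangle inequality concludes the proof, the square root turning $\lambda$ into $4\pi^2\sigma/|\mathbb{T}|^2$ and producing the $\sqrt{2k}$ prefactor.

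The only subtle point is the combination of the entropy subadditivity with the initial bound, which is what prevents the constant from depending on $N$ and forces it to scale like $\sqrt{k}$ rather than being uniform in $k$; otherwise the argument is a classical entropy computation. The regularity needed to justify the time differentiation of $H_N$ for weak solutions is supplied by the well-posedness results of Appendix \ref{App:WP}.
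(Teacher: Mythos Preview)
Your argument is essentially the paper's own proof: triangle inequality through the uniform state, entropy dissipation with the drift vanishing by $\udiv_x K=0$, log-Sobolev on the torus, subadditivity of entropy for exchangeable laws against a tensorized reference, and Csisz\'ar--Kullback--Pinsker. One small imprecision: the ``crude splitting'' $H_N(f_N^0|u_N) \leq H_N(f_N^0|(\bar f^0)^{\otimes N}) + N H_1(\bar f^0|u_1)$ is not literally true, since the cross term equals $N\int f_{1,N}^0 \log(\bar f^0/u_1)$ rather than $N\int \bar f^0 \log(\bar f^0/u_1)$; the paper bounds it instead by $N\|\log(\bar f^0/u_1)\|_{L^\infty}$ via \eqref{f0Regularity}, which gives the same $O(N)$ conclusion and leaves your argument intact.
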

\begin{proof} We fix $(k,N)\in\left(\N^\star\right)^2$ such that $N\geq 2$ and $1\leq k\leq N$. To estimate the distance between $f_{k,N}(t)$ and $\bar{f}^{\otimes k}(t)$, we decompose it as follows:
\begin{equation*}
	\|f_{k,N}(t,\cdot)-\bar{f}^{\otimes k}(t,\cdot)\|_{L^1({\mathbb{T}^{dk}})}\,\leq\,
	\|f_{k,N}(t,\cdot)-f^{\otimes k}_{\infty}(t,\cdot)\|_{L^1({\mathbb{T}^{dk}})}
	\,+\,
	\|f^{\otimes k}_{\infty}(t,\cdot)-\bar{f}^{\otimes k}(t,\cdot)\|_{L^1({\mathbb{T}^{dk}})}\,,
\end{equation*}
for all $t\geq 0$. We estimate each term on the right hand side separately, starting with $\|f_{k,N}(t,\cdot)-f^{\otimes k}_{\infty}(t,\cdot)\|_{L^1({\mathbb{T}^{dk}})}$.

First, we derive a classical relative entropy estimate that ensures the solution $f_N$ to the Liouville equation \eqref{Liouville} relaxes exponentially toward $f_\infty^{\otimes N}$. To accomplish this, we multiply equation \eqref{Liouville} by $\ln\left(\frac{f_N}{f_\infty^{\otimes N}}\right)$, integrate over $\mathbb{T}^{dN}$, and apply mass conservation for the solution to \eqref{Liouville}. This yields the following result:
\begin{equation*}
\frac{\dD}{\dD t}\, \mathcal{H}_N(f_N(t)|f^{\otimes N}_{\infty})\,=\,
\cA\,+\,\cB\,,
\end{equation*}
for all $t \geq 0$, where $\cH_N$ is defined below \eqref{hyp:chaos}, and $\cA$ and $\cB$ are given as follows:
\[
\left\{
\begin{array}{ll}
	&\ds\cA\,=\,\frac{1}{N}\,\sum_{\substack{i,j=1\\ i\neq j}}^N\int_{\mathbb{T}^{dN}}   K(x_i-x_j)\cdot \nabla_{x_i}f_{N}(t,X^N) \log{\left(\frac{f_{N}(t,X^N)}{f_\infty^{\otimes N}(X^N)}\right)}\dD X^N\,,  \\[1.em]
	&\ds\cB\,=\,\frac{\sigma }{N}\sum_{i=1}^N \int \Delta_{x_i}f_N(t,X^N)\log(f_N(t,X^N))\,\dD X^N\,. 
\end{array}
\right.
\] 
We note that $\mathcal{A}$ vanishes due to the divergence-free assumption on $K$. Specifically, by employing the relation:
\[
\nabla_{x_i} \left( f_{N} \log\left(\frac{f_{N}}{f_\infty^{\otimes N}}\right) \right) = \nabla_{x_i}\left( f_{N} \log{(f_{N})} - f_{N} \right),
\]
and integrating by parts with respect to $x_i$ in $\mathcal{A}$, we obtain:
\[
\cA\,=\,\frac{1}{2N}\,\sum_{\substack{i,j=1\\ i\neq j}}^N\int_{\mathbb{T}^{dN}}   K(x_i-x_j)\cdot \nabla_{x_i}\left(f_{N}\log{(f_{N})}-f_{N}\right)(t,X^N) \,\dD X^N\,=\,0\,.
\]
Next, we integrate by parts in $\mathcal{B}$, which leads to the following result:
\begin{equation*}
        \frac{\dD}{\dD t}\, \mathcal{H}_N(f_N(t)|f^{\otimes N}_{\infty})=-\frac{\sigma}{N} \int_{\mathbb{T}^{dN}} \frac{|\nabla_{X^N} f_N(t,X^N)|^2}{f_N(t,X^N)}\,\dD X^N.
\end{equation*}
We can lower bound the right-hand side of the latter inequality using the logarithmic Sobolev inequality associated with the probability measure $f_N$. This inequality states that (for the proof in dimension 1, see \cite[Proposition 5.7.5]{Bakry2013}, and for higher dimensions, refer to \cite[Proposition 5.2.7]{Bakry2013}):
\begin{equation*}
     \mathcal{H}_N(f_N(t)|f^{\otimes N}_{\infty})\leq \frac{|\mathbb{T}|^2}{8\pi^2}\frac{1}{N}\int_{\mathbb{T}^{dN}} \frac{|\nabla_{X^N} f_N(t,X^N)|^2}{f_N(t,X^N)}\,\dD X^N\, .
\end{equation*}
This leads to the following differential inequality:
\begin{equation*}
        \frac{\dD}{\dD t} \,\mathcal{H}_N(f_N(t)|f^{\otimes N}_{\infty})\,\leq\, -\frac{8\pi^2}{|\mathbb{T}|^2}\sigma\,\mathcal{H}_N(f_N(t)|f^{\otimes N}_{\infty})\,.
\end{equation*}
We multiply the latter inequality by $ e^{\frac{8\pi^2\sigma}{|\mathbb{T}|^2} t} $ and integrate over the interval $[0, t]$. This leads us to conclude that the relative entropy of $ f_N $ decays exponentially as $ t \rightarrow +\infty $, specifically:
\begin{equation*}
	\mathcal{H}_N(f_N(t)|f^{\otimes N}_{\infty})\leq \mathcal{H}_N(f^0_N|f^{\otimes N}_{\infty})e^{-\frac{8\pi^2\sigma}{|\mathbb{T}|^2} t
	}\,,
\end{equation*}
for all times $ t \geq 0 $ and for all $ N \geq 2 $. We then deduce exponential decay for the marginals $ (f_{k,N})_{1 \leq k \leq N} $ by employing the sub-additivity property of entropy \cite[Proposition 21]{Jabin2014}, which allows us to lower bound the left-hand side as follows:
\begin{equation*}
    \mathcal{H}_k(f_{k,N}(t)|f^{\otimes k}_{\infty})\,\leq\, \mathcal{H}_N(f_N(t)|f^{\otimes N}_{\infty})\,,
\end{equation*}
which ensures:
\begin{equation*}
	\mathcal{H}_k(f_{k,N}(t)|f^{\otimes k}_{\infty})\leq \mathcal{H}_N(f^0_N|f^{\otimes N}_{\infty})e^{-\frac{8\pi^2\sigma}{|\mathbb{T}|^2} t
	}\,.
\end{equation*}
To conclude, we apply the Csiszár-Kullback-Pinsker inequality \cite{Csiszar67,Kullback67}, which states that:
\begin{equation*}
    \|f_{k,N}(t,\cdot)-f^{\otimes k}_{\infty}(t,\cdot)\|^2_{L^1\left(\mathbb{T}^{d k}\right)} \,\leq\, 2\,k \,\mathcal{H}_k(f_{k,N}(t) | f^{\otimes k}_{\infty})\,,
\end{equation*}
to provide a lower bound for the left-hand side in the previous estimate. This results in the following estimate for the $ L^1 $ distance between $ f_{k,N} $ and $ f^{\otimes k}_{\infty} $:
\begin{equation*}
  \|f_{k,N}(t,\cdot)-f^{\otimes k}_{\infty}(t,\cdot)\|_{L^1\left(\mathbb{T}^{d k}\right)}\leq \sqrt{2\,k\,\mathcal{H}_N(f^0_N|f^{\otimes N}_{\infty})}\,e^{-\frac{4\pi^2\sigma}{|\mathbb{T}|^2}t}\,,
\end{equation*}
for all $ t \geq 0 $, all $ N \geq 2 $, and all $ 1 \leq k \leq N $.

To estimate the distance $ \|f^{\otimes k}_{\infty}(t,\cdot) - \bar{f}^{\otimes k}(t,\cdot)\|_{L^1(\mathbb{T}^{dk})} $, we employ a similar approach as in the previous paragraph. This leads to:
\begin{equation*}
	\|\bar{f}_{k}(t,\cdot)-f^{\otimes k}_{\infty}(t,\cdot)\|_{L^1\left(\mathbb{T}^{d k}\right)}\leq \sqrt{2\,k\,\mathcal{H}_1(\bar{f}^0|f_{\infty})}\,e^{-\frac{4\pi^2\sigma}{|\mathbb{T}|^2}t}\,,
\end{equation*}
for all times $t\geq 0$ and all $k\geq 1$.

We now sum the two preceding estimates and obtain:
\begin{equation}\label{fk:fkbar:intermediate}
\|f_{k,N}(t,\cdot)-\bar{f}^{\otimes k}(t,\cdot)\|_{L^1\left(\mathbb{T}^{d k}\right)}\leq \sqrt{2\,k}\left(\sqrt{\mathcal{H}_N(f^0_N|f^{\otimes N}_{\infty})}+ \sqrt{\mathcal{H}_1(\bar{f}^0|f_{\infty})}\right)e^{-\frac{4\pi^2\sigma}{|\mathbb{T}|^2}t}\,.
\end{equation}
To conclude, we estimate the initial relative entropy in the right-hand side. On one hand, since $\bar{f}^0$ satisfies assumption \eqref{f0Regularity}, we have:
\begin{equation}\label{estim:H1}\mathcal{H}_1(\bar{f}^0|f_{\infty})\leq C\,,\end{equation}
for some constant $C$ depending on $\bar{f}^0$. On the other hand, it is possible to express  $\mathcal{H}_N(f^0_N|f^{\otimes N}_{\infty})$ as follows:
\[
\mathcal{H}_N(f^0_N|f^{\otimes N}_{\infty})\,=\,
\mathcal{H}_N(f^0_N|(\bar{f}^0)^{\otimes N})\,+\,
\frac{1}{N}\int_{\mathbb{T}^{dN} 
} f_N^0\log{\left(\frac{(\bar{f}^0)^{\otimes N}}{f^{\otimes N}_{\infty}}\right)}\dD X^N
\,.
\]
The first term on the right-hand side is uniformly bounded for $N \geq 1$ according to \eqref{hyp:chaos}. For the second term, we can utilize \eqref{f0Regularity}, which guarantees that:
 \[\sup_{   \mathbb{T}^{dN}}\left|\log{\left(\frac{(\bar{f}^0)^{\otimes N}}{f^{\otimes N}_{\infty}}\right)}\right|\leq C\,N\,,\quad \textrm{and therefore}\quad 
 \frac{1}{N}\int_{\mathbb{T}^{dN} 
 } f_N^0\log{\left(\frac{(\bar{f}^0)^{\otimes N}}{f^{\otimes N}_{\infty}}\right)}\dD X^N
 \leq C\,,
 \] 
 for all $N\geq 1$ and for some constant $C$ depending on $\bar{f}^0$. Then, we find:
 \begin{equation}\label{estim:HN}
 \mathcal{H}_N(f^0_N|f^{\otimes N}_{\infty})\,\leq\,
 C
 \,,
 \end{equation}
 for all $N\geq 2$ and for some constant $C$ depending on $\bar{f}^0$ and the implicit constant in \eqref{hyp:chaos}. We then estimate $\mathcal{H}_1(\bar{f}^0|f_{\infty})$ using \eqref{estim:H1} and $\mathcal{H}_N(f^0_N|f^{\otimes N}_{\infty})$ with \eqref{estim:HN} in the right-hand side of \eqref{fk:fkbar:intermediate}, leading to the expected result:
 \[
 \|f_{k,N}(t,\cdot)-\bar{f}^{\otimes k}(t,\cdot)\|_{L^1\left(\mathbb{T}^{d k}\right)}\leq C\sqrt{2\,k}\,e^{-\frac{4\pi^2\sigma}{|\mathbb{T}|^2}t}\,,
 \]
for all $ t \geq 0 $, all $ N \geq 2 $, all $ 1 \leq k \leq N $, and for some constant $ C $ that depends on $ \bar{f}^0 $ and the implicit constant in \eqref{hyp:chaos}.
\end{proof}

Before proceeding with the proof of Corollary \ref{cr:1}, we summarize its two main steps. First, we combine \cite[Theorem 1]{JabinWang2018} with Lemma \ref{EntropyEstimatesLemma}. Specifically, \cite[Theorem 1]{JabinWang2018} guarantees that:
\[
f_{k,N}(t) = \bar{f}^{\otimes k}(t) + O\left(\frac{e^{Ct}}{\sqrt{N}}\right)\,,\quad\textrm{in}\quad L^1\left(\mathbb{T}^{dk}\right)\,,\quad\textrm{as}\quad t,N\;\longrightarrow +\infty\,,
\]
while Lemma \ref{EntropyEstimatesLemma} ensures:
 \[
 f_{k,N}(t) = \bar{f}^{\otimes k}(t) + O\left(e^{-Ct}\right)\,,\quad\textrm{in}\quad L^1\left(\mathbb{T}^{dk}\right)\,,\quad\textrm{as}\quad t,N\;\longrightarrow +\infty\,.
 \]
The combination of these two results establishes the simultaneous convergence of $ f_{k,N} $ toward $ \bar{f}^{\otimes k} $ in $ L^1 $, specifically:
 \[
 f_{k,N}(t) = \bar{f}^{\otimes k}(t) + O\left(\frac{e^{-Ct}}{N^{\alpha}}\right)\,,\quad\textrm{in}\quad L^1\left(\mathbb{T}^{dk}\right)\,,\quad\textrm{as}\quad t,N\;\longrightarrow +\infty\,,
 \]
 for some positive $\alpha>0$. We then interpolate the previous estimate with the uniform $ L^2 $-estimates derived in Theorems \ref{th:1} and \ref{th:2}, which enhances the $ L^1 $ convergence to $ L^p $ convergence for all $ 1 \leq p < 2 $.
 
\begin{proof}[Proof of Corollary \ref{cr:1}]
We fix $(k,N) \in \left(\mathbb{N}^*\right)^2$ such that $N \geq 2$ and $1 \leq k \leq N$. To estimate the distance between $f_{k,N}(t)$ and $\bar{f}_k(t)$, we begin with \cite[Theorem 1]{JabinWang2018}, which establishes entropic propagation of chaos with exponential growth:
\begin{equation*}
	\mathcal{H}_k(f_{N}(t)|\bar{f}^{\otimes N}(t))\,\leq \, 2\,k\,e^{C_1t}\left(\mathcal{H}_N(f_N^0|(\bar{f}^0)^{\otimes N})+\frac{1}{N}\right),
\end{equation*}
for some constant $C_1$ that depends on $K$, $d$ , $\sigma$, the implicit constant in \eqref{hyp:chaos}, and the following norms of $\bar{f}$ on the interval $[0,t]$:
\[
\sup_{s\in[0,t]}\sup_{p\geq 1}\, \frac{1}{p}\,\| \nabla_x^2\bar{f}(s,\cdot)\|_{L^p(\mathbb{T}^d)}\,,\quad \sup_{s\in[0,t]}\|\bar{f}(s,\cdot) \|_{W^{1,\infty}(\mathbb{T}^d)}\,,\quad \textrm{and} \quad\inf_{s\in[0,t]}\inf_{x\in\mathbb{T}^d} \bar{f}(s,x)\,.
\]
We then invoke \cite[Theorem 2]{Guillin2024}, which establishes that $\bar{f}(t)$ and all its derivatives remain uniformly bounded for $t \in \mathbb{R}^+$ (with $\inf_{t \geq 0} \inf_{\mathbb{T}^d} \bar{f}(t) > 0$). Consequently, the constant $C_1$ is independent of time. Following the approach used in the proof of Lemma \ref{EntropyEstimatesLemma}, we utilize the sub-additivity of entropy \cite[Proposition 21]{Jabin2014} and then apply the Csiszár-Kullback inequality \cite{Csiszar67,Kullback67} to provide a lower bound for the left-hand side, resulting in:
\begin{equation*}
    \|f_{k,N}(t,\cdot)-\bar{f}^{\otimes k}(t,\cdot)\|_{L^1(\mathbb{T}^{dk})}^2\,\leq \, 2\,k\,e^{C_1t}\left(\mathcal{H}_N(f_N^0|(\bar{f}^0)^{\otimes N})+\frac{1}{N}\right).
\end{equation*}
Next, we take the square root of the previous estimate and bound the initial relative entropy on the right-hand side using assumption \eqref{hyp:chaos}, which leads to:
\begin{equation*}
	\|f_{k,N}(t,\cdot)-\bar{f}^{\otimes k}(t,\cdot)\|_{L^1(\mathbb{T}^{dk})}\,\leq \, C\sqrt{\frac{k}{N}}\,e^{C_1t}\,.
\end{equation*}
where $C$ depends on the implicit constant in \eqref{hyp:chaos}.
Subsequently, we apply Lemma \ref{EntropyEstimatesLemma}, resulting in:
\begin{equation}\label{L1DifferenceEstimateEntropy}
    \|f_{k,N}(t,\cdot)-\bar{f}^{\otimes k}(t,\cdot)\|_{L^1(\mathbb{T}^{dk})}\leq 
    C\sqrt{k}\min\left\{ \frac{e^{C_1t}}{\sqrt{N}}, e^{-C_2t}\right\},
\end{equation}
for any $t\geq0$, where $C_2=\frac{4\pi^2\sigma}{|\mathbb{T}|^2}$, and for some constant $C$ depending on $\bar{f}^0$ and the implicit constant in \eqref{hyp:chaos}. To evaluate the minimum on the right-hand side, we define $ T_N = \frac{\log(N)}{2(C_1 + C_2)} $. Subsequently, we select a small positive constant $ \gamma $ such that $ 0 < \gamma < \frac{C_2}{C_1 + C_2} $, and we confirm that the following inequalities:
\[
\frac{e^{C_1 t}}{\sqrt{N}}\,\leq\,
\frac{e^{-(C_2-(C_1+C_2)\gamma)\,t}}{\sqrt{N}^{\,\gamma}}
\quad \textrm{when}\quad t\leq T_N\,,\]
and
\[e^{-C_2t}\,\leq\,
\frac{e^{-(C_2-(C_1+C_2)\gamma)\,t}}{\sqrt{N}^{\,\gamma}}
\quad \textrm{when}\quad t\geq  T_N\,
\]
hold. Combining the two estimates, we conclude that: 
\[
\min\left\{ \frac{e^{C_1t}}{\sqrt{N}}, e^{-C_2t}\right\}
\,\leq\,
\frac{e^{-(C_2-(C_1+C_2)\gamma)\,t}}{N^{\gamma}}\,.
\]
is verified, for all $t\geq 0$. Substituting the previous inequality into \eqref{L1DifferenceEstimateEntropy}, we obtain:
\begin{equation}\label{L1Convergence}
    \|f_{k,N}(t,\cdot)-\bar{f}^{\otimes k}(t,\cdot)\|_{L^1(\mathbb{T}^{dk})}\leq C\sqrt{k}\,\frac{e^{-\beta\,t}}{N^{\gamma}},
\end{equation}
for some positive constants $C, \beta, \gamma$ that only depend on $K$, $d$, $\sigma$, the implicit constant in \eqref{hyp:chaos}, and the norms of $\bar{f}^0$. To conclude the proof, we apply H\"{o}lder inequality, which ensures:
\[
 \|f_{k,N}(t,\cdot)-\bar{f}^{\otimes k}(t,\cdot)\|_{L^p(\mathbb{T}^{dk})}
 \,\leq\,
  \|f_{k,N}(t,\cdot)-\bar{f}^{\otimes k}(t,\cdot)\|_{L^1(\mathbb{T}^{dk})}^{\frac{2-p}{p}}
  \|f_{k,N}(t,\cdot)-\bar{f}^{\otimes k}(t,\cdot)\|_{L^2(\mathbb{T}^{dk})}^{2\frac{p-1}{p}}\,,
\]
for all $1<p<2$. On the right-hand side of the previous inequality, we estimate the $ L^1 $-norm using \eqref{L1Convergence}, yielding:
\begin{equation}\label{DifLpL2}
	\|f_{k,N}(t,\cdot)-\bar{f}^{\otimes k}(t,\cdot)\|_{L^p(\mathbb{T}^{dk})}
	\,\leq\,
	\left(\frac{C\sqrt{k}e^{-\beta t}}{N^\gamma}\right)^{\frac{2-p}{p}}
	\|f_{k,N}(t,\cdot)-\bar{f}^{\otimes k}(t,\cdot)\|_{L^2(\mathbb{T}^{dk})}^{2\frac{p-1}{p}}\,
\end{equation}
To bound the $ L^2 $-norm, we first apply the triangle inequality, allowing us to express it as:
\begin{equation*}
	\|f_{k,N}(t,\cdot)-\bar{f}^{\otimes k}(t,\cdot)\|_{L^2(\mathbb{T}^{dk})}\leq 	\|f_{k,N}(t,\cdot)\|_{L^2(\mathbb{T}^{dk})}+  \|\bar{f}^{\otimes k}(t,\cdot)\|_{L^2(\mathbb{T}^{dk})}.
\end{equation*}
To bound $ \|f_{k,N}(t,\cdot)\|_{L^2(\mathbb{T}^{dk})} $, we can utilize either \eqref{MarginalsBound} from Theorem \ref{th:1} or \eqref{MarginalsBoundH-1} from Theorem \ref{th:2}. Specifically, we have:
\begin{subequations}
	\begin{numcases}{}			\label{BoundFromTh1}
		\|f_{k,N}(t,\cdot)\|_{L^2(\mathbb{T}^{dk})}\leq C k^{\alpha k}\;, \textrm{ under assumptions of Theorem \ref{th:1} , or}\\[0.8em]
		\label{BoundFromTh2}
			\|f_{k,N}(t,\cdot)\|_{L^2(\mathbb{T}^{dk})}\leq C R^k\;,\textrm{ under assumptions of Theorem \ref{th:2},}\,
	\end{numcases}
\end{subequations}
 for some $C$ depending on the implicit constant given by  \eqref{MarginalsBound} or \eqref{MarginalsBoundH-1}, respectively.

Now, we proceed to estimate $\|\bar{f}^{\otimes k}(t)\|_{L^2(\mathbb{T}^{dk})}$. By the definition of the tensorized distribution $\bar{f}^{\otimes k}$ and the fact that $\bar{f}$ solves the equation \eqref{VlasovFokkerPlanck}, we find that $\bar{f}^{\otimes k}$ satisfies:
\begin{equation*}
	\partial_t \bar{f}^{\otimes k}+\sum_{i=1}^{k} \udiv_{x_i}\left( K\star\bar{f}(t,x_i)\bar{f}^{\otimes k}\right)=\sigma\sum_{i=1}^{k}\Delta_{x_i}\bar{f}^{\otimes k}.
\end{equation*}
We multiply the above equation by $\bar{f}^{\otimes k}$ and integrate over $\mathbb{T}^{dk}$  to obtain:
\begin{equation*}
	\frac{1}{2}\frac{\dD}{\dD t} \|\bar{f}^{\otimes k}(t,\cdot)\|_{L^2(\mathbb{T}^{dk})}^2=\cA+\cB,
\end{equation*}
where $\cA$ and $\cB$ are given by:
\begin{equation*}
\left\{
\begin{array}{ll}
	&\ds\cA\,=\,-\,\sum_{i=1}^{k} \int_{\mathbb{T}^{dk}} \udiv_{x_i}\left( K\star\bar{f}(t,x_i)\bar{f}^{\otimes k}(t,X^k)\right)\bar{f}^{\otimes k}(t,X^k)\,\dD X^k,  \\[1.em]
	&\ds\cB\,=\, \sigma\sum_{i=1}^{k}\int_{\mathbb{T}^{dk}}\Delta_{x_i}\bar{f}^{\otimes k}(t,X^k)\bar{f}^{\otimes k}(t,X^k)\,\dD X^k. 
\end{array}
\right.
\end{equation*}
Observe that $\mathcal{A}$ vanishes due to the divergence-free assumption. In fact, the relation $\bar{f}^{\otimes k} \nabla_{x_i} \bar{f}^{\otimes k} = \nabla_{x_i} \left| \bar{f}^{\otimes k} \right|^2 / 2$ allows us to apply integration by parts with respect to $x_i$ in $\mathcal{A}$, yielding:
\begin{equation*}
\cA\,=\,\frac{1}{2}\,\sum_{i=1}^k\int_{\mathbb{T}^{dk}}   K\star\bar{f}(t,x_i)\cdot \nabla_{x_i}\left| \bar{f}^{\otimes k}\right|^2(t,X^k) \,\dD X^k\,=\,0\,.
\end{equation*}
On the other hand, integrating by parts in $\cB$, we deduce:
\begin{equation*}
	\cB\,=-\, \sigma \|\nabla_{X^{k}}\bar{f}^{\otimes k}(t,\cdot)\|_{L^2(\mathbb{T}^{dk})}^2\leq 0.
\end{equation*}
Consequently, the function $\|\bar{f}^{\otimes k}(t,\cdot)\|_{L^2(\mathbb{T}^{dk})}$ is non-increasing in $t$, which leads to the inequality $\|\bar{f}^{\otimes k}(t,\cdot)\|_{L^2(\mathbb{T}^{dk})} \leq \|\bar{f}^{\otimes k}(0)\|_{L^2(\mathbb{T}^{dk})}$. By the definition of $\bar{f}^{\otimes k}$, this implies:
\begin{equation}\label{BoundL2Tensorized}
	\|\bar{f}^{\otimes k}(t,\cdot)\|_{L^2(\mathbb{T}^{dk})}\leq \|\bar{f}^0\|_{L^2(\mathbb{T}^{dk})}^{k}.
\end{equation}
 Inserting estimates \eqref{BoundFromTh1}, \eqref{BoundFromTh2} and  \eqref{BoundL2Tensorized} into inequality \eqref{DifLpL2}, we conclude:
\[
\|f_{k,N}(t,\cdot)-\bar{f}^{\otimes k}(t,\cdot)\|_{L^{p}(\mathbb{T}^{dk})}\leq X_k^\frac{2(p-1)}{p}\left(\frac{C\sqrt{k}e^{-\beta t}}{N^\gamma}\right)^{\frac{2-p}{p}}\,,
\]
where either $ X_k = C k^{\alpha k} + \|\bar{f}^0\|_{L^2(\mathbb{T}^{dk})}^k $ if we are under the assumptions of Theorem \ref{th:1}. Alternatively, $ X_k = C R^k + \|\bar{f}^0\|_{L^2(\mathbb{T}^{dk})}^{k} $ applies under the assumptions of Theorem \ref{th:2}. In both cases, $ C $ represents a constant that depends on the implicit constants provided by \eqref{MarginalsBound} or \eqref{MarginalsBoundH-1}, respectively.
\end{proof}
\section{Failure of uniform propagation of chaos}\label{Sec:counter:ex}
Our counterexample arises in a seemingly favorable configuration: we fix the dimension to $d=1$ and consider the smooth Kuramoto interaction kernel:
\[
K(x)\,=\,-\sin(x)\,,
\]
where $ x\in \T$. With this choice of $K$, two distinct stationary states exist for the limiting equation \eqref{VlasovFokkerPlanck} when $\sigma > 0$ is sufficiently small \cite[Theorem $4.1$]{Ha_Shim_Zhang20}: the homogeneous stationary state $ \bar{f}_{1,\infty} = 1/|\mathbb{T}| $ and a non-homogeneous stationary state, denoted $ \bar{f}_{2,\infty} \in \mathcal{C}^2(\mathbb{T}) $. We now consider the solutions $ (f_N(t))_{N \geq 2} $ to the Liouville equation \eqref{Liouville}, along with the constant solution $ \bar{f}(t) $ to \eqref{VlasovFokkerPlanck}, under the following chaotic initial configurations:
\[
f^0_N\,=\,
\bar{f}_{2,\infty}^{\otimes N}
\,,\quad \textrm{and}\quad
\bar{f}^0
\,=\,
\bar{f}_{2,\infty}
\,.
\] 
Next, we establish that $ f_N(t) $ converges to the unique stationary state $ f_{N,\infty} $ of the Liouville equation \eqref{Liouville}, which is given by:
\[
f_{N,\infty}(x_1,\dots, x_N)\,=\,
c_{\infty,N}\,\exp{\left(\frac{1}{\sigma N}\sum_{i,j=1}^N \cos(x_i-x_j)\right)}\,,
\]
where $c_{\infty,N}$ is a normalizing constant. Indeed, a standard relative entropy estimate, detailed the proof of Lemma \ref{EntropyEstimatesLemma}, gives:
\begin{equation*}
	\frac{\dD}{\dD t}\, \mathcal{H}_N(f_N(t)|f_{N,\infty})=-\frac{\sigma}{N} \int_{\mathbb{T}^{N}} \left|\nabla_{X^N} \ln{\frac{f_N(t,X^N)}{f_{N,\infty}(X^N)}}\right|^2 f_N(t,X^N)\,\dD X^N\,.
\end{equation*}
Then, we apply \cite[Lemma $2$]{Guillin2024}, which ensures that, since the distribution $f_{N,\infty}$ is bounded both from above and below, it satisfies a logarithmic Sobolev inequality, which yields:
\begin{equation*}
	\frac{\dD}{\dD t}\, \mathcal{H}_N(f_N(t)|f_{N,\infty})\leq-c_N\,\mathcal{H}_N(f_N(t)|f_{N,\infty})\,,
\end{equation*}
for some constant $c_N$ depending on $N$. This  inequality confirms the convergence of $ f_N(t) $ to $ f_{N,\infty} $, along with the following estimate, which is derived using the same steps as in the proof of Lemma \ref{EntropyEstimatesLemma}:
\begin{equation*}
	\lim_{t\rightarrow +\infty} \cH_1\left(f_{1,N}(t),f_{1,\infty}\right)
	\,=\, 0\,.
\end{equation*}
In the above relation, we note that the first marginal $f_{1,\infty}$ is in fact homogeneous, that is $f_{1,\infty} =\bar{f}_{1,\infty}$. Indeed, we have:
\begin{equation*}
\begin{split}
	f_{1,\infty}(x+y)&=
	\int_{\mathbb{T}^{N-1}} 
	\Tilde{c}_{\infty,N} \exp{\left(\frac{1}{\sigma N}\left(\sum_{j=2}^N \cos(x+y-x_j)+\sum_{i,j=2}^N \cos(x_i-x_j)\right) \right)}
	\,\dD X^{N-1}\\
	&=\,
	f_{1,\infty}(x)\,,
\end{split}
\end{equation*}
for all $(x,y)\in\T^2$, where the second inequality follows from the  change of coordinates $x_j\leftarrow x_j-y$. This gives:
\begin{equation*}
	\lim_{t\rightarrow +\infty} \cH_1\left(f_{1,N}(t),\bar{f}_{1,\infty}\right)
	\,=\, 0\,.
\end{equation*}
We lower bound the relative entropy with the $L^1$-norm in the latter relation according to the Csiszár-Kullback inequality \cite{Csiszar67,Kullback67}, which yields
\begin{equation}\label{cv:f1N:f1inf}
	\lim_{t\rightarrow +\infty} \|f_{1,N}(t,\cdot)-\bar{f}_{1,\infty}\|_{L^1(\mathbb{T})}
	\,=\, 0\,.
\end{equation}
Next, we use the triangular inequality to lower bound the distance between $f_{1,N}$ and $\bar{f}$
\[ \|\bar{f}(t,\cdot)-f_{1,N}(t,\cdot)\|_{L^1(\mathbb{T})}
\,\geq\,
\|\bar{f}(t,\cdot)-\bar{f}_{1,\infty}\|_{L^1(\mathbb{T})}
-
\|\bar{f}_{1,\infty}-f_{1,N}(t,\cdot)\|_{L^1(\mathbb{T})}
\,,
\]
and take the $\limsup$ as $|(t,N)|\rightarrow +\infty$ in the latter estimate, which yields
\begin{equation*}
\begin{split}
	\limsup_{|(t,N)|\rightarrow +\infty} \|\bar{f}(t,\cdot)-f_{1,N}(t,\cdot)\|_{L^1(\mathbb{T})}
	\,\geq\,&
	\limsup_{|(t,N)|\rightarrow +\infty}
	\|\bar{f}(t,\cdot)-\bar{f}_{1,\infty}\|_{L^1(\mathbb{T})}\\
	-&
	\liminf_{|(t,N)|\rightarrow +\infty}
	\|\bar{f}_{1,\infty}-f_{1,N}(t,\cdot)\|_{L^1(\mathbb{T})}\,.
\end{split}
\end{equation*}
According to \eqref{cv:f1N:f1inf}, the second term on the latter right hand side is $0$, which yields
\[
\limsup_{|(t,N)|\rightarrow +\infty} \|\bar{f}(t,\cdot)-f_{1,N}(t,\cdot)\|_{L^1(\mathbb{T})}
\,\geq\,
\limsup_{|(t,N)|\rightarrow +\infty}
\|\bar{f}(t,\cdot)-\bar{f}_{1,\infty}\|_{L^1(\mathbb{T})}\,.
\]
Since $\bar{f}(t)=\bar{f}_{2,\infty}$, for all $t\geq 0$, and since $\bar{f}_{1,\infty}\neq \bar{f}_{2,\infty}$ we obtain our result, that is:
\[
\limsup_{|(t,N)|\to \infty} \|f_{1,N}(t,\cdot)-\bar{f}(t,\cdot)\|_{L^1(\mathbb{T})}
\,\geq\,
\limsup_{|(t,N)|\to \infty}
\|\bar{f}_{1,\infty}-\bar{f}_{2,\infty}\|_{L^1(\mathbb{T})}>0\,.
\]

\appendix
\section{Sobolev inequality on \texorpdfstring{$\mathbb{T}^n$}{TEXT}: Proof of Theorem \texorpdfstring{\ref{Sob:ineq:Pi}}{TEXT}} \label{Proof:th:sob:ineq}
In this section, we prove the Sobolev inequality stated in Theorem \ref{Sob:ineq:Pi}. We begin by addressing the case where the torus has a size of one, i.e., $ |\mathbb{T}| = 1 $, and then we extend our results to the general case. Thus, we establish the inequality outlined in Theorem \ref{Sob:ineq:Pi} for a fixed function $ f \in H^1(\mathbb{T}^n) $ with $ |\mathbb{T}| = 1 $.

We denote $ f $ as the periodic extension of the function to the entire space, which implies $ f \in H^1_{\text{loc}}(\mathbb{R}^n) $. Next, we introduce a Lipschitz cutoff $ \varphi_n \in \mathcal{C}^0_c(\mathbb{R}^n) $ with compact support, which will be determined later. We then apply the Sobolev inequality on $ \mathbb{R}^n $, which guarantees \cite{Aubin1976, Talenti1976} that:
\[
\left\|f\,
\varphi_n
\right\|^2_{L^{2^\star}(\R^n)}
\,\leq
\,
K_n^2
\left\|
\nabla_{X^n}
\left(f\,
\varphi_n\right)
\right\|^2_{L^{2}(\R^n)}\,,
\]
where $K_n$ and $2^\star$ are given in Theorem \ref{Sob:ineq:Pi}. We then apply the Leibniz rule for products along with Young's inequality to estimate the right-hand side, resulting in the following expression:
\begin{equation}\label{sob:0}
\left\|f\,
\varphi_n
\right\|^2_{L^{2^\star}(\R^n)}
\,\leq
\,
2 K_n^2\left(
\left\|
\varphi_n
\nabla_{X^n}
f
\right\|^2_{L^{2}(\R^n)}
+
\left\|
f\,
\nabla_{X^n}
\varphi_n
\right\|^2_{L^{2}(\R^n)}
\right)\,.
\end{equation}
Since $ f $ is periodic, we can express each of the norms over $ \mathbb{R}^n $ in the previous estimate as norms over the torus. For instance, we have:
\begin{align*}
\left\|
\varphi_n
\nabla_{X^n}
f
\right\|^2_{L^{2}(\R^n)}
\,&=\,
\int_{\R^n}
\left|
\varphi_n(X^n)
\nabla_{X^n}
f(X^n)\right|^2
\dD X^n\\
&=\,
\int_{[0,1]^n}
\left|\nabla_{X^n}
f(X^n)\right|^2
\sum_{k\in\Z^n}
\left|
\varphi_n(k+X^n)\right|^2
\dD X^n\,.
\end{align*}
By applying the above relation to all three terms in \eqref{sob:0}, we obtain:
\begin{align*}
\biggl(
\int_{[0,1]^n}
\left|
f(X^n)\right|^{2^\star}
\omega^{\star}_n(X^n)\,
&\dD X^n
\biggr)^{\frac{2}{2^\star}}
\leq \\
&2
K_n^2
\int_{[0,1]^n}
\left|\nabla_{X^n}
f(X^n)\right|^2
\omega_n(X^n)
+
\left|
f(X^n)\right|^2
\Omega_n(X^n)\,
\dD X^n
,
\end{align*}
where $\omega_n$, $\omega^\star_n$ and $\Omega_n$ are the periodic functions given by:
\[
\left\{
\begin{array}{ll}
     &\ds\omega_n\left(X^n\right)\,=\,\sum_{k\in\Z^n}
\left|
\varphi_n(k+X^n)\right|^2  \\[1.em]
     &\ds\omega^\star_n\left(X^n\right)\,=\,\sum_{k\in\Z^n}
\left|
\varphi_n(k+X^n)\right|^{2^\star} \\[1.em]
     &\ds\Omega_n\left(X^n\right)\,=\,\sum_{k\in\Z^n}
\left|\nabla_{X^n}
\varphi_n(k+X^n)\right|^2
\end{array}
\right.
\;\;,\quad\forall \,X^n\in[0,1]^n\,.
\]
We note that all the computations above remain valid if we replace $\varphi_n$ with its translation $\varphi_n(\cdot - Y^n)$ for any vector $Y^n \in [0, 1]^n$. Consequently, the following estimate holds for all $Y^n \in [0, 1]^n$:
\[
\left|
f\right|^{2^\star}
\star
\omega^{\star}_n\left(Y^n\right)
^{\frac{2}{2^\star}}
\leq
2
K_n^2
\left(
\left|\nabla_{X^n}
f\right|^2\star
\omega_n\left(Y^n\right)
+
\left|
f\right|^2\star
\Omega_n\left(Y^n\right)
\right)
\,,
\]
where $\star$ denotes the convolution product on $\mathbb{T}^n$. By integrating the latter relation with respect to $Y^n \in \mathbb{T}^n$, we find:
\[
\left\|
\left|
f\right|^{2^\star}
\star
\omega^{\star}_n
\right\|_{L^{\frac{2}{2^\star}}\left(\mathbb{T}^n\right)}^{\frac{2}{2^\star}}
\leq
2
K_n^2
\left(
\left\|
\left|\nabla_{X^n}
f\right|^2\star
\omega_n\right\|_{L^1\left(\mathbb{T}^n\right)}
+
\left\|
\left|
f\right|^2\star
\Omega_n\right\|_{L^1\left(\mathbb{T}^n\right)}
\right)
\,.
\]
To estimate the right-hand side, we make two observations. First, since $\omega_n$ and $\Omega_n$ are non-negative functions, we have:
\[
\left\{
\begin{array}{ll}
     \ds\left\|
\left|\nabla_{X^n}
f\right|^2\star
\omega_n\right\|_{L^1\left(\mathbb{T}^n\right)}
\,&=\,
\left\|
\nabla_{X^n}
f
\right\|_{L^2\left(\mathbb{T}^n\right)}^2
\left\|
\omega_n\right\|_{L^1\left(\mathbb{T}^n\right)} \,, \\[1.em]
\ds\left\|
\left|
f\right|^2\star
\Omega_n\right\|_{L^1\left(\mathbb{T}^n\right)}
\,&=\,
\left\|
f
\right\|_{L^2\left(\mathbb{T}^n\right)}^2
\left\|
\Omega_n\right\|_{L^1\left(\mathbb{T}^n\right)}\,.
\end{array}
\right.
\]
Second, the $L^1$-norm of $\omega_n$ can be explicitly expressed in terms of $\varphi_n$, as follows:
\[
\left\|
\omega_n\right\|_{L^1\left(\mathbb{T}^n\right)}
\,=\,
\int_{[0,1]^n}
\sum_{k\in\Z^n}
\left|
\varphi_n(k+X^n)\right|^2
\dD X^n
\,=\,
\int_{\R^n}
\left|
\varphi_n(X^n)\right|^2
\dD X^n
\,=\,
\left\|
\varphi_n\right\|_{L^2\left(\R^n\right)}^2.
\]
Using the same argument, we also find $\left\|
\Omega_n\right\|_{L^1\left(\mathbb{T}^n\right)}
\,=\,
\left\|
\nabla_{X^n}\varphi_n\right\|_{L^2\left(\R^n\right)}^2.$
Taking these two observations into account, our previous estimate is effectively equivalent to the following: 
\begin{equation}\label{proof:sob:ineq:tech:estimate}
\begin{split}
	\left\|
\left|
f\right|^{2^\star}
\star
\omega^{\star}_n
\right\|_{L^{\frac{2}{2^\star}}\left(\mathbb{T}^n\right)}^{\frac{2}{2^\star}}
\leq
2
K_n^2
\Big(&
\left\|
\nabla_{X^n}
f
\right\|_{L^2\left(\mathbb{T}^n\right)}^2
\left\|
\varphi_n\right\|_{L^2\left(\R^n\right)}^2
\\ &+
\left\|
f
\right\|_{L^2\left(\mathbb{T}^n\right)}^2
\left\|
\nabla_{X^n}\varphi_n\right\|_{L^2\left(\R^n\right)}^2
\Big)
\,.
\end{split}
\end{equation}
To estimate the right-hand side of the inequality, we define $\varphi_n$ as a piecewise linear function and explicitly compute both its $L^2$-norm and the $L^2$-norm of its gradient. Specifically, we set $\varphi_n$ for all $X^n = (x_1, \ldots, x_n) \in \mathbb{R}^n$ as follows:
\[
\varphi_n\left(X^n\right)\,=\,
\varphi(x_1)\cdots\varphi(x_n)\,,
\textrm{ with }
\varphi(x)\,=\,
\left\{
\begin{array}{ll}
     \ds 1
\quad &\ds\textrm{if}\quad |x|\leq \frac{1}{2}\\[1.5em]
\ds 0
\quad &\ds\textrm{if}\quad \frac{1+\eta}{2}\leq|x| \\[1.5em]
 \ds \frac{\eta+1}{\eta}-\frac{2|x|}{\eta}\quad&\ds
\textrm{if}\quad \frac{1}{2}\leq |x|\leq \frac{1+\eta}{2}
\end{array}
\right.\;,
\]
for some small parameter $\eta>0$ to be fixed later on. Since $\varphi_n$ is now tensorized, we have:
\[
\left\|
\varphi_n\right\|_{L^2\left(\R^n\right)}^2
\,=\,
\left\|
\varphi\right\|_{L^2\left(\R\right)}^{2n}\,.
\]
Furthermore, since $\varphi$ is constrained between $0$ and $1$, we can bound the square of its $L^2$-norm by the size of its support, which is given by the interval $[-(1+\eta)/2, (1+\eta)/2]$. This results in the following estimate for all $n \geq 1$:
\begin{equation}\label{proof:sob:ineqRHS:1}
\left\|
\varphi_n\right\|_{L^2\left(\R^n\right)}^2
\,\leq\,
(1+\eta)^n\,.
\end{equation}
We now estimate the $L^2$-norm of $\nabla_{X^n}\varphi_n$. First, we observe that $\varphi_n$ is invariant under the permutation of coordinates, which means it satisfies property \eqref{hyp:f0}. Consequently, we have:
\[
\left\|
\nabla_{X^n}\varphi_n\right\|_{L^2\left(\R^n\right)}^2
\,=\,
n
\left\|
\partial_{x_1}\varphi_n\right\|_{L^2\left(\R^n\right)}^2\,.
\]
Second, we leverage the fact that $\varphi_n$ is tensorized along with our previous estimate of $\left\|\varphi_{n-1}\right\|_{L^2\left(\R^{n-1}\right)}^2$ to obtain:
\[
\left\|
\nabla_{X^n}\varphi_n\right\|_{L^2\left(\R^n\right)}^2
\,=\,
n
\left\|
\varphi_{n-1}\right\|_{L^2\left(\R^{n-1}\right)}^2
\left\|
\varphi'\right\|_{L^2\left(\R\right)}^2
\leq
n
\left(
1+\eta\right)^{n-1}
\left\|
\varphi'\right\|_{L^2\left(\R\right)}^2\,.
\]
We calculate the norm of $\varphi'$ in the previous estimate using the following relation, which is verified for all $x \in \mathbb{R}$:
\[
\varphi'(x)
\,=\,
\frac{2}{\eta}\,
\mathds{1}_{\left[-\frac{1+\eta}{2},-\frac{1}{2}\right]}(x)
\,-\,
\frac{2}{\eta}\,
\mathds{1}_{\left[\frac{1}{2},\frac{1+\eta}{2}\right]}(x)\,.
\]
Hence, we obtain $\left\|
\varphi'\right\|_{L^2\left(\R\right)}^2
\,=\,
4/\eta
$, and deduce the following bound for the gradient of $\varphi_n$:
\begin{equation}\label{proof:sob:ineqRHS:2}
\left\|
\nabla_{X^n}\varphi_n\right\|_{L^2\left(\R^n\right)}^2
\,
\leq\,
\frac{4\,n}{\eta}
\left(
1+\eta\right)^{n-1}\,.
\end{equation}
In the final step of this proof, we seek a lower bound for the left-hand side of \eqref{proof:sob:ineq:tech:estimate}. Our approach involves estimating the infimum value of $\omega_n^\star$. To achieve this, we make the following two observations:
\begin{enumerate}
	\item whenever $k+X^n$ lies in $[-1/2,1/2]^n$, we have $\varphi_n(k+X^n)=1$ ;\\[-0.7em]
	\item for all $X^n\in[0,1]^n$, there exists $k\in\Z^n$ such that $(k+X^n)\in [-1/2,1/2]^n$.
\end{enumerate}
 Consequently, we can establish a lower bound for $\omega_n$ of at least one: for all $X^n \in [0,1]^n$, we have:
\[
\omega^\star\left(X^n\right)\,=\,\sum_{k\in\Z^n}
\left|
\varphi_n(k+X^n)\right|^{2^\star} 
\,\geq\,1\,.
\]
Substituting this estimate into the left-hand side of \eqref{proof:sob:ineq:tech:estimate} and utilizing the fact that $|\mathbb{T}| = 1$, we obtain:
\begin{equation}\label{proof:sob:ineqlHS}
\left\|
f
\right\|_{L^{2^\star}\left(\mathbb{T}^n\right)}^2
\leq
\left\|
\left|
f\right|^{2^\star}
\star
\omega^{\star}_n
\right\|_{L^{\frac{2}{2^\star}}\left(\mathbb{T}^n\right)}^{\frac{2}{2^\star}}\,.
\end{equation}
We now incorporate \eqref{proof:sob:ineqRHS:1}, \eqref{proof:sob:ineqRHS:2}, and \eqref{proof:sob:ineqlHS} into the estimate \eqref{proof:sob:ineq:tech:estimate}, yielding:
\begin{equation*}
	\left\|
	f
	\right\|_{L^{2^\star}\left(\mathbb{T}^n\right)}^2
	\leq
	2
	K_n^2
	\left(
	(1+\eta)^n
	\left\|
	\nabla_{X^n}
	f
	\right\|_{L^2\left(\mathbb{T}^n\right)}^2
	+
	\frac{4\,n}{\eta}
	\left(
	1+\eta\right)^{n-1}
	\left\|
	f
	\right\|_{L^2\left(\mathbb{T}^n\right)}^2
	\right)
	\,.
\end{equation*}
Next, we set $\eta = \frac{1}{n}$ and take the square root of the resulting inequality. After performing some straightforward calculations, we obtain the estimate stated in Theorem \ref{Sob:ineq:Pi} for the case when $|\mathbb{T}| = 1$:
\begin{equation}\label{sob:pi1}
	\left\|
	f
	\right\|_{L^{2^\star}\left(\mathbb{T}^n\right)}
	\leq
	\sqrt{2e}
	K_n
	\left(
	\left\|
	\nabla_{X^n}
	f
	\right\|_{L^2\left(\mathbb{T}^n\right)}^2
	+
4\,n^2
	\left\|
	f
	\right\|_{L^2\left(\mathbb{T}^n\right)}^2
	\right)^{\frac{1}{2}}
	\,.
\end{equation}
Using a scaling argument, we can extend the previous inequality to the general case where $|\mathbb{T}| > 0$. We denote the torus of length $|\mathbb{T}| = L > 0$ as $[0,L]^n_\text{per}$. For every $g \in H^1([0,L]^n_\text{per})$, we then proceed as follows:
\[
\left(
f\,:\,X^n\in[0,1]^n_\textrm{per}\longmapsto g(L X^n)\right)\in H^1\left([0,1]^n_\textrm{per}\right)\,.
\]
The Lebesgue norms of $ f $ and $ g $ are explicitly connected through a linear change of variables, namely:
\[
\begin{split}
\left\|
f
\right\|_{L^{p}\left([0,1]^n_\textrm{per}\right)}
\,&=\,
L^{-\frac{n}{p}}
\left\|
g
\right\|_{L^{p}\left([0,L]^n_\textrm{per}\right)}\,,\\
\left\|
\nabla_{X^n} f
\right\|_{L^{p}\left([0,1]^n_\textrm{per}\right)}
\,&=\,
L^{-\frac{n}{p}+1}
\left\|
\nabla_{X^n} 
g
\right\|_{L^{p}\left([0,L]^n_\textrm{per}\right)}\,,
\end{split}
\]
for all $p\geq 1$. Hence, we can apply the Sobolev inequality \eqref{sob:pi1} to $ f \in H^1\left([0,1]^n_\textrm{per}\right) $ and substitute the norms of $ f $ and $ g $ using the previous relation. This results in:
\begin{equation*}
	L^{-\frac{n}{2^\star}}
	\left\|
	g
	\right\|_{L^{2^\star}\left([0,L]^n_\textrm{per}\right)}
	\leq
	\sqrt{2e}
	K_n
	\left(
	L^{-n+2}	
	\left\|
	\nabla_{X^n}
	g
	\right\|_{L^2\left([0,L]^n_\textrm{per}\right)}^2
	+
	L^{-n}
	4\,n^2
	\left\|
	g
	\right\|_{L^2\left([0,L]^n_\textrm{per}\right)}^2
	\right)^{\frac{1}{2}}
	\,.
\end{equation*}
To conclude the proof, we multiply the preceding estimate by $ L^{\frac{n}{2^\star}} $ and utilize the relation $ \frac{2}{2^\star} = 1 - \frac{2}{n} $ to obtain the desired result:
\begin{equation*}
	\left\|
	g
	\right\|_{L^{2^\star}\left([0,L]^n_\textrm{per}\right)}
	\leq
	\sqrt{2e}
	K_n
	\left(
	\left\|
	\nabla_{X^n}
	g
	\right\|_{L^2\left([0,L]^n_\textrm{per}\right)}^2
	+
	\frac{4\,n^2}{L^2}
	\left\|
	g
	\right\|_{L^2\left([0,L]^n_\textrm{per}\right)}^2
	\right)^{\frac{1}{2}}
	\,.
\end{equation*}

\section{Interpolation between 
\texorpdfstring{$W^{1,\infty}$}{TEXT} and \texorpdfstring{$L^2$}{TEXT}}\label{App:Interp}
In this section, we establish relation \eqref{prop:interp} below, which provides justification for the interpolation inequality \eqref{estim:T:3} used in the proof of Lemma \ref{Lemma:hierarchy:term}:
\begin{equation}\label{prop:interp}
\left(L^2(\mathbb{T}^d),W^{1,\infty}(\mathbb{T}^{d})\right)_{1-\theta,\frac{2}{\theta}}\,=\, W^{1-\theta,\frac{2}{\theta}}(\mathbb{T}^{d})\,,\quad \textrm{where}\quad\theta\,=\,\frac{2}{d+2}\,.
\end{equation}
Relation \eqref{prop:interp} follows from \cite[Theorem 2.3]{Cohen2003}, established by A. Cohen, which addresses interpolation with $ L^1 $ spaces. E. Curc\u{a} extended this result to our context using a duality argument. Specifically, as stated in \cite[Theorem 4]{Curca2024}, relation \eqref{prop:interp} is valid on $ \mathbb{R}^d $, which means that:
\begin{equation}\label{curca:interp}
\left(L^2\left(\R^d\right),W^{1,\infty}\left(\R^{d}\right)\right)_{1-\theta,\frac{2}{\theta}}\,=\, W^{1-\theta,\frac{2}{\theta}}\left(\R^{d}\right),\quad \textrm{where}\quad\theta\,=\,\frac{2}{d+2}\,,
\end{equation}
where we used, with the notation of \cite{Curca2024}, the relations 
\[
\begin{split}
F^{0,2}_2(\R^d) &= L^2(\R^d)\\ B^{1-\theta,\,2/\theta}_{2/\theta}(\R^d) &= W^{1-\theta,\,2/\theta}\left(\R^{d}\right), 
\end{split}
\] 
see \cite[(i) and (iii), Theorem $1.5.1$]{Triebel92}. The proof of \eqref{prop:interp} employs a technical truncation argument to reformulate \eqref{curca:interp} in the context of $\mathbb{T}^d$.
\begin{proof}[proof of \eqref{prop:interp}]
We choose a smooth function $\chi \in C_c^{\infty}(\mathbb{R}^d)$ such that 
\begin{equation}\label{chi:construct}
0\leq \chi \leq 1\quad ;\quad\chi=1\;\;\;\textrm{on}\;\; \left[-\frac{|\mathbb{T}|}{2},\frac{|\mathbb{T}|}{2}\right]^d
\quad ;
\quad\chi=0\;\;\;\textrm{on}\;\; \left(\left[-|\mathbb{T}|,|\mathbb{T}|\right]^d\right)^c
\,.
\end{equation}
For all $ L \in \left( L^2\left( \mathbb{T}^d \right), W^{1,\infty}\left( \mathbb{T}^d \right) \right)_{1-\theta,\, 2/\theta} \cap W^{1-\theta, 2/\theta}\left( \mathbb{T}^d \right) $, we denote by $\tilde{L}$ the product of $\chi$ and the periodic extension of $L$ to $\mathbb{R}^d$.

The main challenge is to establish the existence of a constant $ C > 0 $, which depends solely on the choice of $\chi$, such that: 
\[
\left\|L\right\|_{ \left(L^2\left(\mathbb{T}^d\right),W^{1,\infty}\left(\mathbb{T}^{d}\right)\right)_{1-\theta,\frac{2}{\theta}}}
\,\leq \,C\,
\|\Tilde{L}\|_{ \left(L^2\left(\R^d\right),W^{1,\infty}\left(\R^{d}\right)\right)_{1-\theta,\frac{2}{\theta}}}\,.
\]
To derive the latter estimate, we consider a pair $\left( \tilde{L}_1, \tilde{L}_2 \right)$ such that
\[
\Tilde{L}\,=\,\Tilde{L}_1+\Tilde{L}_2\,,\quad\textrm{and}\quad \Tilde{L}_1 \in 
W^{1,\infty}\left(\R^d\right)
\,,\quad\textrm{and}\quad \Tilde{L}_2 \in 
L^2\left(\R^d\right)\,.
\]
Then, we define:
\[
L_i(x)\,=\,
\left(\sum_{k\in\Z^d}
\chi\left(x+|\mathbb{T}|k\right)\right)^{-1}
\sum_{k\in\Z^d}
\Tilde{L}_i\left(x+|\mathbb{T}|k\right)
\chi\left(\frac{1}{2}(x+|\mathbb{T}|k)\right)\,,
\]
for all $x\in\R^d$, with $i\in\{1,2\}$. We verify that $ L_i $ for $ i \in \{1, 2\} $ defines a $ |\mathbb{T}| $-periodic function over $ \mathbb{R}^d $ and that 
\begin{equation}\label{interp:0}
L(x)\,=\,
L_1(x)+L_2(x)\,,\quad
 \forall x \in \left[-\frac{|\mathbb{T}|}{2},\frac{|\mathbb{T}|}{2}\right]^d\,.
\end{equation}
This relation guarantees that 
\begin{equation}\label{interp:1}
K(t,L)\,\leq
\left\|L_2\right\|_{L^{2}\left(\mathbb{T}^{d}\right)}
+
t
\left\|L_1\right\|_{W^{1,\infty}\left(\mathbb{T}^{d}\right)}
	\,,
\end{equation}
where the $K$ functional \cite[Section $1.6.2$]{Triebel92} is defined as:
\[
K(t,L)\,= \inf_{L=L_1^{'}+L_2^{'}}
\,\|L_2^{'}\|_{L^{2}\left(\mathbb{T}^{d}\right)}\,
+\,
t\,
\|L^{'}_1\|_{W^{1,\infty}\left(\mathbb{T}^{d}\right)}
\,,
\]
for all $t>0$. We can estimate the norms of $ L_1 $ and $ L_2 $ in \eqref{interp:1}. Utilizing the first and second properties in \eqref{chi:construct}, we have:
\[
\left|L_i(x)\right|\,\leq\,
\left|\sum_{k\in\Z^d}
\Tilde{L}_i\left(x+|\mathbb{T}|k\right)
\chi\left(\frac{1}{2}(x+|\mathbb{T}|k)\right)\right|,
\]
for all $x\in \R^d$, where $i\in\left\{1,2\right\}$. Due to the third property in \eqref{chi:construct}, only a finite number of terms in the sum on the right-hand side are non-zero. More specifically, we have:
\[
\left|L_i(x)\right|\,\leq\,
\left|\,\sum_{\substack{k\in\Z^d\\ |k|\leq 2}}
\Tilde{L}_i\left(x+|\mathbb{T}|k\right)\,
\right|,\quad \forall x \in \left[-\frac{|\mathbb{T}|}{2},\frac{|\mathbb{T}|}{2}\right]^d\,.
\]
By applying the triangle inequality in the previous estimate, we deduce:
\[
\left\|L_2\right\|_{L^2(\mathbb{T}^d)}\,\leq\,
C\,\|\Tilde{L}_2\|_{L^2(\R^d)}\,,
\quad\textrm{and}\quad
\left\|L_1\right\|_{L^\infty(\mathbb{T}^d)}\,\leq\,
C\,\|\Tilde{L}_1\|_{L^\infty(\R^d)}\,,
\]
for some constant depending only on the dimension $d$. We employ the same approach to estimate the $ L^\infty $-norm of $ \nabla_x L_1 $ and obtain:
\[
\left\|\nabla_x L_1\right\|_{L^\infty(\mathbb{T}^d)}\,\leq\,
C\,\|\Tilde{L}_1\|_{W^{1,\infty}(\R^d)}\,.
\]
We then utilize the two inequalities derived previously to estimate the right-hand side of \eqref{interp:1}, resulting in:
\[
K(t,L)\,\leq\,C\left(
\|\Tilde{L}_2\|_{L^{2}\left(\R^{d}\right)}\,
+
t\,\|\Tilde{L}_1\|_{W^{1,\infty}\left(\R^{d}\right)}\right)
\,,
\]
for some constant depending only on the dimension $d$. We take the infimum in the latter inequality over all pairs $\left(\Tilde{L}_1,\Tilde{L}_2\right)$ that satisfy \eqref{interp:0}, leading to:
\[
K(t,L)\,\leq\,C\,K(t,\Tilde{L})
\,,
\]
for all positive $t>0$. To conclude this step, we raise the inequality to the power $ \frac{2}{\theta} $, multiply by $ t^{-\frac{2(1-\theta)}{\theta} - 1} $, and integrate over all positive $ t > 0 $. This results in:
\begin{equation}\label{interp:Pi:R:theta}
\left\|L\right\|_{ \left(L^2\left(\mathbb{T}^d\right),W^{1,\infty}\left(\mathbb{T}^{d}\right)\right)_{1-\theta,\frac{2}{\theta}}}
\,\leq \,C
\,\|\Tilde{L}\|_{ \left(L^2\left(\R^d\right),W^{1,\infty}\left(\R^{d}\right)\right)_{1-\theta,\frac{2}{\theta}}}\,,
\end{equation}
for some constant $C$ depending only on the choice of $\chi$ and the dimension $d$.

Let us now derive the inverse inequality. For any pair $\left(L_1, L_2\right)$ that satisfies: 
\begin{equation}\label{interp:02}
L\,=\,L_1+L_2\,,\quad\textrm{and}\quad L_1 \in 
W^{1,\infty}\left(\mathbb{T}^d\right)
\,,\quad\textrm{and}\quad L_2 \in 
L^2\left(\mathbb{T}^d\right)\,,
\end{equation}
where we denote $\tilde{L}_i$,  with $i\in\left\{1,2\right\}$, the product between $\chi$ and the periodic extension of $L_i$ to $\R^d$. Thanks to the first and third properties in \eqref{chi:construct}, we can assert that:
\[
\|\Tilde{L}_2\|_{L^2(\R^d)}
\,\leq\,
C\,\left\|L_2\right\|_{L^2(\mathbb{T}^d)}\,,
\quad\textrm{and}\quad
\|\Tilde{L}_1\|_{W^{1,\infty}(\R^d)}
\,\leq\,
C\,
\left\|L_1\right\|_{W^{1,\infty}(\mathbb{T}^d)}\,,
\]
for some constant $C>0$ depending only on $\chi$. The latter inequality guarantees that:
\[
K(t,\Tilde{L})\,\leq\,C\left(
\left\|L_2\right\|_{L^{2}\left(\mathbb{T}^{d}\right)}
+
t
\left\|L_1\right\|_{W^{1,\infty}\left(\mathbb{T}^{d}\right)}\right)
\,,
\]
for all positive $t>0$. We now take the infimum of the latter inequality over all couples $(L_1, L_2)$ that satisfy \eqref{interp:02} and obtain:
\[
K(t,\Tilde{L})\,\leq\,C\,K(t,L)
\,.
\]
We conclude this step by raising the latter inequality to the power of $2/\theta$, multiplying by $t^{-2(1-\theta)/\theta - 1}$, and integrating over all positive $t > 0$. We obtain:
\begin{equation}\label{equi:Pi:R:interp}
\|\Tilde{L}\|_{ \left(L^2\left(\R^d\right),W^{1,\infty}\left(\R^{d}\right)\right)_{1-\theta,\frac{2}{\theta}}}
\,\leq \,C
\left\|L\right\|_{ \left(L^2\left(\mathbb{T}^d\right),W^{1,\infty}\left(\mathbb{T}^{d}\right)\right)_{1-\theta,\frac{2}{\theta}}}
,
\end{equation}
for some constant $C$ depending only on the choice of $\chi$ and the dimension $d$.

Next, we verify that there exists a constant $C$ that depends solely on the choice of $\chi$ such that
\begin{equation}\label{eq:norm}
\frac{1}{C}\,
\|\Tilde{L}\|_{W^{1-\theta,\frac{2}{\theta}}\left(\R^{d}\right)}
\,\leq\,
\left\|L\right\|_{W^{1-\theta,\frac{2}{\theta}}\left(\mathbb{T}^{d}\right)}
\,\leq \,C\,
\|\Tilde{L}\|_{W^{1-\theta,\frac{2}{\theta}}\left(\R^{d}\right)}\,.
\end{equation}

In the final step of this proof, we demonstrate that the $W^{1-\theta,2/\theta}$ and $\left(L^2,W^{1,\infty}\right)_{1-\theta,2/\theta}$ norms are equivalent on $\mathbb{T}^d$. On one hand, we utilize \eqref{eq:norm}, which guarantees that
\[
\left\|L\right\|_{W^{1-\theta,\frac{2}{\theta}}\left(\mathbb{T}^{d}\right)}
\,\leq \,C\,
\|\Tilde{L}\|_{W^{1-\theta,\frac{2}{\theta}}\left(\R^{d}\right)}\,.
\]
We then estimate the norm of $\Tilde{L}$ in the right-hand side using \eqref{curca:interp}, which guarantees that:
\[
\left\|L\right\|_{W^{1-\theta,\frac{2}{\theta}}\left(\mathbb{T}^{d}\right)}
\,\leq \,C\,
\|\Tilde{L}\|_{ \left(L^2\left(\R^d\right),W^{1,\infty}\left(\R^{d}\right)\right)_{1-\theta,\frac{2}{\theta}}}\,.
\]
We estimate the norm of $\Tilde{L}$ in the right-hand side by utilizing \eqref{equi:Pi:R:interp}, which gives us the following result: 
\begin{equation}\label{Interp:Pi:W:theta}
\left\|L\right\|_{W^{1-\theta,\frac{2}{\theta}}\left(\mathbb{T}^{d}\right)}
\,\leq \,C\,
\left\|L\right\|_{ \left(L^2\left(\mathbb{T}^d\right),W^{1,\infty}\left(\mathbb{T}^{d}\right)\right)_{1-\theta,\frac{2}{\theta}}}\,.
\end{equation}
We will now justify the inverse inequality. We start by applying \eqref{interp:Pi:R:theta}, which guarantees that:
\[
\left\|L\right\|_{ \left(L^2\left(\mathbb{T}^d\right),W^{1,\infty}\left(\mathbb{T}^{d}\right)\right)_{1-\theta,\frac{2}{\theta}}}
\,\leq \,C\,
\|\Tilde{L}\|_{ \left(L^2\left(\R^d\right),W^{1,\infty}\left(\R^{d}\right)\right)_{1-\theta,\frac{2}{\theta}}}\,.
\]
Then, we bound the norm of $\Tilde{L}$ in the previous right-hand side thanks to \eqref{curca:interp}, which ensures:
\[
\left\|L\right\|_{ \left(L^2\left(\mathbb{T}^d\right),W^{1,\infty}\left(\mathbb{T}^{d}\right)\right)_{1-\theta,\frac{2}{\theta}}}
\,\leq \,C\,
\|\Tilde{L}\|_{W^{1-\theta,\frac{2}{\theta}}\left(\R^{d}\right)}\,.
\]
We estimate the norm of $\Tilde{L}$ on the right-hand side using \eqref{eq:norm}, yielding: 
\begin{equation}\label{Interp:Pi:theta:W}
\left\|L\right\|_{ \left(L^2\left(\mathbb{T}^d\right),W^{1,\infty}\left(\mathbb{T}^{d}\right)\right)_{1-\theta,\frac{2}{\theta}}}
\,\leq \,C\,
\|L\|_{W^{1-\theta,\frac{2}{\theta}}\left(\mathbb{T}^{d}\right)}\,.
\end{equation}
Inequalities \eqref{Interp:Pi:W:theta} and \eqref{Interp:Pi:theta:W} together yield the desired result.
\end{proof}
\section{Well posedeness of the Liouville equation \eqref{Liouville}}\label{App:WP}
In this section, we establish the existence and uniqueness results for the Liouville equation \eqref{Liouville}, as stated in Theorems \ref{th:1} and \ref{th:2}. Subsequently, we provide a rigorous justification of the formal computations outlined in Sections \ref{sec:K:H-1} and \ref{sec:W:theta}, which are essential to deriving the uniform-in-$N$ estimates presented in Theorems \ref{th:1} and \ref{th:2}.
\subsection{Existence and uniqueness result}
Throughout this section, we emphasize that we work with a fixed $ N \geq 2 $ and do not aim to derive uniform estimates in $ N $. Our first goal is to demonstrate that, under the assumptions of either Theorem \ref{th:1} or Theorem \ref{th:2}, there exists a weak solution $ f_N \in L^\infty_{\text{loc}}\left(\mathbb{R}_+, L^2\left(\mathbb{T}^{dN}\right)\right) \cap L^2_{\text{loc}}\left(\mathbb{R}_+, H^1(\mathbb{T}^{dN})\right) $ to \eqref{Liouville}, in the sense of \eqref{Liouville:weak}. 
To construct such a solution, we employ a regularization procedure by introducing a sequence of mollified kernels $ \left(K_\varepsilon\right)_{\varepsilon > 0} $ defined as follows: 
\[
K^\eps\,=\,\rho_\eps\star K\,,
\]
for some regularizing sequence $\left(\rho_\eps\right)_{\eps>0}$ given by
\[
\rho^\eps(x)\,=\,\frac{1}{\eps^d}\rho\left(\frac{x}{\eps}\right)\,,\quad \forall x \in \T^d\,,
\]
where $\rho$ is a smooth positive function of $\R^d$ with integral $1$ over $\R^d$ and compact support within $\T^d$.
The key idea is to exploit the smoothing effects of the Laplace operator on the right-hand side of \eqref{Liouville} to establish $ L^2_{\text{loc}}\left(\mathbb{R}_+, H^1(\mathbb{T}^{dN})\right) $ estimates that are uniform in $ \varepsilon > 0 $ (though not in $ N \geq 1 $). This gain in $ H^1 $ regularity allows us to pass to the limit as $ \varepsilon \to 0 $. 
Next, we conclude this section by showing that, under the assumptions of either Theorem \ref{th:1} or Theorem \ref{th:2}, the solution $ f_N $ is unique in $ L^\infty_{\text{loc}}\left(\mathbb{R}_+, L^2\left(\mathbb{T}^{dN}\right)\right) \cap L^2_{\text{loc}}\left(\mathbb{R}_+, H^1(\mathbb{T}^{dN})\right) $. Here again, the regularity gain plays a pivotal role.
To unify the treatment of the frameworks of Theorems \ref{th:1} and \ref{th:2}, we adopt a set of weak assumptions. First, we assume that the initial data $ f_N^0 $ satisfies \eqref{hyp:f0} and that $ f_N^0 \in L^2\left(\mathbb{T}^{dN}\right) $. Second, we assume that $ K \in H^{-1}\left(\mathbb{T}^d\right) $, corresponding to the first condition in \eqref{hyp:K:sing}, and that $ K $ satisfies \eqref{K:2}. These assumptions ensure that $ K $ meets the requirements of Theorem \ref{th:1}, due to the Sobolev embedding $ W^{-\frac{2}{d+2}, d+2}\left(\mathbb{T}^d\right) \hookrightarrow H^{-1}\left(\mathbb{T}^d\right) $, as well as the divergence-free condition of Theorem \ref{th:2}, by setting $ K_- = K $ and $ K_+ = 0 $ in \eqref{K:2}.

We begin by proving the existence result. Given that $ f_N^0 \in L^2\left(\mathbb{T}^{dN}\right) $ and $ K^\varepsilon \in \mathcal{C}^\infty\left(\mathbb{T}^d\right) $ for all $ \varepsilon > 0 $, there exists a unique classical solution $ f^\varepsilon_N $ to the following mollified Liouville equation with the initial condition $ f_N^0 $
\begin{equation}\label{Liouville:eps}
	\partial_t f^\eps_N+\frac{1}{N} \sum_{\substack{i,j=1\\ i\neq j}}^N  \udiv_{x_i}\left(K^\eps\left(x_i-x_j\right) f^\eps_N\right)=\sigma \sum_{i=1}^N \Delta_{x_i}f^\eps_N\,.
\end{equation}
Since $ f^\varepsilon_N $ is sufficiently regular with respect to $ X^N \in \mathbb{T}^{dN} $ and $ t > 0 $, the following computations are valid in the strong sense. To estimate the $ L^2 $ norm of $ f^\varepsilon_N $, we multiply \eqref{Liouville:eps} by $ f^\varepsilon_N $, integrate over $ \mathbb{T}^{dN} $, and perform integration by parts with respect to $ x_i $, yielding: 
\begin{equation*}
	\frac{1}{2}\frac{\dD}{\dD t} \|f^\eps_N(t,\cdot)\|_{L^2(\mathbb{T}^{dN})}^2
	\,\leq \,
	\cK^\eps_- \,+\, \cK^\eps_+\,-\,
	\,\sigma\,\|\nabla_{X^N} f^\eps_N(t,\cdot)\|^2_{L^2(\mathbb{T}^{dN})}
	\,,
\end{equation*}
with $\cK^\eps_-$ and $\cK^\eps_+$ defined as: 
\[
\left\{
\begin{array}{ll}
	&\ds \cK^\eps_-\,=\,\frac{1}{N}\sum_{\substack{i,j=1\\ i\neq j}}^N\int_{\mathbb{T}^{dN}} K^\eps_-(x_i-x_{j})f^\eps_{N}\left(t,X^{N}\right)\nabla_{x_i} f^\eps_{N}(t,X^N)\,\dD X^N\,,\\[1.5em]
	&\ds \cK^\eps_+\,=\,\frac{1}{N}\sum_{\substack{i,j=1\\ i\neq j}}^N\int_{\mathbb{T}^{dN}} K^\eps_+(x_i-x_{j})f^\eps_{N}\left(t,X^{N}\right)\nabla_{x_i} f^\eps_{N}(t,X^N)\,\dD X^N \,,
\end{array}
\right.
\]
where $K^\eps_+$ and $K^\eps_-$ are given by 
 \[
 K^\eps_+\,=\,\rho_\eps\star K_+\,,\quad \textrm{and}\quad K^\eps_-\,=\,\rho_\eps\star K_-\,,
 \]
and $K_+$ and $K_-$ are given in \eqref{K:2}. We follow the same approach as in the proof of Lemma \ref{Lemma:non:hierarchy:term} to estimate $ \mathcal{K}^\varepsilon_+ $ and $ \mathcal{K}^\varepsilon_- $. Specifically, analogous computations to those used for estimating $ \mathcal{K}_- $ in the proof of Lemma \ref{Lemma:non:hierarchy:term} give: 
 \[
 \cK_-\,\leq\,
 \frac{N}{2}
 \left\|\left(\udiv_{x} K^\eps_-\right)_-\right\|_{L^\infty\left(\mathbb{T}^{d}\right)}
 \left\|f^\eps_{N}(t,\cdot)\right\|_{L^2\left(\mathbb{T}^{dN}\right)}^2\,.
 \]
 To estimate the $L^\infty$ norm in the latter inequality, we use assumption \eqref{K:-} on $K_-$ which guarantees the existence of a constant  $C>0$ such that:
 \[
 -\udiv_{x} \left(K_-\right)(y)\,\leq \, C \,,\quad \forall y \in \T^d\,.
 \]
 Then, we multiply the latter relation by $ \rho^\eps(x-y) $ and integrate with respect to $ y \in \T^d $. Noting that $ \rho^\eps $ is positive and has an integral equal to $ 1 $, this gives:  
 \[
 -\udiv_{x} \left(K^\eps_-\right)(x)\,\leq \, C \,,\quad \forall x \in \T^d\,,
 \]
 where $C$ is independent of $\eps$. Taking the supremum in $x$ and $\eps>0$ in the latter estimate, we deduce 
\[
\sup_{\eps>0} \left\|\left(\udiv_{x} K^\eps_-\right)_-\right\|_{L^\infty\left(\mathbb{T}^{d}\right)}\,<\,+\infty\,.
\]
Hence, there exists a constant $C$ independent of $\eps>0$ such that
\[
\cK_-\,\leq\,
C
\left\|f^\eps_{N}(t,\cdot)\right\|_{L^2\left(\mathbb{T}^{dN}\right)}^2\,.
\]
To estimate $ \cK_+^\eps $, we decompose the integral in the same way as in the proof of Lemma \ref{Lemma:non:hierarchy:term}, that is, 
\begin{equation*}
	\cK^\eps_+
	\,=\,
	\cR^\eps + \cS^\eps\,,
	\hspace{0.08cm}\textrm{with}\hspace{0.08cm}
	\left\{
	\begin{array}{ll}
		&\ds \cR^\eps\,=\,\frac{1}{N}\sum_{\substack{i,j=1\\ i\neq j}}^N \int_{\mathbb{T}^{dN}} R^\eps_\delta(x_i-x_{j})f^\eps_{N}\left(t,X^{N}\right)\nabla_{x_i} f^\eps_{N}(t,X^N)\,\dD X^N\\[1.5em]
		&\ds \cS^\eps\,=\,\frac{1}{N}\sum_{\substack{i,j=1\\ i\neq j}}^N\int_{\mathbb{T}^{dN}} S^\eps_\delta(x_i-x_{j})f^\eps_{N}\left(t,X^{N}\right)\nabla_{x_i} f^\eps_{N}(t,X^N)\,\dD X^N
	\end{array}
	\right.
\end{equation*}
where $R^\eps_\delta$ and $S^\eps_\delta$ are given by 
\[
R^\eps_\delta\,=\,\rho_\eps\star R_\delta\,,\quad \textrm{and}\quad S^\eps_\delta\,=\,\rho_\eps\star S_\delta\,,
\]
and where $\delta>0$, $R_\delta$ and $S_\delta$ are defined in \eqref{K:+:decomp}. To estimate $\cR^\eps$, we use the same computations as those used to justify \eqref{estim:R}, which yields
\begin{equation*}
	|\cR^\eps|
	\,\leq\,\frac{N}{2\,\delta}\left\|R^\eps_\delta\right\|^2_{L^{\infty}(\mathbb{T}^d)}\left\|f^\eps_{N}(t,\cdot)
	\right\|_{L^2(\mathbb{T}^{dN})}^{2}
	+
	\frac{\delta}{2}
	\left\|
	\nabla_{X^N}f^\eps_{N}(t,\cdot) 
	\right\|_{L^2(\mathbb{T}^{dN})}^{2}
	\,.
\end{equation*}
Next, we apply Young's inequality for convolutions to estimate the norm of $ R^\eps_\delta $, which gives:
\[
\left\|R^\eps_\delta\right\|_{L^{\infty}(\mathbb{T}^d)}
\,=\,
\left\|\rho^\eps \star R_\delta\right\|_{L^{\infty}(\mathbb{T}^d)}\leq
\left\|\rho^\eps\right\|_{L^{1}(\mathbb{T}^d)}
\left\|R_\delta\right\|_{L^{\infty}(\mathbb{T}^d)}
\leq
\left\|R_\delta\right\|_{L^{\infty}(\mathbb{T}^d)}
\,,
\]
where we used that $\rho^\eps$ is positive and has integral $1$ to obtain the last inequality. Hence, we deduce 
\begin{equation*}
	|\cR^\eps|
	\,\leq\,C_\delta\left\|f^\eps_{N}(t,\cdot)
	\right\|_{L^2(\mathbb{T}^{dN})}^{2}
	+
	\frac{\delta}{2}
	\left\|
	\nabla_{X^N}f^\eps_{N}(t,\cdot) 
	\right\|_{L^2(\mathbb{T}^{dN})}^{2}
	\,,
\end{equation*}
for some constant $C_\delta$ depending on $\delta$ but not on $\eps>0$.
To estimate $\cS^\eps$, we follow the same computations as those used to justify \eqref{estim:S}, which yields
\begin{equation*}
	|\cS^\eps|
	\,\leq\,C\left\|S_\delta^\eps\right\|_{L^{q}(\mathbb{T}^d)}
	\left(N
	\left\|f^\eps_{N}(t,\cdot)
	\right\|_{L^2(\mathbb{T}^{dN})}^2
	+
	\left\|
	\nabla_{X_N}f^\eps_{N}(t,\cdot)
	\right\|_{L^2(\mathbb{T}^{dN})}^2\right)
	\,.
\end{equation*}
Next, we use Young's  inequality for convolution to estimate the norm of $S^\eps_\delta$, that is
\[
\left\|S^\eps_\delta\right\|_{L^{q}(\mathbb{T}^d)}
\,=\,
\left\|\rho^\eps \star S_\delta\right\|_{L^{q}(\mathbb{T}^d)}\leq
\left\|\rho^\eps\right\|_{L^{1}(\mathbb{T}^d)}
\left\|S_\delta\right\|_{L^{q}(\mathbb{T}^d)}
\leq
\left\|S_\delta\right\|_{L^{q}(\mathbb{T}^d)}
\,.
\]
Since the norm of $S_\delta$ is less than $\delta$ according to \eqref{K:+:decomp}, we deduce that
\begin{equation*}
	|\cS^\eps|
	\,\leq\,C\delta\left(\left\|f^\eps_{N}(t,\cdot)
	\right\|_{L^2(\mathbb{T}^{dN})}^{2}
	+
	\left\|
	\nabla_{X^N}f^\eps_{N}(t,\cdot) 
	\right\|_{L^2(\mathbb{T}^{dN})}^{2}\right)
	\,,
\end{equation*}
for some constant $C$ independent of $\delta>0$ and $\eps>0$. Combining our estimates on $\cR^\eps$ and $\cS^\eps$, we deduce the following bound for $\cK_+^\eps$
\begin{equation*}
	|\cK^\eps_+|
	\,\leq\,C_\delta\left\|f^\eps_{N}(t,\cdot)
	\right\|_{L^2(\mathbb{T}^{dN})}^{2}
	+
	C\delta
	\left\|
	\nabla_{X^N}f^\eps_{N}(t,\cdot) 
	\right\|_{L^2(\mathbb{T}^{dN})}^{2}
	\,.
\end{equation*}
Gathering our estimates on $\cK^\eps_+$ and $\cK^\eps_-$, we obtain the following differential inequality for the $L^2$ norm of $f^\eps_N$
\begin{equation*}
	\frac{1}{2}\frac{\dD}{\dD t} \|f^\eps_N(t,\cdot)\|_{L^2(\mathbb{T}^{dN})}^2
	\,\leq \,
	C_\delta\left\|f^\eps_{N}(t,\cdot)
	\right\|_{L^2(\mathbb{T}^{dN})}^{2}
	+
	(C\delta-\sigma)
	\left\|
	\nabla_{X^N}f^\eps_{N}(t,\cdot) 
	\right\|_{L^2(\mathbb{T}^{dN})}^{2}
	\,.
\end{equation*}
We choose $\delta = \sigma/(2C)$ in the latter inequality and obtain 
\begin{equation*}
	\frac{1}{2}\frac{\dD}{\dD t} \|f^\eps_N(t,\cdot)\|_{L^2(\mathbb{T}^{dN})}^2
	\,\leq \,
	C\left\|f^\eps_N(t,\cdot)
	\right\|_{L^2(\mathbb{T}^{dN})}^{2}
-\frac{\sigma}{2}
	\left\|
	\nabla_{X^N}f^\eps_N(t,\cdot) 
	\right\|_{L^2(\mathbb{T}^{dN})}^{2}
	\,,
\end{equation*}
for some constant $C>0$ independent of $\eps$. We multiply the latter relation by $e^{-2Ct}$ and integrate from $0$ to $t$, obtaining
\begin{equation*}
		\|f_{N}^\eps(t,\cdot)\|_{L^2(\mathbb{T}^{dN})}^{2}
		+
		\sigma 
		\int_0^t
		\|\nabla_{X^N} f^\eps_N(s,\cdot)\|_{L^2(\mathbb{T}^{dN})}^{2}\,\dD s
		\,\leq\,e^{2Ct}
		\|f_{N}^0\|_{L^2(\mathbb{T}^{dN})}^{2} \,,
\end{equation*}
for some large enough constant $C>0$ independent of $\eps$. Since $f^0_N$ belongs to $L^2$ and does not depend on $\eps>0$,  the previous estimate can be rewritten as 
\begin{equation}\label{estim:final:eps}
	\|f_{N}^\eps(t,\cdot)\|_{L^2(\mathbb{T}^{dN})}^{2}
	+
	\sigma
	\int_0^t
	\|\nabla_{X^N} f^\eps_N(s,\cdot)\|_{L^2(\mathbb{T}^{dN})}^{2}\,\dD s
	\,\leq\,C\,e^{Ct} \,,
\end{equation}
for all time $t\geq 0$, all $\eps>0$, and for some large enough constant $C>0$ independent of $\eps$. According to \eqref{estim:final:eps}, the sequence of regularized solutions $\left(f^\eps_N,\nabla_{X^N}f^\eps_N\right)_{\eps>0}$ is uniformly bounded in $L_{loc}^{\infty}\left(\R_+,L^2\left(\T^{dN}\right)\right)\times L_{loc}^{2}\left(\R_+\times\T^{dN}\right)$. Hence, it converges up to subsequence in $L_{loc}^{\infty}\left(\R_+,L^2\left(\T^{dN}\right)\right)\times L_{loc}^{2}\left(\R_+\times\T^{dN}\right)-weak*$ to a limit $\left(f_N,\nabla_{X^N}f_N\right)\in L_{loc}^{\infty}\left(\R_+,L^2\left(\T^{dN}\right)\right)\times L_{loc}^{2}\left(\R_+\times\T^{dN}\right)$ as $\eps\rightarrow 0$.

To justify that $ f_N $ is a weak solution to the Liouville equation \eqref{Liouville}, we fix an $ \eps > 0 $ and a compactly supported test function $ \varphi \in \mathcal{C}^{\infty}\left(\mathbb{R}^+ \times \mathbb{T}^{dN}\right) $. We then multiply equation \eqref{Liouville:eps} by $ \varphi $ and integrate over the region $ [0,t] \times \mathbb{T}^{dN} $, for some $ t > 0 $. After performing integration by parts with respect to both $ t $ and $ X^N $, we obtain
\begin{equation}\label{Liouville:weak:eps}
	\begin{split}
		\int_{\T^{dN}}
		f^\eps_N& 	\varphi\,
		\dD X^N=
		\int_{\T^{dN}}
		f^0_N 	\varphi^0
		\dD X^N\\
		+&\int_{0}^t\int_{\T^{dN}}
		f^\eps_N 			\partial_t\varphi
		+
		\frac{1}{N} \sum_{\substack{i,j=1\\ i\neq j}}^N  K^\eps\left(x_i-x_j\right) f^\eps_N\nabla_{x_i}\varphi+
		\sigma \sum_{i=1}^N f^\eps_N \Delta_{x_i}\varphi \,\dD X^N\dD s\,,
	\end{split}
\end{equation}
where $\varphi^0(\cdot)=\varphi(0,\cdot)$. Let us now check that the interaction term $\cA$ defined as 
\[
\cA\,=\,
\frac{1}{N} \sum_{\substack{i,j=1\\ i\neq j}}^N \int_{0}^t\int_{\T^{dN}}
 K^\eps\left(x_i-x_j\right) f^\eps_N\nabla_{x_i}\varphi \,\dD X^N\dD s\,,
\]
converges as $\eps\rightarrow 0$. We fix some $\phi \in L^2\left(\T^d\right)^{d\times d}$ such that $K=\udiv_{x} \phi$ and replace $K^\eps$ in $\cA$:
\[
\cA\,=\,
\frac{1}{N} \sum_{\substack{i,j=1\\ i\neq j}}^N\int_{0}^t\int_{\T^{dN}}
  \udiv_{x_i} \left(\rho^\eps\star\phi\right)\left(x_i-x_j\right) f^\eps_N\nabla_{x_i}\varphi \,\dD X^N\dD s\,.
\]
Then, we integrate by parts with respect to $x_i$ in the latter relation, which yields
\[
\cA\,=\,\cA_1+\cA_2
\,,
\]
where $\cA_1$ and $\cA_2$ are defined as
	\begin{equation*}
		\left\{
		\begin{array}{ll}
			&\ds\cA_1\,=\,-\,	\frac{1}{N} \sum_{\substack{i,j=1\\ i\neq j}}^N\int_{0}^t\int_{\T^{dN}}
		  (\rho^\eps\star\phi\left(x_i-x_j\right) \nabla_{x_i}f^\eps_N)\cdot \nabla_{x_i}\varphi  \,\dD X^N\dD s\,,  \\[1.em]
			&\ds\cA_2\,=\,-\,\frac{1}{N} \sum_{\substack{i,j=1\\ i\neq j}}^N \int_{0}^t\int_{\T^{dN}}
			\operatorname{tr}( \rho^\eps\star\phi\left(x_i-x_j\right) \nabla^2_{x_i}\varphi)\, f^\eps_N\, \dD X^N\dD s\,,
		\end{array}
		\right.
\end{equation*}
where $\operatorname{tr}(A)$ denotes the trace of the matrix $A$. \\
On the one hand, we have 
\begin{equation*}
	\left\{
	\begin{array}{ll}
		&\ds\rho^\eps\star\phi\left(x_i-x_j\right)\,\underset{\eps\rightarrow0}{\longrightarrow}\, \phi\left(x_i-x_j\right)\,,\quad \textrm{in}\quad L^{2}_{loc}\left(\Theta\right)\,,  \\[1.em]
		&\ds\nabla_{x_i}f^\eps_N\,\underset{\eps\rightarrow0}{\longrightarrow}\, \nabla_{x_i}f_N\,,\quad \textrm{in}\quad L^{2}_{loc}\left(\Theta\right)-weak\,,
	\end{array}
	\right.
	\end{equation*}
where $\Theta$ is the support of $\varphi$ in $\R_+\times\T^{dN}$.
This implies that
	\[
	\cA_1\underset{\eps\rightarrow0}{\longrightarrow}\, 
	-\,\frac{1}{N} \sum_{\substack{i,j=1\\ i\neq j}}^N\int_{0}^t\int_{\T^{dN}}	  
	(\phi\left(x_i-x_j\right) \nabla_{x_i}f_N) \cdot \nabla_{x_i}\varphi  \,\dD X^N\dD s\,.
	\]
On the other hand, we have 
	\begin{equation*}
		\left\{
		\begin{array}{ll}
			&\ds\rho^\eps\star\phi\left(x_i-x_j\right)\,\underset{\eps\rightarrow0}{\longrightarrow}\, \phi\left(x_i-x_j\right)\,,\quad \textrm{in}\quad L^{2}\left(\Theta\right)\,,  \\[1.em]
			&\ds f^\eps_N\,\underset{\eps\rightarrow0}{\longrightarrow}\, f_N\,,\quad \textrm{in}\quad L^{2}\left(\Theta\right)-weak\,.
		\end{array}
		\right.
\end{equation*}
These convergences ensure
	\[
	\cA_2\,\underset{\eps\rightarrow0}{\longrightarrow}\, 
	-\,\frac{1}{N} \sum_{\substack{i,j=1\\ i\neq j}}^N\int_{0}^t\int_{\T^{dN}}
	  \operatorname{tr}(\phi\left(x_i-x_j\right) \nabla^2_{x_i}\varphi)\, f_N\, \dD X^N\dD s\,.
	\]

Based on the previous computations, we can take the limit as $ \eps \to 0 $ in the weak formulation \eqref{Liouville:weak:eps}, and deduce that $ f_N $ is a weak solution to the Liouville equation \eqref{Liouville}, that is,
		\begin{equation}\label{Liouville:weak}
	\begin{split}
		&\int_{\T^{dN}}
		f_N \varphi
		\dD X^N=
		\int_{\T^{dN}}
		f^0_N 	\varphi^0\,
		\dD X^N +\int_{0}^t\int_{\T^{dN}}
		f_N 			\partial_t\varphi \,\dD X^N\dD s\\
		&-\frac{1}{N} \sum_{\substack{i,j=1\\ i\neq j}}^N  \int_{0}^t\int_{\T^{dN}}
		(\phi\left(x_i-x_j\right)\nabla_{x_i}f_N)\cdot\nabla_{x_i}\varphi   + \operatorname{tr}\Big(\phi\left(x_i-x_j\right)\nabla^2_{x_i}\varphi\Big)\, f_N\dD X^N\dD s\\
		&+\sigma\sum_{i=1}^N \int_{0}^t\int_{\T^{dN}} f_N \Delta_{x_i}\varphi \,\dD X^N\dD s\,,
	\end{split}
\end{equation}
for all time $t>0$, all compactly supported test function $\varphi\in \scC^{\infty}\left(\R^+\times \T^{dN}\right)$ and all $\phi \in L^2\left(\T^d\right)^{d\times d}$ such that $K=\udiv_{x} \phi$.

To conclude, we address the problem of uniqueness of weak solutions to the Liouville equation \eqref{Liouville}. We consider two solutions 
\[
\left(f_N,g_N\right)\in L^{\infty}\left(\R_+,L^2\left(\T^{dN}\right)\right)\cap L^{2}\left(\R_+\times H^1\left(\T^{dN}\right)\right)
\]
to \eqref{Liouville:weak} with the same initial data $f_N^0$ and a smooth compactly supported vector field $\varphi\in \scC^{\infty}\left(\R^+ \times \T^{dN}\right)^{dN}$. Since $f_N$ and $g_N$ have the same initial data, the difference between the weak formulations \eqref{Liouville:weak} on $f_N$ and $g_N$ applied to $\udiv_{X^N} \varphi$ simplifies into
	\begin{equation*}
		\begin{split}
		\int_{\T^{dN}}
			(f_N-&g_N)\udiv_{X^N} \varphi\,
			\dD X^N= \int_{0}^t\int_{\T^{dN}}
			(f_N-g_N) 			\partial_t\udiv_{X^N} \varphi\,\dD X^N\dD s\\
			&-\frac{1}{N} \sum_{\substack{i\neq j}}^N \int_{0}^t\int_{\T^{dN}}
			 (\phi\left(x_i-x_j\right)\nabla_{x_i}(f_N-g_N))\cdot \nabla_{x_i}\udiv_{X^N} \varphi  dX^Nds \\
			&-\frac{1}{N} \sum_{\substack{i\neq j}}^N \int_{0}^t\int_{\T^{dN}}
			\operatorname{tr}(\phi\left(x_i-x_j\right)\nabla^2_{x_i}\udiv_{X^N} \varphi\, )(f_N-g_N)dX^Nds\\
			&+\sigma \sum_{i=1}^N\int_{0}^t\int_{\T^{dN}}
			 (f_N-g_N)\Delta_{x_i}\udiv_{X^N} \varphi \,\dD X^N\dD s\,.
		\end{split}
\end{equation*}
We apply the Cauchy-Schwarz inequality to estimate the right-hand side, which yields
	\begin{equation*}
		\begin{split}
			\int_{\T^{dN}}
			(f_N&-g_N)\udiv_{X^N} \varphi\,
			\dD X^N \leq \\
			&C_N\left\|\varphi\right\|_{W^{3,\infty}}(1+\left\|\phi\right\|_{L^{2}})\int_{0}^t
			\left\|(f_N-g_N)(s,\cdot)\right\|_{L^{2}}+ \left\|\nabla_{X^N}\left(f_N-g_N\right)(s,\cdot)\right\|_{L^{2}}\dD s.
		\end{split}
\end{equation*}
Since the difference $f_N-g_N$ has zero mean over $\T^{dN}$ for all time $s>0$, the Poincar\'e inequality  gives
\[
\left\|(f_N-g_N)(s,\cdot)\right\|_{L^{2}}\leq \left\|\nabla_{X^N}\left(f_N-g_N\right)(s,\cdot)\right\|_{L^{2}}\,.
\]
Hence, we obtain 
	\begin{equation*}
		\begin{split}
			\int_{\T^{dN}}
			\left(f_N-g_N\right)\udiv_{X^N} \varphi\,
			\dD X^N \leq 
			C_{N,\phi}\left\|\varphi\right\|_{W^{3,\infty}}\int_{0}^t
			\left\|\nabla_{X^N}\left(f_N-g_N\right)(s,\cdot)\right\|_{L^{2}}\dD s\,.
		\end{split}
\end{equation*}
Then, we integrate the latter inequality with respect to time, obtaining
	\begin{equation}\label{Gronw:WP}
		\begin{split}
			\int_{0}^T
			\int_{\T^{dN}}
			\left(f_N-g_N\right)&\udiv_{X^N} \varphi\,
			\dD X^N \,\dD t\,\leq \\
			&C_{N,\phi}\left\|\varphi\right\|_{W^{3,\infty}}\int_{0}^T\int_{0}^t
			\left\|\nabla_{X^N}\left(f_N-g_N\right)(s,\cdot)\right\|_{L^{2}}\dD s\,\dD t\,,
		\end{split}
\end{equation}
for all $T>0$. We lower bound the left hand side in \eqref{Gronw:WP} using that $\nabla_{X^N}(f_N-g_N)$ belongs to $L^{2}\left(\R_+\times \T^{dN}\right)^{dN}$ and the density of $\scC^{\infty}\left(\R^+ \times \T^{dN}\right)^{dN}$ in $L^{2}\left(\R_+\times \T^{dN}\right)^{dN}$. Indeed, this implies that there exists $\varphi$ such that 
\[
\frac{1}{2}
\int_{0}^T
\left\|\nabla_{X^N}\left(f_N-g_N\right)(t,\cdot)\right\|_{L^{2}}^2 \,\dD t
\,\leq \,
\int_{0}^T
\int_{\T^{dN}}
\left(f_N-g_N\right)\udiv_{X^N} \varphi\,
\dD X^N \,\dD t\,.
\]
We fix such $\varphi$ in \eqref{Gronw:WP} and obtain
\begin{equation*}
		\int_{0}^T
		\left\|\nabla_{X^N}\left(f_N-g_N\right)(t,\cdot)\right\|_{L^{2}}^2 \,\dD t \leq
		C_{N,\varphi,\phi}\int_{0}^T\int_{0}^t
		\left\|\nabla_{X^N}\left(f_N-g_N\right)(s,\cdot)\right\|_{L^{2}}\dD s\,\dD t\,.
\end{equation*}
Then, we lower bound the latter left hand side thanks to Jensen inequality, which yields
	\begin{equation}\label{Grow:2:WP}
		\begin{split}
		T^{-1}
		\Big(\int_{0}^T
		\left\|\nabla_{X^N}\left(f_N-g_N\right)(t,\cdot)\right\|_{L^{2}}&\dD t\Big)^2\\
		\leq
		&C_{N,\varphi,\phi}\int_{0}^T\int_{0}^t
		\left\|\nabla_{X^N}\left(f_N-g_N\right)(s,\cdot)\right\|_{L^{2}}\dD s\,\dD t\,.
	\end{split}
\end{equation}
Next, we suppose by contradiction that $f_N\,\neq\,g_N$. This implies that there exists $T>0$ and $\eta>0$ such that 
	\begin{equation}\label{contradiction}
		\eta<\int_{0}^T
		\left\|\nabla_{X^N}\left(f_N-g_N\right)(t,\cdot)\right\|_{L^{2}}\dD t\,.
\end{equation}
We multiply \eqref{Grow:2:WP} by $T/\eta$ and use the latter relation to deduce
	\begin{equation*}\int_{0}^T
		\left\|\nabla_{X^N}\left(f_N-g_N\right)(t,\cdot)\right\|_{L^{2}}\dD t
		\leq
		C_{N,\varphi,\phi,T,\eta}\int_{0}^T\int_{0}^t
		\left\|\nabla_{X^N}\left(f_N-g_N\right)(s,\cdot)\right\|_{L^{2}}\dD s\,\dD t\,.
\end{equation*}
We apply Gr{\"o}nwall Lemma in the latter inequality, which ensures
	\begin{equation*}\int_{0}^T
		\left\|\nabla_{X^N}\left(f_N-g_N\right)(t,\cdot)\right\|_{L^{2}}\dD t
		\,=\,0\,,
\end{equation*}
and therefore contradicts \eqref{contradiction}.

\subsection{Rigorous justification of the computations in Sections \ref{sec:K:H-1} and \ref{sec:W:theta}}
In this section, we rigorously justify the formal computations presented in Sections \ref{sec:K:H-1} and \ref{sec:W:theta}, thereby validating the estimates established in Theorems \ref{th:1} and \ref{th:2}.

We begin by assuming that $ K $ and $ \left(f^0_N\right)_{N \geq 1} $ satisfy the hypotheses of Theorem \ref{th:2}, and we proceed to justify the estimate \eqref{SmallnessH-1}. To this end, we consider the sequence of solutions $ \left(f^\eps_N\right)_{\eps > 0} $ to the mollified Liouville equation \eqref{Liouville:eps}, with initial data $ \left(f^0_N\right)_{N \geq 1} $ and mollified kernel $ K^\eps $. We verify that the regularized kernels $ K_\eps $ satisfy the assumption \eqref{hyp:K:sing} for each $ \eps > 0 $. Additionally, the regularity of the solution $ f^\eps_N $ with respect to both $ X^N \in \mathbb{T}^{dN} $ and $ t > 0 $ ensures that all the computations presented in Section \ref{sec:K:H-1} are rigorously justified for $ f^\eps_N $, and in particular, the following bound is verified:
\begin{equation*}
	\frac{\dD}{\dD t} \sum_{k=1}^N\frac{\|f^\eps_k(t,\cdot)\|_{L^2(\mathbb{T}^{dk})}^2}{R^{2k}}
	\,\leq \,
	\sum_{k=1}^{N} \left(\frac{\|K^\eps\|^2_{H^{-1}(\mathbb{T}^{d})}}{\sigma R^{2(k-1)}}-\frac{\sigma}{R^{2k}}\right)\|\nabla_{X^k} f^\eps_k(t,\cdot)\|^2_{L^2(\mathbb{T}^{dk})}
	\,,
\end{equation*}
for all time $t>0$, where $R$ is given in \eqref{SmallnessH-1}. To estimate the latter right-hand side, we decompose the sum as follows
\begin{equation*}
	\begin{split}
		\frac{\dD}{\dD t} \sum_{k=1}^N\frac{\|f^\eps_k(t,\cdot)\|_{L^2(\mathbb{T}^{dk})}^2}{R^{2k}}
		\,\leq \,
		&\sum_{k=1}^{N}\left(\|K^\eps\|_{H^{-1}(\mathbb{T}^{d})}^2 - \|K\|_{H^{-1}(\mathbb{T}^{d})}^2\right)
		\frac{\|\nabla_{X^k} f^\eps_k(t,\cdot)\|^2_{L^2(\mathbb{T}^{dk})}}{\sigma R^{2(k-1)}}\\ 
		&+\sum_{k=1}^{N} \left(\frac{ \|K\|_{H^{-1}(\mathbb{T}^{d})}^2}{\sigma R^{2(k-1)}}-\frac{\sigma}{R^{2k}}\right)\|\nabla_{X^k} f^\eps_k(t,\cdot)\|^2_{L^2(\mathbb{T}^{dk})}
		\,.
\end{split}
\end{equation*}
Under the assumption of Theorem \ref{th:2}, we have that $\sigma\geq \|K\|_{H^{-1}}^2R^2/\sigma$, which ensures that  the last sum in the latter estimate is non positive. Hence we obtain
	\begin{equation*}
		\frac{\dD}{\dD t} \sum_{k=1}^N\frac{\|f^\eps_k(t,\cdot)\|_{L^2(\mathbb{T}^{dk})}^2}{R^{2k}}
		\,\leq \left(\|K^\eps\|_{H^{-1}(\mathbb{T}^{d})}^2 - \|K\|_{H^{-1}(\mathbb{T}^{d})}^2\right)\sum_{k=1}^{N}
		\frac{\|\nabla_{X^k} f^\eps_k(t,\cdot)\|^2_{L^2(\mathbb{T}^{dk})}}{\sigma R^{2(k-1)}}
		\,.
\end{equation*}
We integrate the latter relation from $0$ to $t$, obtaining
	\begin{equation*}
	\begin{split}
		\sum_{k=1}^N\frac{\|f^\eps_k(t,\cdot)\|_{L^2(\mathbb{T}^{dk})}^2}{R^{2k}}
		\,\leq \,
		&\left(\|K^\eps\|_{H^{-1}(\mathbb{T}^{d})}^2 - \|K\|_{H^{-1}(\mathbb{T}^{d})}^2\right)\sum_{k=1}^{N}
		\int_0^t
		\frac{\|\nabla_{X^k} f^\eps_k(s,\cdot)\|^2_{L^2(\mathbb{T}^{dk})}}{\sigma R^{2(k-1)}}
		\,\dD s
		\\ 
		&+\sum_{k=1}^N\frac{\|f_{k}^0\|_{L^2(\mathbb{T}^{dk})}^2}{R^{2k}} \,.
\end{split}
\end{equation*}
Next, we bound the last sum thanks to assumption \eqref{SmallnessH-1}, which yields
	\begin{equation*}
		\sum_{k=1}^N\frac{\|f^\eps_k(t,\cdot)\|_{L^2(\mathbb{T}^{dk})}^2}{R^{2k}}
		\,\leq \,
		\left(\|K^\eps\|_{H^{-1}(\mathbb{T}^{d})}^2 - \|K\|_{H^{-1}(\mathbb{T}^{d})}^2\right)\sum_{k=1}^{N}
		\int_0^t
		\frac{\|\nabla_{X^k} f^\eps_k(s,\cdot)\|^2_{L^2(\mathbb{T}^{dk})}}{\sigma R^{2(k-1)}}
		\dD s+\,C^2\,,
\end{equation*}
for all time $t\geq 0$ and $\eps>0$, and where $C$ and $R$ are given in \eqref{SmallnessH-1}. On the one hand, we estimate the sum on the latter right hand side using estimate \eqref{estim:final:eps}, which ensures that the sequence of regularized gradients $\left(\nabla_{X^N}f^\eps_N\right)_{\eps>0}$ is uniformly bounded in $ L^{2}_{loc}\left(\R_+\times\T^{dN}\right)$. Hence, there exists a positive constant $C_N>0$ depending on $N$ and $t$ but not on $\eps>0$ such that 
	\begin{equation}\label{estim:final:eps2}
		\sum_{k=1}^N\frac{\|f^\eps_k(t,\cdot)\|_{L^2(\mathbb{T}^{dk})}^2}{R^{2k}}
		\,\leq \,
		C_N
		\left(\|K^\eps\|_{H^{-1}(\mathbb{T}^{d})}^2 - \|K\|_{H^{-1}(\mathbb{T}^{d})}^2\right)
		+\,C^2\,.
\end{equation}
On the other hand, we rewrite the error between $K$ and $K^\eps$ thanks to the following relation
\[
\|K^\eps\|_{H^{-1}}^2 - \|K\|_{H^{-1}(\mathbb{T}^{d})}^2
\,=\,
\left(\|K^\eps\|_{H^{-1}} - \|K\|_{H^{-1}(\mathbb{T}^{d})}\right)
\left(\|K^\eps\|_{H^{-1}} + \|K\|_{H^{-1}(\mathbb{T}^{d})}\right),
\]
and then apply the triangular inequality on the right hand side, which yields
\[
\|K^\eps\|_{H^{-1}}^2 - \|K\|_{H^{-1}(\mathbb{T}^{d})}^2
\,\leq\,
\|K^\eps - K\|_{H^{-1}(\mathbb{T}^{d})}
\left(\|K^\eps\|_{H^{-1}} + \|K\|_{H^{-1}(\mathbb{T}^{d})}\right).
\]
According to the definition of the $H^{-1}$ norm below \eqref{hyp:K:sing}, the latter right hand side is bounded by
\[
\|K^\eps\|_{H^{-1}}^2 - \|K\|_{H^{-1}(\mathbb{T}^{d})}^2
\,\leq\,
\left\|\rho_\eps\star\phi-\phi\right\|_{L^{2}}
\left(\left\|\rho_\eps\star\phi\right\|_{L^{2}} + \|K\|_{H^{-1}}\right),
\]
for all $\phi \in L^2\left(\T^d\right)^{d\times d}$ such that $K=\udiv_{x} \phi$. We fix $\phi \in L^2\left(\T^d\right)^{d\times d}$ and point out that $\left\|\phi-\rho_\eps\star\phi\right\|_{L^{2}}\rightarrow 0$ as $\eps\rightarrow 0$, which yields
\[
\limsup_{\eps\rightarrow 0}
\left(
\|K^\eps\|_{H^{-1}}^2 - \|K\|_{H^{-1}(\mathbb{T}^{d})}^2\right)\leq 0\,,
\]
taking the $\liminf$ in $\eps$ in \eqref{estim:final:eps2} and using the latter result, we deduce
	\begin{equation*}
		\liminf_{\eps\rightarrow 0}\;\sum_{k=1}^N\frac{\|f^\eps_k(t,\cdot)\|_{L^2(\mathbb{T}^{dk})}^2}{R^{2k}}
		\,\leq \,
		C^2\,.
\end{equation*}
Based on the previous section, the sequence of regularized solutions $\left(f^\eps_N,\nabla_{X^N}f^\eps_N\right)_{\eps>0}$ is uniformly bounded in $L^{\infty}\left(\R_+,L^2\left(\T^{dN}\right)\right)\times L^{2}\left(\R_+\times\T^{dN}\right)$ and it converges up to subsequence in $L^{\infty}\left(\R_+,L^2\left(\T^{dN}\right)\right)\times L^{2}\left(\R_+\times\T^{dN}\right)-weak*$ to the unique weak solution $\left(f_N,\nabla_{X^N}f_N\right)\in L^{\infty}\left(\R_+,L^2\left(\T^{dN}\right)\right)\times L^{2}\left(\R_+\times\T^{dN}\right)$ of \eqref{Liouville} as $\eps\rightarrow 0$. Therefore, we may pass to the limit $\eps \rightarrow 0$ in the latter estimate, that is,
\begin{equation*}
	\sum_{k=1}^N\frac{\|f_{k}(t,\cdot)\|_{L^2}^2}{R^{2k}}
	\,\leq\,C^2\,,
\end{equation*}
for all time $t\geq 0$, where $C$ is given in \eqref{SmallnessH-1}.  As in the proof of Theorem \ref{th:2}, we fix some $k$ between $1$ and $N$, lower bound the latter left hand side by $\|f_{k}(t,\cdot)\|_{L^2}^2/R^{2k}$,  and take the square root on both sides of the inequality, which justifies the estimate in Theorem \ref{th:2}, that is,
\begin{equation*}
	\frac{\|f_{k}(t,\cdot)\|_{L^2}}{R^{k}}
	\,\leq\,C\,, \quad \forall t\in \R^+\,,\quad \forall k \in \left\{1,\dots, N\right\}\,.
\end{equation*}

The key point in the argument above is the uniform estimate in $ \eps $ given by \eqref{estim:final:eps}, along with the fact that the constants depending on $ K $ that appear in the computations from Section \ref{sec:K:H-1} are continuous with respect to the limit $ \eps \to 0 $ in our regularization procedure. Consequently, the same method can be applied to justify the estimate \eqref{MarginalsBound} in Theorem \ref{th:1}, provided that the constants depending on $ K $ in the computations of Section \ref{sec:W:theta} are also continuous with respect to $ \eps \to 0 $. For brevity, we omit the detailed computations.

\bibliographystyle{abbrv}
\bibliography{refer}

\begin{thebibliography}{10}

\bibitem{aguirre2022predictive}
A.~Aguirre-Tamaral, M.~Camb{\'o}n, D.~Poyato, J.~Soler, and I.~Guerrero.
\newblock Predictive model for cytoneme guidance in hedgehog signaling based on
  ihog-glypicans interaction.
\newblock {\em Nature Communications}, 13(1):5647, 2022.

\bibitem{Andrieu2003}
C.~Andrieu, N.~de~Freitas, A.~Doucet, and M.~I. Jordan.
\newblock {An Introduction to MCMC for Machine Learning}.
\newblock {\em Machine Learning}, 50(1/2):5–43, 2003.

\bibitem{Aubin1976}
T.~Aubin.
\newblock Problèmes isopérimétriques et espaces de sobolev.
\newblock {\em Journal of Differential Geometry}, 11(4), 1976.

\bibitem{Bakry2013}
D.~Bakry, I.~Gentil, and M.~Ledoux.
\newblock {\em Analysis and geometry of Markov diffusion operators}.
\newblock Grundlehren der mathematischen Wissenschaften. Springer International
  Publishing, Basel, Switzerland, 2013.

\bibitem{Benyi_Oh13}
A.~B\'enyi and T.~Oh.
\newblock The {S}obolev inequality on the torus revisited.
\newblock {\em Publ. Math. Debrecen}, 83(3):359--374, 2013.

\bibitem{BerghLofstrom1976}
J.~Bergh and J.~L\"{o}fstr\"{o}m.
\newblock {\em Interpolation spaces. {A}n introduction}, volume No. 223 of {\em
  Grundlehren der Mathematischen Wissenschaften}.
\newblock Springer-Verlag, Berlin-New York, 1976.

\bibitem{BolleyGuillinFlorent2010}
F.~Bolley, A.~Guillin, and F.~Malrieu.
\newblock Trend to equilibrium and particle approximation for a weakly
  selfconsistent {V}lasov-{F}okker-{P}lanck equation.
\newblock {\em M2AN Math. Model. Numer. Anal.}, 44(5):867--884, 2010.

\bibitem{BreschJabinSoler2023}
D.~Bresch, P.-E. Jabin, and J.~Soler.
\newblock {A new approach to the mean-field limit of Vlasov-Fokker-Planck
  equations}, 2024.
\newblock {arXiv 2203.15747, to appear in Analysis \& PDE}.

\bibitem{BreschJabinWang2019a}
D.~Bresch, P.-E. Jabin, and Z.~Wang.
\newblock On mean-field limits and quantitative estimates with a large class of
  singular kernels: application to the {P}atlak-{K}eller-{S}egel model.
\newblock {\em Comptes Rendus Math\'ematique. Acad\'emie des Sciences. Paris},
  357(9):708--720, 2019.

\bibitem{BreschJabinWang2020}
D.~Bresch, P.-E. Jabin, and Z.~Wang.
\newblock Modulated free energy and mean field limit.
\newblock {\em Séminaire Laurent Schwartz — EDP et applications}, page
  1–22, 2020.

\bibitem{BreschJabinWang2023}
D.~Bresch, P.-E. Jabin, and Z.~Wang.
\newblock Mean field limit and quantitative estimates with singular attractive
  kernels.
\newblock {\em Duke Math. J.}, 172(13):2591--2641, 2023.

\bibitem{CagliotiLionsMarchioroPulvirenti1995}
E.~Caglioti, P.-L. Lions, C.~Marchioro, and M.~Pulvirenti.
\newblock A special class of stationary flows for two-dimensional {E}uler
  equations: a statistical mechanics description. {II}.
\newblock {\em Comm. Math. Phys.}, 174(2):229--260, 1995.

\bibitem{CarrilloDiFrancescoFigalli2011}
J.~A. Carrillo, M.~DiFrancesco, A.~Figalli, T.~Laurent, and D.~Slep\v~cev.
\newblock Global-in-time weak measure solutions and finite-time aggregation for
  nonlocal interaction equations.
\newblock {\em Duke Math. J.}, 156(2):229--271, 2011.

\bibitem{Carrillo2021}
J.~A. Carrillo, S.~Jin, L.~Li, and Y.~Zhu.
\newblock A consensus-based global optimization method for high dimensional
  machine learning problems.
\newblock {\em ESAIM Control Optim. Calc. Var.}, 27, 2021.

\bibitem{ChaintronDiez2022a}
L.-P. Chaintron and A.~Diez.
\newblock {Propagation of chaos: A review of models, methods and applications.
  I. Models and methods}.
\newblock {\em Kinetic and Related Models}, 15(6):895, 2022.

\bibitem{ChaintronDiez2022b}
L.-P. Chaintron and A.~Diez.
\newblock {Propagation of chaos: A review of models, methods and applications.
  II. Applications}.
\newblock {\em Kinetic and Related Models}, 15(6):1017, 2022.

\bibitem{Cohen2003}
A.~Cohen, W.~Dahmen, I.~Daubechies, and R.~DeVore.
\newblock Harmonic analysis of the space {BV}.
\newblock {\em Rev. Mat. Iberoamericana}, 19(1):235--263, 2003.

\bibitem{CotGooHou}
G.-H. Cottet, J.~Goodman, and T.~Y. Hou.
\newblock Convergence of the grid-free point vortex method for the
  three-dimensional {E}uler equations.
\newblock {\em SIAM J. Numer. Anal.}, 28.

\bibitem{Csiszar67}
I.~Csisz\'ar.
\newblock On topology properties of {$f$}-divergences.
\newblock {\em Studia Sci. Math. Hungar.}, 2:329--339, 1967.

\bibitem{Curca2024}
E.~Curc\u{a}.
\newblock On the interpolation of the spaces {$W^{l,1}(\Bbb{R}^d)$} and
  {$W^{r,\infty }(\Bbb{R}^d)$}.
\newblock {\em Rev. Mat. Iberoam.}, 40(3):931--986, 2024.

\bibitem{CourcelRosenzweigSerfaty2023log}
A.~C. de~Courcel, M.~Rosenzweig, and S.~Serfaty.
\newblock The attractive log gas: stability, uniqueness, and propagation of
  chaos, 2023.
\newblock arXiv 2311.14560.

\bibitem{CourcelRosenzweigSerfaty2023}
A.~C. de~Courcel, M.~Rosenzweig, and S.~Serfaty.
\newblock Sharp uniform-in-time mean-field convergence for singular periodic
  riesz flows, 2023.
\newblock arXiv 2304.05315.

\bibitem{DegondFrouvelleLiu2013}
P.~Degond, A.~Frouvelle, and J.-G. Liu.
\newblock Macroscopic limits and phase transition in a system of self-propelled
  particles.
\newblock {\em J. Nonlinear Sci.}, 23(3):427--456, 2013.

\bibitem{Dobrushin1979}
R.~L. Dobrushin.
\newblock Vlasov equations.
\newblock {\em Functional Analysis and Its Applications}, 13(2):115–123,
  1979.

\bibitem{Druet98}
O.~Druet.
\newblock Optimal {S}obolev inequalities of arbitrary order on compact
  {R}iemannian manifolds.
\newblock {\em J. Funct. Anal.}, 159(1):217--242, 1998.

\bibitem{Duerinckx2016}
M.~Duerinckx.
\newblock Mean-field limits for some {R}iesz interaction gradient flows.
\newblock {\em SIAM J. Math. Anal.}, 48(3):2269--2300, 2016.

\bibitem{Duer}
M.~Duerinckx.
\newblock {On the size of chaos via Glauber calculus in the classical
  mean-field dynamics}.
\newblock {\em Commun. Math. Phys.}, 382:613--653, 2021.

\bibitem{FHM-JEMS}
N.~Fournier, M.~Hauray, , and S.~Mischler.
\newblock Propagation of chaos for the 2d viscous vortex model.
\newblock {\em J. Eur. Math. Soc.}, 16(7):1425--1466, 2014.

\bibitem{FournierHaurayMischler2014}
N.~Fournier, M.~Hauray, and S.~Mischler.
\newblock Propagation of chaos for the 2{D} viscous vortex model.
\newblock {\em J. Eur. Math. Soc. (JEMS)}, 16(7):1423--1466, 2014.

\bibitem{FournierJourdain}
N.~Fournier and B.~Jourdain.
\newblock Stochastic particle approximation of the {K}eller-{S}egel equation
  and two-dimensional generalization of {B}essel processes.
\newblock {\em Ann. Appl. Probab.}, 27(5):2807--2861, 2017.

\bibitem{GodinhovQuininao2015}
D.~Godinho and C.~Qui\~ninao.
\newblock Propagation of chaos for a subcritical {K}eller-{S}egel model.
\newblock {\em Ann. Inst. Henri Poincar\'e{} Probab. Stat.}, 51(3):965--992,
  2015.

\bibitem{Golse2016}
F.~Golse.
\newblock On the dynamics of large particle systems in the mean field limit.
\newblock In {\em Macroscopic and Large Scale Phenomena: Coarse Graining, Mean
  Field Limits and Ergodicity}, pages 1--144. Springer International
  Publishing, 2016.

\bibitem{Goodman91}
J.~Goodman and T.~Hou.
\newblock New stability estimates for the {$2$}-{D} vortex method.
\newblock {\em Comm. Pure Appl. Math.}, 44:1015--1031, 1991.

\bibitem{GooHouLow90}
J.~Goodman, T.~Y. Hou, and J.~Lowengrub.
\newblock Convergence of the point vortex method for the {$2$}-{D} {E}uler
  equations.
\newblock {\em Comm. Pure Appl. Math.}, 43(3):415--430, 1990.

\bibitem{GrassiPareschi2021}
S.~Grassi and L.~Pareschi.
\newblock From particle swarm optimization to consensus based optimization:
  stochastic modeling and mean-field limit.
\newblock {\em Math. Models Methods Appl. Sci.}, 31(8):1625--1657, 2021.

\bibitem{Guillin2024}
A.~Guillin, P.~Le~Bris, and P.~Monmarché.
\newblock Uniform in time propagation of chaos for the 2d vortex model and
  other singular stochastic systems.
\newblock {\em Journal of the European Mathematical Society}, 2024.

\bibitem{Ha_Shim_Zhang20}
S.-Y. Ha, W.~Shim, and Y.~Zhang.
\newblock A diffusion limit for the parabolic {K}uramoto-{S}akaguchi equation
  with inertia.
\newblock {\em SIAM J. Math. Anal.}, 52(2):1591--1638, 2020.

\bibitem{Han2023}
Y.~Han.
\newblock Entropic propagation of chaos for mean field diffusion with $l^p$
  interactions via hierarchy, linear growth and fractional noise, 2023.
\newblock arXiv 2205.02772.

\bibitem{Hau09}
M.~Hauray.
\newblock Wasserstein distances for vortices approximation of {E}uler-type
  equations.
\newblock {\em Math. Models Methods Appl. Sci.}, 19(8):1357--1384, 2009.

\bibitem{Hauray09}
M.~Hauray.
\newblock Wasserstein distances for vortices approximation of {E}uler-type
  equations.
\newblock {\em Math. Models Methods Appl. Sci.}, 19(8):1357--1384, 2009.

\bibitem{Hauray_Jabin15}
M.~Hauray and P.-E. Jabin.
\newblock Particle approximation of {V}lasov equations with singular forces:
  propagation of chaos.
\newblock {\em Ann. Sci. \'Ec. Norm. Sup\'er. (4)}, 48(4):891--940, 2015.

\bibitem{Hauray2014}
M.~Hauray and S.~Mischler.
\newblock On kac’s chaos and related problems.
\newblock {\em Journal of Functional Analysis}, 266(10):6055–6157, 2014.

\bibitem{Hebey99}
E.~Hebey.
\newblock {\em Nonlinear analysis on manifolds: {S}obolev spaces and
  inequalities}, volume~5 of {\em Courant Lecture Notes in Mathematics}.
\newblock New York University, Courant Institute of Mathematical Sciences, New
  York; American Mathematical Society, Providence, RI, 1999.

\bibitem{Hebey_Vaugon95}
E.~Hebey and M.~Vaugon.
\newblock The best constant problem in the {S}obolev embedding theorem for
  complete {R}iemannian manifolds.
\newblock {\em Duke Math. J.}, 79(1):235--279, 1995.

\bibitem{ChildsRowan2024}
E.~Hess-Childs and K.~Rowan.
\newblock Higher-order propagation of chaos in {$L^2$} for interacting
  diffusions, 2024.
\newblock arXiv 2310.09654.

\bibitem{HouLowShe93}
T.~Y. Hou, J.~Lowengrub, and M.~J. Shelley.
\newblock The convergence of an exact desingularization for vortex methods.
\newblock {\em SIAM J. Sci. Comput.}, 14(1):1--18, 1993.

\bibitem{Jabin2014}
P.-E. Jabin.
\newblock A review of the mean field limits for vlasov equations.
\newblock {\em Kinetic Related Models}, 7(4):661–711, 2014.

\bibitem{JabinWang2018}
P.-E. Jabin and Z.~Wang.
\newblock Quantitative estimates of propagation of chaos for stochastic systems
  with {$W^{-1, \infty }$} kernels.
\newblock {\em Inventiones mathematicae}, 214(1):523–591, 2018.

\bibitem{Kac1956}
M.~Kac.
\newblock Foundations of kinetic theory.
\newblock In {\em Proceedings of the {T}hird {B}erkeley {S}ymposium on
  {M}athematical {S}tatistics and {P}robability, 1954--1955, vol. {III}}, pages
  171--197. Univ. California Press, Berkeley-Los Angeles, Calif., 1956.

\bibitem{Krause2000}
U.~Krause.
\newblock A discrete nonlinear and non-autonomous model of consensus formation.
\newblock In {\em Communications in difference equations ({P}oznan, 1998)},
  pages 227--236. Gordon and Breach, Amsterdam, 2000.

\bibitem{Kullback67}
S.~Kullback.
\newblock A lower bound for discrimination information in terms of variation
  (corresp.).
\newblock {\em IEEE Transactions on Information Theory}, 13(1):126--127, 1967.

\bibitem{Lacker}
D.~Lacker.
\newblock Hierarchies, entropy and quantitative propagation of chaos for mean
  field diffusions.
\newblock {\em Probab. Math. Phys.}, 4(2):377432, 2023.

\bibitem{LackerLeFlem2023}
D.~Lacker and L.~Le~Flem.
\newblock Sharp uniform-in-time propagation of chaos.
\newblock {\em Probab. Theory Related Fields}, 187(1-2):443--480, 2023.

\bibitem{Qin_Li_Yunan23}
Q.~Li, L.~Wang, and Y.~Yang.
\newblock Monte {C}arlo gradient in optimization constrained by radiative
  transport equation.
\newblock {\em SIAM J. Numer. Anal.}, 61(6):2744--2774, 2023.

\bibitem{McKean1967}
H.~J. McKean.
\newblock Propagation of chaos for a class of non-linear parabolic equations.
\newblock {\em Stochastic Differential Equations, Lecture Series in
  Differential Equations Session 7}, Catholic Univ, 1967.

\bibitem{Pouradier2019}
S.~McQuade, B.~Piccoli, and N.~Pouradier~Duteil.
\newblock Social dynamics models with time-varying influence.
\newblock {\em Mathematical Models and Methods in Applied Sciences},
  29(04):681--716, 2019.

\bibitem{Nguyen2018}
S.~Mei, A.~Montanari, and P.-M. Nguyen.
\newblock A mean field view of the landscape of two-layer neural networks.
\newblock {\em Proc. Natl. Acad. Sci. USA}, 115(33), 2018.

\bibitem{Neunzert1974}
H.~Neunzert and J.~Wick.
\newblock Die {A}pproximation der {L}\"osung von
  {I}ntegro-{D}ifferentialgleichungen durch endliche {P}unktmengen.
\newblock In {\em Numerische {B}ehandlung nichtlinearer {I}ntegrodifferential-
  und {D}ifferentialgleichungen ({T}agung, {M}ath. {F}orschungsinst.,
  {O}berwolfach, 1973)}, volume Vol. 395 of {\em Lecture Notes in Math.}, pages
  275--290. Springer, Berlin-New York, 1974.

\bibitem{NguyenRosenzweifSerfaty2022}
Q.-H. Nguyen, M.~Rosenzweig, and S.~Serfaty.
\newblock Mean-field limits of {R}iesz-type singular flows.
\newblock {\em Ars Inven. Anal.}, pages Paper No. 4, 45, 2022.

\bibitem{Osada}
H.~Osada.
\newblock Propagation of chaos for the two-dimensional {N}avier-{S}tokes
  equation.
\newblock {\em Probabilistic methods in mathematical physics ({K}atata/{K}yoto,
  1985)}, pages 303--334,.

\bibitem{Pinnau2017}
R.~Pinnau, C.~Totzeck, O.~Tse, and S.~Martin.
\newblock A consensus-based model for global optimization and its mean-field
  limit.
\newblock {\em Mathematical Models and Methods in Applied Sciences},
  27(01):183--204, 2017.

\bibitem{RosenzweigSerfaty2021}
M.~Rosenzweig and S.~Serfaty.
\newblock Global-in-time mean-field convergence for singular riesz-type
  diffusive flows.
\newblock {\em The Annals of Applied Probability}, 2021.

\bibitem{Rotskoff_Vanden22}
G.~M. Rotskoff and E.~Vanden-Eijnden.
\newblock Trainability and accuracy of artificial neural networks: an
  interacting particle system approach.
\newblock {\em Comm. Pure Appl. Math.}, 75(9):1889--1935, 2022.

\bibitem{Scho95}
S.~Schochet.
\newblock The weak vorticity formulation of the {$2$}-{D} {E}uler equations and
  concentration-cancellation.
\newblock {\em Comm. Partial Differential Equations}, 20(5-6):1077--1104, 1995.

\bibitem{Scho96}
S.~Schochet.
\newblock The point-vortex method for periodic weak solutions of the 2-{D}
  {E}uler equations.
\newblock {\em Comm. Pure Appl. Math.}, 49(9):911--965, 1996.

\bibitem{Serfaty2020}
S.~Serfaty.
\newblock Mean field limit for {C}oulomb-type flows.
\newblock {\em Duke Math. J.}, 169(15):2887--2935, 2020.
\newblock With an appendix by Mitia Duerinckx and Serfaty.

\bibitem{Spohn2004}
H.~Spohn.
\newblock {\em Dynamics of Charged Particles and their Radiation Field}.
\newblock Cambridge University Press, 2004.

\bibitem{Sznitman1991}
A.-S. Sznitman.
\newblock Topics in propagation of chaos.
\newblock In {\em \'Ecole d'\'Et\'e{} de {P}robabilit\'es de {S}aint-{F}lour
  {XIX}---1989}, volume 1464 of {\em Lecture Notes in Math.}, pages 165--251.
  Springer, Berlin, 1991.

\bibitem{Talenti1976}
G.~Talenti.
\newblock Best constant in sobolev inequality.
\newblock {\em Annali di Matematica Pura ed Applicata}, 110(1):353–372, Dec.
  1976.

\bibitem{Triebel83}
H.~Triebel.
\newblock {\em Theory of function spaces}, volume~78 of {\em Monographs in
  Mathematics}.
\newblock Birkh\"auser Verlag, Basel, 1983.

\bibitem{Triebel92}
H.~Triebel.
\newblock {\em Theory of function spaces. {II}}, volume~84 of {\em Monographs
  in Mathematics}.
\newblock Birkh\"auser Verlag, Basel, 1992.

\bibitem{Xia2011}
H.~Xia, H.~Wang, and Z.~Xuan.
\newblock Opinion dynamics: A multidisciplinary review and perspective on
  future research.
\newblock {\em Int. J. Knowl. Syst. Sci.}, 2(4):72–91, 2011.

\end{thebibliography}

\end{document}